\begin{document}
\baselineskip 17.5pt
\hfuzz=6pt

\newtheorem{theorem}{Theorem}[section]
\newtheorem{prop}[theorem]{Proposition}
\newtheorem{lemma}[theorem]{Lemma}
\newtheorem{definition}[theorem]{Definition}
\newtheorem{cor}[theorem]{Corollary}
\newtheorem{example}[theorem]{Example}
\newtheorem{remark}[theorem]{Remark}
\newcommand{\ra}{\rightarrow}
\renewcommand{\theequation}
{\thesection.\arabic{equation}}
\newcommand{\ccc}{{\mathcal C}}
\newcommand{\one}{1\hspace{-4.5pt}1}

\def\RR{\mathbb R}

\newcommand{\unit}{\mathbbm 1} 

\def\gfz{\genfrac{}{}{0pt}{}}

\newcommand{\al}{\alpha}
\newcommand{\be}{\beta}
\newcommand{\ga}{\gamma}
\newcommand{\Ga}{\Gamma}
\newcommand{\de}{\delta}
\newcommand{\ben}{\beta_n}
\newcommand{\De}{\Delta}
\newcommand{\ve}{\varepsilon}
\newcommand{\ze}{\zeta}
\newcommand{\Th}{\chi}
\newcommand{\ka}{\kappa}
\newcommand{\la}{\lambda}
\newcommand{\laj}{\lambda_j}
\newcommand{\lak}{\lambda_k}
\newcommand{\La}{\Lambda}
\newcommand{\si}{\sigma}
\newcommand{\Si}{\Sigma}
\newcommand{\vp}{\varphi}
\newcommand{\om}{\omega}
\newcommand{\Om}{\Omega}

\newcommand{\ca}{\mathcal A}
\newcommand{\cs}{\mathcal S}
\newcommand{\csrn}{\mathcal S (\rn)}
\newcommand{\cm}{\mathcal M}
\newcommand{\cb}{\mathcal B}
\newcommand{\ce}{\mathcal E}
\newcommand{\cd}{\mathcal D}
\newcommand{\cdp}{D'}
\newcommand{\csp}{\mathcal S'}

\def\R{{\mathbb R}}
\def\ls{\lesssim}
\def\ep{\epsilon}

\newcommand{\PP}{{\mathop P \limits^{    \circ}}_{-}}
\newcommand{\lab}{\label}
\newcommand{\med}{\textup{med}}
\newcommand{\pv}{\textup{p.v.}\,}
\newcommand{\loc}{\textup{loc}}
\newcommand{\intl}{\int\limits}
\newcommand{\intlrn}{\int\limits_{\rn}}
\newcommand{\intrn}{\int_{\rn}}
\newcommand{\iintl}{\iint\limits}
\newcommand{\dint}{\displaystyle\int}
\newcommand{\diint}{\displaystyle\iint}
\newcommand{\dintl}{\displaystyle\intl}
\newcommand{\diintl}{\displaystyle\iintl}
\newcommand{\liml}{\lim\limits}
\newcommand{\suml}{\sum\limits}
\newcommand{\supl}{\sup\limits}
\newcommand{\f}{\frac}
\newcommand{\df}{\displaystyle\frac}
\newcommand{\p}{\partial}
\newcommand{\Ar}{\textup{Arg}}
\newcommand{\abssig}{\widehat{|\si_0|}}
\newcommand{\abssigk}{\widehat{|\si_k|}}
\newcommand{\tT}{\tilde{T}}
\newcommand{\tV}{\tilde{V}}
\newcommand{\wt}{\widetilde}
\newcommand{\wh}{\widehat}
\newcommand{\lp}{L^{p}}
\newcommand{\nf}{\infty}
\newcommand{\rrr}{\mathbf R}
\newcommand{\li}{L^\infty}
\newcommand{\rn}{\mathbf R^n}
\newcommand{\tf}{\tfrac}
\newcommand{\zzz}{\mathbf Z}

\newcommand{\MM}{\mathcal{M}}

\newcommand{\contain}[1]{\left( #1 \right)}
\newcommand{\ContainA}[1]{( #1 )}
\newcommand{\ContainB}[1]{\bigl( #1 \bigr)}
\newcommand{\ContainC}[1]{\Bigl( #1 \Bigr)}
\newcommand{\ContainD}[1]{\biggl( #1 \biggr)}
\newcommand{\ContainE}[1]{\Biggl( #1 \Biggr)}

\allowdisplaybreaks

\title[Two weight Commutators]{Two weight Commutators in the Dirichlet and Neumann Laplacian settings}
\thanks{{\it {\rm 2010} Mathematics Subject Classification:} Primary: 42B35, 42B25.}
\thanks{{\it Key words:}
 Dirichlet and Neumann Laplacians,
 commutator, Riesz transform, two weight}

\author{Xuan Thinh Duong}
\address{Xuan Thinh Duong, Department of Mathematics, Macquarie University, Sydney}
\email{xuan.duong@mq.edu.au}

\author{Irina Holmes}
\address{Irina Holmes, Department of Mathematics, Washington University - St. Louis\\
         St. Louis, MO 63130-4899 USA}
\email{irina.holmes@email.wustl.edu}

\author{Ji Li}
\address{Ji Li, Department of Mathematics, Macquarie University, Sydney}
\email{ji.li@mq.edu.au}

\author{Brett D. Wick}
\address{Brett D. Wick, Department of Mathematics\\
         Washington University - St. Louis\\
         St. Louis, MO 63130-4899 USA
         }
\email{wick@math.wustl.edu}

\author{Dongyong Yang}
\address{Dongyong Yang, School of Mathematical Sciences\\
         Xiamen University\\
         Xiamen, China
         }
\email{dyyang@xmu.edu.cn}

\begin{abstract}
In this paper we establish the characterization of the weighted BMO via two weight commutators in the settings of the Neumann Laplacian $\Delta_{N_+}$ on the upper half space $\mathbb{R}^n_+$ and  the reflection Neumann Laplacian $\Delta_N$ on $\mathbb{R}^n$ with respect to the weights associated to $\Delta_{N_+}$ and $\Delta_{N}$ respectively. This in turn yields a weak factorization for the corresponding weighted Hardy spaces, where in particular, the weighted class associated to  $\Delta_{N}$ is strictly larger than the Muckenhoupt weighted class and contains non-doubling weights.    In our study, we also make contributions to  the classical Muckenhoupt--Wheeden weighted Hardy space (BMO space respectively) by showing that it can be characterized via area function (Carleson measure respectively)  involving the semigroup generated by the Laplacian on $\mathbb{R}^n$ and that the duality of these weighted Hardy and BMO spaces holds for Muckenhoupt $A^p$ weights with $p\in (1,2]$ while the previously known related results cover only $p\in (1,{n+1\over n}]$. We also point out that this two weight commutator theorem might not be true in the setting of general operators $L$, and in particular we show that it is not true when $L$ is the Dirichlet Laplacian  $\Delta_{D_+}$ on $\mathbb{R}^n_+$.
\end{abstract}

\maketitle



\section{Introduction and Statement of Main Results}
\setcounter{equation}{0}

The theory of Hardy and BMO spaces has been developed successfully as an important part of modern harmonic analysis in
the last 50 years. Hardy spaces $H^1$ and BMO spaces have played a central role and are known as substitutes of $L^1$ and $L^{\infty}$ spaces, respectively, in
the Calder\'on--Zygmund theory of singular integrals. Practical applications gave rise to the necessity of putting suitable weights on
function spaces and the Muckenhoupt $A^p$ classes have been the standard class of weights for singular integrals in the
Calder\'on-Zygmund classes. Combination of the three concepts, singular integrals, function spaces and weights, forms a
central part of the Calder\'on--Zygmund theory and interest in this topic has been extensive.

\subsection{Background and Main Results}

A recent notable result concerning the two weight problem which also gives a characterisation of weighted BMO spaces was achieved in \cite{HLW}.
More specifically, let $R_j = \frac{\partial}{\partial x_j} \Delta^{-1/2}$ be the $j$-th Riesz transform on the Euclidean space $\mathbb R^n$,
$1 < p < \infty$, the weights $\mu, \lambda$ in the Muckenhoupt class $A^p$ and the weight $\nu = \mu^{1/p} \lambda^{-1/p}$. Let $L^p_{w}(\R^n)$ denote the space of functions that are $p$ integrable relative to the measure $w(x)dx$.  Then (\cite{HLW}*{Theorem 1.2}) there
exist constants $0 < c < C < \infty$, depending only on $n, p, \mu, \lambda$, such that
\begin{equation}\label{HLW}
c\| b \| _{{\rm BMO}_\nu(\R^n)} \le \sum _{i=1}^{n} \| [b, R_i] : L^p_{\mu}(\R^n) \rightarrow L^p_{\lambda}(\R^n) \|  \le C \| b \| _{{\rm BMO}_\nu(\R^n)}
\end{equation}
in which $[b, R_i] (f)(x) = b(x) R_i (f)(x) - R_i (bf)(x)$ denotes the commutator of the Riesz transform $R_i$ and the function $b \in {\rm BMO}_\nu(\R^n)$, i.e., the Muckenhoupt--Wheeden weighted BMO space (introduced in \cite{MW76}, see also the definition in Section 1.2 below).  This result provided a characterization of the boundedness of the commutators $[b, R_i]:L^p_{\mu}(\R^n) \rightarrow L^p_{\lambda}(\R) $ in terms of a triple of information $b,\mu$ and $\lambda$.  This result extended important work of Bloom in \cite{B} to handle Riesz transforms and more general Calder\'on-Zygmund operators.  It was additionally inspired by the foundational work of Coifman, Rochberg and Weiss \cite{CRW}, where they characterized the boundedness of the commutators $[b,R_i]$ acting on unweighted Lebesgue spaces in terms of BMO; extending the work of Nehari \cite{Ne} about Hankel operators to higher dimensions.


The theory of the classical harmonic analysis, including the Riesz transforms, $A^p$ weights, BMO and commutators,  is intimately connected to the Laplacian $\Delta$; changing the Laplacian $\Delta$ to other differential operators $L$ introduces new challenges and directions to explore.  Several natural questions arise from \eqref{HLW}, in which the Laplacian plays an essential role.
\begin{itemize}
\item  Question 1: Can we establish \eqref{HLW} for Riesz transforms $\nabla L^{-\frac{1}{2}}$ associated to operators $L$ other than the Laplacian on $\mathbb R^n$?

\item Question 2: What type of weighted BMO spaces are suitable for the estimate \eqref{HLW} for Riesz transforms $\nabla L^{-\frac{1}{2}}$? 

\item Question 3: Is there a new type of $A^p$ weight associated to the operator $L$ and can we obtain \eqref{HLW} for the weights $\mu, \lambda$ in this new weighted class?
\end{itemize}

The main aim of this paper is to address these questions with the cases of $L$ as the Dirichlet Laplacian and the Neumann Laplacian on half spaces.  The Dirichlet Laplacian $\Delta_{D_+}$ and the Neumann Laplacian $\Delta_{N_+}$ on half spaces $\mathbb R^n_+ = \mathbb R^{n-1} \times (0,\infty)$ serve as prototypes of differential operators with boundary value problems, see for example \cite{S}*{Section 3}.  The operators are amenable to a deeper analysis and will allow us to resolve these questions in a satisfactory, and very interesting and surprising way.

We now state our main results, while precise definitions of differential operators and function spaces 
will be given in the related sections.  

We begin with the study of the commutator in the setting of the Neumann Laplacian $\Delta_{N_+}$ with the $i$-th Riesz transform $R_{N_+,i}:={\partial\over\partial x_i} \Delta_{N_+}^{-{1\over2}}$, which provides a positive answer to \eqref{HLW} with $\nu\in A^2(\R^n_+)$.  We now denote by ${\rm BMO}_{\Delta_{N_+},\nu}(\R^n_+)$ the  weighted BMO space  associated with $\Delta_{N_+}$ on $\R^n_+$.
Then we have
\begin{theorem}\label{t:upp-low Neum Lap ha-sp commu}
Suppose $1<p<\infty$, $\mu,\lambda \in  A^p(\R^n_+)$ and $\nu = \mu^{\frac{1}{p}} \lambda^{-\frac{1}{p}}$. Then there are constants $0<c<C<\infty$, depending only on $n,p,\mu,\lambda$ such that for $i=1,\ldots,n$,
$$c\|b\|_{ {\rm BMO}_{\Delta_{N_+},\nu}({\mathbb R}^n_+)} \leq \big\| [b,R_{N_+,i}] : L^{p}_{\mu}(\mathbb{R}^n_+)\to  L^{p}_{\lambda}(\mathbb{R}^n_+)\big\| \leq C \|b\|_{ {\rm BMO}_{\Delta_{N_+},\nu}({\mathbb R}^n_+)}. $$
\end{theorem}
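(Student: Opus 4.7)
The plan is to reduce the commutator estimate on $\mathbb{R}^n_+$ to the classical Bloom commutator theorem \eqref{HLW} via the reflection principle for the Neumann Laplacian. The key identity is that, for $f$ defined on $\mathbb{R}^n_+$, denoting by $\widetilde f(x',x_n) := f(x',|x_n|)$ its even extension across $\{x_n=0\}$, one has $e^{-t\Delta_{N_+}}f = (e^{-t\Delta}\widetilde f)|_{\mathbb{R}^n_+}$, and consequently
\[
R_{N_+,i}f = (R_i\widetilde f)\big|_{\mathbb{R}^n_+}, \qquad i=1,\dots,n,
\]
where $R_i$ is the classical $i$-th Riesz transform on $\mathbb{R}^n$. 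Note that $R_i\widetilde f$ is even in $x_n$ for $i<n$ and odd in $x_n$ for $i=n$; in either case, the restriction to $\mathbb{R}^n_+$ is what we want. I would similarly reflect the weights and the symbol: set $\widetilde\mu,\widetilde\lambda,\widetilde\nu$ and $\widetilde b$ to be the even extensions.

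For the upper bound (sufficiency), I would first verify that $\widetilde\mu,\widetilde\lambda\in A^p(\mathbb{R}^n)$ in the classical Muckenhoupt sense, with comparable constants; this is routine because splitting a cube $Q\subset\mathbb{R}^n$ straddling $\{x_n=0\}$ into its parts in $\mathbb{R}^n_\pm$ reduces $A^p$-averages to those on $\mathbb{R}^n_+$. Next, one needs to show $\widetilde b\in\mathrm{BMO}_{\widetilde\nu}(\mathbb{R}^n)$ with $\|\widetilde b\|_{\mathrm{BMO}_{\widetilde\nu}(\mathbb{R}^n)}\lesssim \|b\|_{\mathrm{BMO}_{\Delta_{N_+},\nu}(\mathbb{R}^n_+)}$, which should follow from the precise formulation of the weighted BMO space adapted to $\Delta_{N_+}$ (one expects it to be defined so as to be compatible with reflection). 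Then for $f\in L^p_\mu(\mathbb{R}^n_+)$,
\[
[b,R_{N_+,i}]f = \big([\widetilde b,R_i]\widetilde f\big)\big|_{\mathbb{R}^n_+},
\]
and applying the Holmes--Lacey--Wick theorem \eqref{HLW} to $\widetilde b$, $\widetilde f$ with weights $\widetilde\mu,\widetilde\lambda$ yields the desired $L^p_\mu\to L^p_\lambda$ bound, since $\|\widetilde f\|_{L^p_{\widetilde\mu}(\mathbb{R}^n)}^p = 2\|f\|_{L^p_\mu(\mathbb{R}^n_+)}^p$ and similarly for $\widetilde\lambda$.

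For the lower bound (necessity), I would follow the approach in \cite{HLW}: use weak factorization of the weighted Hardy space $H^1_\nu$ associated with $\Delta_{N_+}$ and duality with $\mathrm{BMO}_{\Delta_{N_+},\nu}$. Concretely, one shows that a function $h\in H^1_\nu(\mathbb{R}^n_+)$ admits a representation $h=\sum_k \lambda_k(g_k R_{N_+,i}f_k - f_k R_{N_+,i}g_k)$ (for an appropriate choice of $i$) with control $\sum_k|\lambda_k|\|f_k\|_{L^p_\mu}\|g_k\|_{L^{p'}_{\lambda^{1-p'}}}\lesssim \|h\|_{H^1_\nu}$, so that pairing with $b$ and using uniform commutator bounds yields $|\langle b,h\rangle|\lesssim \|h\|_{H^1_\nu}$, hence $b\in \mathrm{BMO}_{\Delta_{N_+},\nu}$. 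Alternatively, one can test the commutator directly on functions of the form $\mathbf 1_Q$ or on analytic families adapted to each cube $Q\subset\mathbb{R}^n_+$, recovering the mean oscillation of $b$ with respect to $\nu$.

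The main obstacle is the cross-compatibility between the Neumann-adapted weighted BMO space on $\mathbb{R}^n_+$ and the Muckenhoupt--Wheeden BMO of the reflected symbol on $\mathbb{R}^n$, particularly for cubes touching $\{x_n=0\}$, where the reflection creates boundary contributions that must be absorbed. Establishing this equivalence (or working intrinsically on $\mathbb{R}^n_+$ without reflecting) is where the nontrivial harmonic analysis lives, and it is essentially what makes the theorem special to the Neumann (rather than Dirichlet) setting, in which the analogous reflection is odd and destroys the BMO structure; this is consistent with the paper's remark that the analogue fails for $\Delta_{D_+}$.
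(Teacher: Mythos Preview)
Your reflection strategy for the upper bound is exactly what the paper does: extend $b,\mu,\lambda,f$ evenly to $\mathbb{R}^n$, use the identity $R_{N_+,i}f=(R_i\widetilde f)|_{\mathbb{R}^n_+}$ to write $[b,R_{N_+,i}]f=([\widetilde b,R_i]\widetilde f)|_{\mathbb{R}^n_+}$, and then invoke \cite{HLW}. You are right that the equivalence $\|\widetilde b\|_{\mathrm{BMO}_{\widetilde\nu}(\mathbb{R}^n)}\approx\|b\|_{\mathrm{BMO}_{\Delta_{N_+},\nu}(\mathbb{R}^n_+)}$ is the crux, and the paper does not treat it as routine: it occupies Sections~4--6 (Theorem~\ref{p-coinc Dz BMO}, Theorem~\ref{t: coinc clas BMO and Lapl}, Proposition~\ref{p-coinc Dz-BMO-2}), going through a semigroup characterization of Muckenhoupt--Wheeden BMO, weighted John--Nirenberg, wavelet Carleson measures, and duality. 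So your diagnosis of ``the main obstacle'' is accurate, but your sketch does not indicate how you would prove it.

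For the lower bound you diverge from the paper in two respects. First, the paper does \emph{not} use weak factorization of $H^1_{\Delta_{N_+},\nu}$; it deduces weak factorization as a \emph{corollary} of the commutator theorem (Corollary~1.3), so your proposed route would be circular in the paper's logic. Instead the paper uses Janson's Fourier-series trick: expand $1/R_l(z)$ as an absolutely convergent Fourier series on a ball away from the origin, and test the commutator against the functions $\unit_{Q'}e^{-i\delta\sigma_k\cdot x/2r}$ to recover $\int_Q|b-\langle b\rangle_Q|\lesssim\nu(Q)$. Second, before one can run that argument on $\mathbb{R}^n$, one must pass from the half-space bound for $[b,R_{N_+,i}]$ to a full-space bound for $[\widetilde b,R_i]$ acting on \emph{all} of $L^p_{\widetilde\mu}(\mathbb{R}^n)$, not just on even functions. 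The paper handles this (see the argument around \eqref{e:bdd commt even ext}) by checking the parity of $[\widetilde b,R_l](f_{+,e})$ and $[\widetilde b,R_l](f_{+,o})$ in $x_n$ separately for $l<n$ and $l=n$, thereby controlling the commutator on both even and odd inputs and hence on arbitrary $f$. Your sketch does not address this step.
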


Concerning the new class of $A^p$ weights as posed in Question 3, we now take a natural even reflection of the
Neumann Laplacian $\Delta_{N_+}$ on $\R^n_+$, denoted it by $\Delta_N$. It is direct that the reflection Neumann Laplacian $\Delta_N$ is also
a non-negative self-adjoint operator on $\R^n$.
Then associated to the Neumann Laplacian $\Delta_N$ on $\mathbb R^n$ we define a new class of weights, denoted by $A^p_{\Delta_N}(\mathbb R^n)$, which
strictly contains the Muckenhoupt class $A^p$ and contains even certain particular element which does not satisfy the doubling condition, while all classical Muckenhoupt weights do.
To see that this new class $A^p_{\Delta_N}(\mathbb R^n)$ is well-defined and connected to $\Delta_N$, we first prove that
$R_{N,i}:= {\partial\over\partial x_i} \Delta_N^{-\frac{1}{2}}$, the $i$-th Riesz transforms associated to $\Delta_N$, is bounded on $L^p_w(\R^n)$ if and only if
$w$ is in $A^p_{\Delta_N}(\mathbb R^n)$.
We also establish the exp-log bridge between this $A^p_{\Delta_N}(\mathbb R^n)$ and the weighted BMO space
${\rm BMO}_{\Delta_N}(\mathbb R^n)$, which extends the classical results in \cite{GR}*{Theorem 2.17 and Corollary 2.19}.





%

Our next main result is to show that the two weight commutator theorem \eqref{HLW} is true for the reflection Neumann Laplacian $\Delta_N$.
\begin{theorem}\label{t:upp-low Neum Lap wh-sp commu}
Suppose $1<p<\infty$ and $\mu,\lambda \in  A^p_{\Delta_N}(\mathbb R^n)$. Set $\nu = \mu^{\frac{1}{p}} \lambda^{-\frac{1}{p}}$. Then there are constants $0<c<C<\infty$, depending only on $n,p,\mu,\lambda$ such that for $i=1,\ldots,n$,
$$c\|b\|_{ {\rm BMO}_{\Delta_N,\nu}({\mathbb R}^n)} \leq \big\| [b,R_{N,i}] : L^{p}_{\mu}(\mathbb{R}^n)\to  L^{p}_{\lambda}(\mathbb{R}^n)\big\| \leq C \|b\|_{ {\rm BMO}_{\Delta_N,\nu}({\mathbb R}^n)}. $$
\end{theorem}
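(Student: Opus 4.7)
The plan is to lift Theorem~\ref{t:upp-low Neum Lap ha-sp commu} from $\R^n_+$ to $\R^n$ via the even-reflection symmetry that defines $\Delta_N$. Let $\sigma(x',x_n)=(x',-x_n)$, write $x^*=\sigma(x)$, and put $\R^n_-=\sigma(\R^n_+)$. From the construction of $\Delta_N$, its heat kernel has the form $p_t^N(x,y)=p_t(x-y)+p_t(x-y^*)$ with $p_t$ the Euclidean heat kernel; by the subordination formula $\Delta_N^{-1/2}=c\int_0^\infty t^{-1/2}e^{-t\Delta_N}\,dt$ and differentiation in $x_i$, the kernel of $R_{N,i}$ takes the shape $K_{N,i}(x,y)=K_i(x-y)+\ve_i K_i(x-y^*)$, where $K_i$ is the $i$-th classical Riesz kernel and $\ve_i=+1$ for $i\le n-1$, $\ve_i=-1$ for $i=n$. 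In particular, if $f$ is supported in $\R^n_+$ and $x\in\R^n_+$ then $R_{N,i}f(x)=R_{N_+,i}f(x)$, so the whole-space operator decouples along the reflection.

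For the upper bound I split $\R^n=\R^n_+\cup\R^n_-$ in both the input and the dual test, writing $f=f\unit_{\R^n_+}+f\unit_{\R^n_-}$ and similarly for $g$, and decompose $\langle [b,R_{N,i}]f,g\rangle$ into four pieces indexed by the signs of source and target. The two \emph{same-side} pieces reduce, via the kernel decomposition above, to half-space commutators of $R_{N_+,i}$ on $\R^n_+$ and its $\sigma$-reflection; these are handled by Theorem~\ref{t:upp-low Neum Lap ha-sp commu} once one invokes the characterization of $A^p_{\Delta_N}$ given in the preceding part of the paper to see that $\mu|_{\R^n_\pm},\lambda|_{\R^n_\pm}\in A^p(\R^n_\pm)$, together with the fact that the half-space ${\rm BMO}_{\Delta_{N_+},\nu|_{\R^n_\pm}}$-norm of $b|_{\R^n_\pm}$ is controlled by $\|b\|_{{\rm BMO}_{\Delta_N,\nu}(\R^n)}$. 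The two \emph{cross-side} pieces involve only the reflected kernel $K_i(x-y^*)$; after the change of variable $y\mapsto y^*$ each becomes a same-side commutator with the reflected symbol $b\circ\sigma$ and reflected weights, to which Theorem~\ref{t:upp-low Neum Lap ha-sp commu} again applies.

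For the lower bound I follow the testing / weak-factorization template of \cite{HLW}. Given a cube $Q\subset\R^n$, I construct $f\in L^p_\mu$ and $g\in L^{p'}_{\lambda^{1-p'}}$ with controlled norms such that $|\langle [b,R_{N,i}]f,g\rangle|$ dominates a suitable $\nu$-weighted mean oscillation of $b$ on $Q$, yielding $\|b\|_{{\rm BMO}_{\Delta_N,\nu}}\lesssim \sum_i\|[b,R_{N,i}]\|_{L^p_\mu\to L^p_\lambda}$. When $Q$ is well inside one of $\R^n_\pm$, the reflected part of $K_{N,i}$ is negligible on the relevant region and the classical Riesz-transform testing argument works; when $Q$ is close to or straddles $\{x_n=0\}$, I restrict attention to the half-space where most of $Q$'s mass lies and invoke the lower bound in Theorem~\ref{t:upp-low Neum Lap ha-sp commu} directly.

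The main obstacle is that $A^p_{\Delta_N}(\R^n)$ contains weights that fail to be doubling across $\{x_n=0\}$, so sparse/Bloom-type techniques cannot be executed globally on $\R^n$. What makes the strategy succeed is that $\Delta_N$, $R_{N,i}$, and ${\rm BMO}_{\Delta_N,\nu}$ are, by construction, symmetric under $\sigma$, so the bilinear pairing $\langle[b,R_{N,i}]f,g\rangle$ genuinely decouples into four pieces each of which, after a possible reflection, lives on a single half-space — precisely the regime in which doubling is restored and Theorem~\ref{t:upp-low Neum Lap ha-sp commu} is available.
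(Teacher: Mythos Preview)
Your proposal has a structural error that propagates through the whole plan: the heat kernel of $\Delta_N$ is \emph{not} $p_t(x-y)+p_t(x-y^*)$. By \eqref{def-of-ptN} it carries the Heaviside factor $H(x_ny_n)$, and consequently (see \eqref{Riesz kernel}) the Riesz kernel $R_{N,i}(x,y)$ \emph{vanishes} when $x$ and $y$ lie in opposite half-spaces. Thus your two ``cross-side'' pieces are identically zero; there is nothing to estimate there, and the commutator decouples exactly as
\[
[b,R_{N,i}]f \;=\; [b_+,R_{N_+,i}]f_+ \ \text{on } \R^n_+,\qquad
[b,R_{N,i}]f \;=\; [b_-,R_{N_-,i}]f_- \ \text{on } \R^n_-.
\]
(Incidentally your sign $\ve_n=-1$ is also wrong: Proposition~\ref{p:RieszKernel} shows $\ve_n=+1$; the minus sign appears in the Dirichlet case.) Once this decoupling is in hand, your case analysis in the lower bound (cubes ``straddling'' $\{x_n=0\}$, reflected kernel being ``negligible'', etc.) is unnecessary and in fact aimed at a phenomenon that does not occur.

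More seriously, you invoke Theorem~\ref{t:upp-low Neum Lap ha-sp commu} as a black box to prove Theorem~\ref{t:upp-low Neum Lap wh-sp commu}, but in the paper the logical direction is the opposite (see Remark~\ref{remark Th1.1}): Theorem~\ref{t:upp-low Neum Lap ha-sp commu} is extracted \emph{from} the proof of Theorem~\ref{t:upp-low Neum Lap wh-sp commu}. The paper's route avoids this circularity by going straight to the classical Riesz transforms: for $x\in\R^n_+$ one has $[b,R_{N,l}]f(x)=[b_{+,e},R_l]f_{+,e}(x)$, and since $\mu_{+,e},\lambda_{+,e}\in A^p(\R^n)$ the upper bound follows from \cite{HLW} directly. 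For the lower bound the paper shows that boundedness of $[b,R_{N,l}]$ forces boundedness of $[b_{+,e},R_l]:L^p_{\mu_{+,e}}\to L^p_{\lambda_{+,e}}$ (this step requires testing on both even \emph{and odd} extensions of half-space functions, which your sketch does not address), and then applies Janson's Fourier-expansion argument rather than an ad hoc cube-by-cube testing. The BMO identification $\|b\|_{{\rm BMO}_{\Delta_N,\nu}}\approx \|b_{+,e}\|_{{\rm BMO}_{\nu_{+,e}}}+\|b_{-,e}\|_{{\rm BMO}_{\nu_{-,e}}}$ (Proposition~\ref{p-coinc Dz-BMO-2}) closes both directions.
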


We remark that this theorem extends the result \eqref{HLW} for the Laplacian in \cite{HLW} and the unweighted result in \cite{LW}.
It is interesting to note that our theorem holds true for:

(i) $b\in {\rm BMO}_{\Delta_N,\nu}({\mathbb R}^n)$,  which strictly contains ${\rm BMO}_{\nu}({\mathbb R}^n)$, the classical weighted BMO space introduced by Muckenhoupt--Wheeden \cite{MW76}, and,

(ii) $\mu,\lambda$ belong to $A^p_{\Delta_N}(\mathbb R^n)$ which covers not only all the standard Muckenhoupt weights $A^p$ but also some weights beyond $A^p$ which are non-doubling weights.


%
%
%
%

In our study of the weighted BMO space  ${\rm BMO}_{\Delta_N,\nu}({\mathbb R}^n)$ associated with $\Delta_N$, we also make new contributions to classical weighted Hardy and BMO spaces introduced by Muckenhoupt--Wheeden \cite{MW78} and further studied by Garc\'ia-Cuerva \cite{Ga} and Wu \cite{Wu}. In particular, we obtain:
\begin{itemize}
\item[(1)] a new characterization of the Muckenhoupt--Wheeden weighted Hardy and BMO spaces by using the semigroup generated by the Laplacian on $\R^n$ for $w\in A^p(\mathbb R^n)$ with $1<p\leq2$ (see Theorems \ref{t-coin Hardy space cla and Lapl} and \ref{t: coinc clas BMO and Lapl} below);
\item[(2)] the duality of $H^1_w(\R^n)$ and ${\rm BMO}_w(\R^n)$ for $w\in A^p(\mathbb R^n)$ with $1<p\leq2$ (Theorem \ref{t:dual BMO cla}). 
    This extends the classical result of Muckenhoupt--Wheeden \cite{MW78}, which works only for $w\in A^p(\mathbb R^n)$  with  $1<p\leq {n+1\over n}$, and for $p>{n+1\over n}$, their weighted BMO type spaces were defined through the subtraction of polynomials. 
\end{itemize}

Note that we also introduce the weighted Hardy space $H^1_{\Delta_N,\nu}({\mathbb R}^n)$ associated with $\Delta_N$ and prove that it is the predual of ${\rm BMO}_{\Delta_N,\nu}({\mathbb R}^n)$ (Theorem \ref{t-dual}). Similar to \cite{CRW}, our Theorem \ref{t:upp-low Neum Lap wh-sp commu} yields the weak factorization of $H^1_{\Delta_N,\nu}({\mathbb R}^n)$ as follows. The proof is known and hence omitted.
\begin{cor}
Let all the notation and assumptions be the same as in Theorem \ref{t:upp-low Neum Lap wh-sp commu}. Then for every $i=1,\ldots,n$, every $f\in H^1_{\Delta_N,\nu}({\mathbb R}^n)$ can be written as
$$ f(x)= \sum_{j=1}^\infty g_j^i(x) R_{N,i}(h_j^i)(x) + h_j^i(x) R_{N,i}(g_j^i)(x)$$
with $h_j^i\in L^p_\mu(\R^n)$, $g_j^i \in L^{p'}_{\lambda'}(\R^n)$, $p'={p\over p-1}$, and $\lambda'=\lambda^{-{1\over p-1}}$  satisfying
$$ \sum_{j=1}^\infty\|g_j^i\|_{L^{p'}_{\lambda'}(\R^n)}\|h_j^i\|_{L^p_\mu(\R^n)}
\approx \\ \|f\|_{H^1_{\Delta_N,\nu}({\mathbb R}^n)}. $$
\end{cor}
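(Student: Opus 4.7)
The plan is to adapt the Coifman--Rochberg--Weiss \cite{CRW} weak-factorization scheme to the two-weight setting, combining Theorem \ref{t:upp-low Neum Lap wh-sp commu} with the duality $H^1_{\Delta_N,\nu}({\mathbb R}^n)^{\ast} = {\rm BMO}_{\Delta_N,\nu}({\mathbb R}^n)$ asserted in Theorem \ref{t-dual}. Fix $i\in\{1,\dots,n\}$ and define the bilinear form $\Pi_i(g,h):= g\,R_{N,i}(h) + h\,R_{N,i}(g)$. The (skew-)self-adjointness of $R_{N,i}$ on $L^2({\mathbb R}^n)$ furnishes the pairing identity
$$\langle b,\Pi_i(g,h)\rangle = \langle [b,R_{N,i}]h,\,g\rangle,\qquad b\in {\rm BMO}_{\Delta_N,\nu}({\mathbb R}^n),$$
which is the algebraic bridge between the commutator theorem and the factorization.

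The easy direction $\|\sum_j\Pi_i(g_j^i,h_j^i)\|_{H^1_{\Delta_N,\nu}} \lesssim \sum_j\|g_j^i\|_{L^{p'}_{\lambda'}}\|h_j^i\|_{L^{p}_{\mu}}$ follows by taking the supremum of the pairing identity over the unit ball of ${\rm BMO}_{\Delta_N,\nu}({\mathbb R}^n)$: weighted H\"older with the duality $(L^p_\lambda)^{\ast}=L^{p'}_{\lambda'}$ gives $|\langle [b,R_{N,i}]h,g\rangle|\leq \|[b,R_{N,i}]h\|_{L^p_\lambda}\|g\|_{L^{p'}_{\lambda'}}$, and the upper commutator bound from Theorem \ref{t:upp-low Neum Lap wh-sp commu} converts this into $C\|b\|_{{\rm BMO}_{\Delta_N,\nu}}\|h\|_{L^{p}_{\mu}}\|g\|_{L^{p'}_{\lambda'}}$. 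This yields the $\gtrsim$ half of the claimed norm equivalence and guarantees convergence in $H^1_{\Delta_N,\nu}$ of any series of the stated shape.

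The $\lesssim$ direction reduces, in the standard way, to the following quantitative approximation lemma: there exists $\theta\in(0,1)$ such that every $f\in H^1_{\Delta_N,\nu}({\mathbb R}^n)$ admits $g\in L^{p'}_{\lambda'}({\mathbb R}^n)$ and $h\in L^{p}_{\mu}({\mathbb R}^n)$ with $\|g\|_{L^{p'}_{\lambda'}}\|h\|_{L^{p}_{\mu}}\leq C\|f\|_{H^1_{\Delta_N,\nu}}$ and $\|f-\Pi_i(g,h)\|_{H^1_{\Delta_N,\nu}}\leq \theta\,\|f\|_{H^1_{\Delta_N,\nu}}$. Iterating this estimate and summing the resulting geometric series produces $g_j^i, h_j^i$ realizing $f=\sum_j \Pi_i(g_j^i,h_j^i)$ with the asserted norm control. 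Density of $\{\Pi_i(g,h)\}$ in $H^1_{\Delta_N,\nu}({\mathbb R}^n)$, which underlies the lemma, is a Hahn--Banach exercise: a nonzero annihilating $b$ would, via the pairing identity above, give $[b,R_{N,i}]\equiv 0$ on a dense class, whence the lower commutator bound of Theorem \ref{t:upp-low Neum Lap wh-sp commu} forces $\|b\|_{{\rm BMO}_{\Delta_N,\nu}}=0$, a contradiction.

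The main obstacle is turning this qualitative density into the quantitative approximation lemma with an explicit $\theta<1$ and the simultaneous linear control on $\|g\|_{L^{p'}_{\lambda'}}\|h\|_{L^{p}_{\mu}}$: it is here, rather than at the injectivity step, that one uses the sharpness of the lower commutator estimate. Once this is in place, the remaining iteration and series bookkeeping are purely formal, which is presumably why the authors regard the proof as known.
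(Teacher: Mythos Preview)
Your outline is the CRW scheme the paper has in mind when it says ``the proof is known and hence omitted,'' but two points need attention.

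First, the pairing identity $\langle b,\Pi_i(g,h)\rangle=\langle[b,R_{N,i}]h,g\rangle$ presupposes $R_{N,i}^*=-R_{N,i}$. By Proposition~\ref{p:RieszKernel} this holds for $i<n$ (both summands of the kernel are odd under $x\leftrightarrow y$), but it \emph{fails} for $i=n$: the reflected piece $-C_n(x_n+y_n)\big(|x'-y'|^2+|x_n+y_n|^2\big)^{-(n+1)/2}$ is symmetric in $(x,y)$, so $R_{N,n}+R_{N,n}^*\neq 0$ and your identity acquires an uncontrolled remainder $\langle bh,(R_{N,n}+R_{N,n}^*)g\rangle$. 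The robust identity, valid for any bounded $T$, is $\langle b,\,gTh - hT^*g\rangle = \langle [b,T]h,g\rangle$; alternatively, one can pass through the reflection $R_{N,i}f\big|_{\mathbb{R}^n_\pm}=R_i(f_{\pm,e})$ to the classical Riesz transforms (which \emph{are} skew-adjoint), exactly as in the proof of Theorem~\ref{t:upp-low Neum Lap wh-sp commu}, and run the factorization for $f_{\pm,e}\in H^1_{\nu_{\pm,e}}(\mathbb{R}^n)$.

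Second, while you correctly flag the quantitative approximation lemma as the crux, it is not deduced from the lower commutator inequality by any abstract argument. The standard route (as in \cite{CRW}, and in \cite{LW} for this Neumann setting) is an explicit construction at the level of a single atom: given an atom $a$ supported on a ball $B$, one takes $g,h$ to be suitably normalized characteristic-type functions of $B$ and a disjoint translate $B'$, and uses the \emph{pointwise} lower bound on the Riesz kernel---the same positivity exploited in the proofs of Theorems~\ref{t-Ap Nuem char Riesz trans} and~\ref{t:upp-low Neum Lap wh-sp commu}---to show that $\Pi_i(g,h)$ reproduces $a$ up to an $H^1$-error at most $\theta\|a\|_{H^1}$ for a fixed $\theta<1$. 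The Hahn--Banach density you sketch is a consequence of this construction, not a substitute for it.
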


With these positive answers (Theorems \ref{t:upp-low Neum Lap ha-sp commu} and \ref{t:upp-low Neum Lap wh-sp commu}) to Questions 1, 2 and 3, it is natural to expect to extend these results to more settings associated with a general differential operator $L$. However, we point out that it might not be true to have this two weight commutator type theorem associated with a general operator $L$  even when $L$ possesses ``smooth'' regularity such as Gaussian estimates on heat kernel and its derivatives. More specifically, we show that the BMO space can not be characterised by the boundedness of the commutator when $L$ is the Dirichlet Laplacian $\Delta_{D_+}$ on $\R^n_+$, .

To be more precise, the  Dirichlet Laplacian $\Delta_{D_+}$ with the Riesz transform $\nabla \Delta_{D_+}^{-{1\over2}}$ provides a negative answer to \eqref{HLW}.
Suppose $1<p<\infty$, we denote by $A^p(\R^n_+)$ the Muckenhoupt weights on $\R^n_+$, and for
$\mu,\lambda \in  A^p(\R^n_+)$, we set $\nu = \mu^{\frac{1}{p}} \lambda^{-\frac{1}{p}}$.
Also we denote by ${\rm BMO}_{\Delta_{D_+},\nu}(\R^n_+)$ the  weighted BMO space  associated with $\Delta_{D_+}$ on $\R^n_+$.
We
 note that for $b$ in ${\rm BMO}_{\Delta_{D_+},\nu}(\R^n_+)$, the commutator $[b,\nabla \Delta_{D_+}^{-{1\over2}}]$ possesses the upper bound, i.e., it is bounded from $L^p_\mu(\R^n_+)$ to $L^p_\lambda(\R^n_+)$, since $\nabla \Delta_{D_+}^{-{1\over2}}$ is a standard Calder\'on--Zygmund operator on $\R^n_+$ and hence the upper bound follows from \cite{HLW}. However,  the BMO space can NOT be characterised by the boundedness of the commutator for certain $A^p$ weights.
 To see this, we just use the simple weights $\mu=\lambda=1$ to get the counter example. In this case, we have that $\nu=\mu^{\frac{1}{p}} \lambda^{-\frac{1}{p}}=1$. Then  ${\rm BMO}_{\Delta_{D_+},\nu}(\R^n_+)$ becomes ${\rm BMO}_{\Delta_{D_+}}(\R^n_+)$, i.e., the unweighted BMO associated with $\Delta_{D_+}$ as defined and studied in \cite{DDSY}.
We have the following result.
\begin{theorem}\label{t:Dirich commuta counterex}
There exist $b_0\not \in {\rm BMO}_{\Delta_{D_+}}(\R^n_+)$ and a constant $0<C_{b_0}<\infty$, such that
$$ \| [b_0,\nabla \Delta_{D_+}^{-{1\over2}}] : L^{p}(\mathbb{R}^n_+)\to  L^{p}(\mathbb{R}^n_+)\| \leq C_{b_0}. $$
\end{theorem}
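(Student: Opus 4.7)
The plan is to reduce both sides of the would-be estimate on $\R^n_+$ to classical objects on $\R^n$ by reflecting across $\{x_n=0\}$. For $f$ on $\R^n_+$ write $f_o$ for its odd extension in $x_n$, and for $b$ on $\R^n_+$ write $b_e$ for its even extension. The reflection principle for the Dirichlet semigroup gives $e^{-t\Delta_{D_+}}f=(e^{-t\Delta}f_o)|_{\R^n_+}$, and therefore $\nabla\Delta_{D_+}^{-1/2}f=(\nabla\Delta^{-1/2}f_o)|_{\R^n_+}$. A direct check shows $(bf)_o=b_e f_o$, which yields the pointwise identity
\begin{equation*}
[b,\nabla\Delta_{D_+}^{-1/2}]f(x)=\bigl([b_e,\nabla\Delta^{-1/2}]f_o\bigr)(x),\qquad x\in\R^n_+.
\end{equation*}
Combined with $\|f_o\|_{L^p(\R^n)}=2^{1/p}\|f\|_{L^p(\R^n_+)}$, this produces
\begin{equation*}
\bigl\|[b,\nabla\Delta_{D_+}^{-1/2}]\bigr\|_{L^p(\R^n_+)\to L^p(\R^n_+)}\lesssim \bigl\|[b_e,\nabla\Delta^{-1/2}]\bigr\|_{L^p(\R^n)\to L^p(\R^n)},
\end{equation*}
and the right-hand side is finite whenever $b_e\in{\rm BMO}(\R^n)$, by the classical Coifman--Rochberg--Weiss theorem \cite{CRW}.

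For the BMO side I transport the Carleson-measure definition of ${\rm BMO}_{\Delta_{D_+}}(\R^n_+)$ from \cite{DDSY} through the same reflection. The identity $e^{-t^2\Delta_{D_+}}b=(e^{-t^2\Delta}b_o)|_{\R^n_+}$ converts the defining density $|t\partial_t e^{-t^2\Delta_{D_+}}b(x)|^2\,dx\,dt/t$ into the restriction of the classical Carleson density $|t\partial_t e^{-t^2\Delta}b_o(x)|^2\,dx\,dt/t$ to $\R^n_+\times(0,\infty)$. Because $b_o$ is odd in $x_n$, this density is even in $x_n$, so Carleson estimates over tents in $\R^n_+$ are equivalent to Carleson estimates over all tents in $\R^n$. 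This is meant to yield the identification
\begin{equation*}
b\in{\rm BMO}_{\Delta_{D_+}}(\R^n_+)\quad\Longleftrightarrow\quad b_o\in{\rm BMO}(\R^n).
\end{equation*}

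With these two reductions in hand, the theorem follows by exhibiting any $b_0$ whose even extension lies in ${\rm BMO}(\R^n)$ while its odd extension does not. A clean choice is $b_0(x)=\log|x_1|$: its even extension is the same function $\log|x_1|$, a standard BMO function on $\R^n$, so the commutator is bounded on $L^p(\R^n_+)$. The odd extension $\operatorname{sgn}(x_n)\log|x_1|$ has mean zero on every ball $B(0,r)$ by symmetry in $x_n$, yet the scaling $x=ru$ gives
\begin{equation*}
\frac{1}{|B(0,r)|}\int_{B(0,r)}\bigl|\log|x_1|\bigr|\,dx=\frac{1}{|B(0,1)|}\int_{B(0,1)}\bigl|\log r+\log|u_1|\bigr|\,du,
\end{equation*}
which grows like $|\log r|$ as $r\to\infty$. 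Hence $b_o\notin{\rm BMO}(\R^n)$, so $b_0\notin{\rm BMO}_{\Delta_{D_+}}(\R^n_+)$. The delicate part of the plan is the identification ${\rm BMO}_{\Delta_{D_+}}(\R^n_+)\simeq\{b:b_o\in{\rm BMO}(\R^n)\}$, where one must carefully match the operator-adapted Carleson norm of \cite{DDSY} with the classical John--Nirenberg norm via the evenness of $|t\partial_t e^{-t^2\Delta}b_o|^2$ in $x_n$; everything else reduces to the reflection principle and the classical BMO calculus for $\log|x_1|$.
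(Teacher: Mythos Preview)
Your argument is correct and follows essentially the same strategy as the paper: both rely on the identification ${\rm BMO}_{\Delta_{D_+}}(\R^n_+)={\rm BMO}_o(\R^n_+)$ from \cite{DDSY} together with the fact that the commutator is already bounded for $b$ in the strictly larger space ${\rm BMO}(\R^n_+)$ (equivalently, $b_e\in{\rm BMO}(\R^n)$). The paper reaches the latter by verifying directly that $\nabla\Delta_{D_+}^{-1/2}$ is a Calder\'on--Zygmund operator on $\R^n_+$ and invoking \cite{CRW} there, then appeals abstractly to the strict inclusion ${\rm BMO}_{\Delta_{D_+}}(\R^n_+)\subsetneq{\rm BMO}(\R^n_+)$. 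You instead use the reflection identity $[b,\nabla\Delta_{D_+}^{-1/2}]f=([b_e,\nabla\Delta^{-1/2}]f_o)|_{\R^n_+}$ to bypass the half-space kernel estimates, and you supply an explicit $b_0=\log|x_1|$ rather than a pure existence statement. Both routes are short; yours is a little more concrete, while the paper's makes the Calder\'on--Zygmund structure of the Dirichlet Riesz transform explicit. Note that the ``delicate part'' you flag is precisely the content of \cite{DDSY}, so you may cite it directly rather than re-derive it.
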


\subsection{Structure and Main Methods}

We first point out that the proof of Theorem \ref{t:upp-low Neum Lap ha-sp commu} follows from that of Theorem \ref{t:upp-low Neum Lap wh-sp commu} since in the second theorem we consider $\Delta_N$ which is an even reflection of the Neumann Laplacian $\Delta_{N_+}$ on $\R^n$ as studied in the first theorem (see Remark \ref{remark Th1.1}). Thus, we just explain the method of the proof of Theorem \ref{t:upp-low Neum Lap wh-sp commu}.

To begin with, we note that the structure of the reflection plays an important role here for the study of the BMO space, $A^p$ weights, Riesz transforms and commutators associated with Neumann Laplacian $\Delta_N$.

We first introduce the class of weights $A^p_{\Delta_N}(\mathbb R^n)$ associated with $\Delta_N$
for $p\in[1, \infty)$  (Definition \ref{d-Ap Delta}) and  point out that as in the assumption in Theorem \ref{t:upp-low Neum Lap wh-sp commu},
for $1<p<\infty$ and $\mu,\lambda \in  A^p_{\Delta_N}(\mathbb R^n)$, the new weight  $\nu := \mu^{\frac{1}{p}} \lambda^{-\frac{1}{p}}$
is in $A^2_{\Delta_N}(\mathbb R^n)$.
Then we introduce the weighted BMO space
${\rm BMO}_{\Delta_N,\nu}(\mathbb R^n)$ associated with $\Delta_N$ for $\nu\in A^2_{\Delta_N}(\mathbb R^n)$.
To study Theorem \ref{t:upp-low Neum Lap wh-sp commu}, we first establish the following important property of ${\rm BMO}_{\Delta_N,\nu}(\mathbb R^n)$ as follows (Theorem \ref{p-coinc Dz BMO}): for $\nu\in A^2_{\Delta_N}(\mathbb R^n)$, $f\in {\rm BMO}_{\Delta_N,\nu}(\mathbb R^n)$ if and only if $f_{+,e} \in {\rm BMO}_{\Delta,\nu_{+,e}}(\mathbb R^n)$ and $f_{-,e} \in {\rm BMO}_{\Delta,\nu_{-,e}}(\mathbb R^n)$.  Here ${\rm BMO}_{\Delta,\nu_{+,e}}(\mathbb R^n)$ and ${\rm BMO}_{\Delta,\nu_{-,e}}(\mathbb R^n)$ are the BMO spaces associated with the standard Laplacian on $\R^n$, introduced in Definition \ref{def: BMOwLapalce}, and $f_{\pm,e,o}$ is the even/odd extension to $\R^n$ of the function  $f_{\pm}$, the restriction of $f$ on $\R^n_{\pm}$. And since $\nu\in A^2_{\Delta_N}(\mathbb R^n)$, we get that both $\nu_{+,e}$ and $\nu_{-,e}$ are in the classical Muckenhoupt class $A^2(\R^n)$.

Thus, to study Theorem \ref{t:upp-low Neum Lap wh-sp commu}, we need to further understand the property and structure
of the space ${\rm BMO}_{\Delta,\nu}(\mathbb R^n)$, especially for $\nu\in A^2(\R^n)$. Our key result here (Theorem \ref{t: coinc clas BMO and Lapl}) is to show that for $\nu\in A^2(\R^n)$, ${\rm BMO}_{\Delta,\nu}(\mathbb R^n)$ coincides with the Muckenhoupt--Wheeden weighted BMO space ${\rm BMO}_{\nu}(\mathbb R^n)$ and they have equivalent norms, where ${\rm BMO}_{\nu}(\mathbb R^n)$ is defined (see \cite{MW76}) as the set of all $f\in L^1_{loc}(\R^n)$, such that $$ \|f\|_{{\rm BMO}_{\nu}(\mathbb R^n)}:=\sup_Q {1\over \nu(Q)}\int_Q\big| f-\langle f\rangle_Q  \big|dx <\infty. $$
Here and throughout the whole paper we use $\langle f\rangle_Q:={1\over |Q|}\int_Q f(y)dy$ to denote the average of $f$ over the cube $Q$.
To prove this result, we first prove directly that ${\rm BMO}_{\nu}(\mathbb R^n)\subset {\rm BMO}_{\Delta,\nu}(\mathbb R^n)$ with
$\|f\|_{{\rm BMO}_{\Delta,\nu}(\mathbb R^n)} \leq C\|f\|_{{\rm BMO}_{\nu}(\mathbb R^n)}$.

To show the reverse inclusion, we aim to prove in the following result ${\rm BMO}_{\Delta,\nu}(\mathbb R^n) \subset \big(H^1_{\Delta,\nu}(\mathbb R^n)\big)^*=\big(H^{1,p,\beta}_{\nu}(\mathbb R^n)\big)^* = {\rm BMO}_{\nu}(\mathbb R^n)$
with
$\|f\|_{{\rm BMO}_{\nu}(\mathbb R^n)} \leq C\|f\|_{{\rm BMO}_{\Delta,\nu}(\mathbb R^n)}$,
where  $\beta$ is a non-negative integer, $H^{1,p,\beta}_{\nu}(\mathbb R^n) $ is the atomic Hardy space
whose atom has cancellation up to order $\beta$,
 and $H^1_{\Delta,\nu}(\mathbb R^n)$
is the Hardy space defined via the Littlewood area function associated with $\Delta$, respectively.

We now point out that, for the duality of $H^{1,p,0}_{\nu}(\mathbb R^n)$ and  ${\rm BMO}_{\nu}(\mathbb R^n)$,
the classical results \cite{MW78} and \cite{Ga} hold only for $\nu\in A^p(\R^n)$ with $1<p\leq {n+1\over n}$.
Our contribution to this weighted function space is that we obtain $\big(H^{1,p,0}_{\nu}(\mathbb R^n)\big)^* = {\rm BMO}_{\nu}(\mathbb R^n)$
for $\nu\in A^p(\R^n)$ with $1<p\leq 2$. To prove this result, we need the following elements:

(1) A John--Nirenberg inequality for ${\rm BMO}_{\nu}(\mathbb{R}^n)$, which shows that ${\rm BMO}_{\nu}(\mathbb{R}^n)$ is equivalent to ${\rm BMO}_{\nu,r}(\mathbb{R}^n)$ for all $p\in(1, \infty)$, $r\in[1, p']$ and $\nu\in A^p(\mathbb R^n)$, where
$\frac1p+\frac1{p'}=1$;

(2) The equivalence between ${\rm BMO}_{\nu,2}(\mathbb{R}^n)$ 
for all $r\in[1, 2]$ and the weighted Carleson measure space $CM_\nu$ for all $\nu\in A^2(\mathbb R^n)$;

(3) The duality between the weighted Carleson measure space $CM_\nu$ and the weighted Hardy space $H^1_{\nu,wavelet}(\R^n)$ defined via wavelets basis satisfying 0 order cancellation only,  where $p\in (1,\infty)$
and $\nu \in A^p(\R^n)$;

(4) The coincidence between $H^1_{\nu,wavelet}(\R^n)$ and $H^{1,p,\beta}_{\nu}(\R^n)$ with  a non-negative integer $\beta$.

%

%
%
%

We describe the implications in the following diagram:
\begin{equation}
\begin{matrix}
\hskip-0.8cm{\rm BMO}_{\nu}(\mathbb R^n) &     &     &\\
\hskip-1.2cm\bigg\Updownarrow{(1)} &    &  &\\
\hskip-0.6cm{\rm BMO}_{\nu,2}(\mathbb R^n) &    & H^{1,p,\beta}_{\nu}(\mathbb R^n) &\\ 
\hskip-1.2cm\bigg\Updownarrow{(2)} &    & \bigg\Updownarrow{(4)}&\\
\hskip-1.5cm{CM}_\nu  & \underleftrightarrow{\rm \ \ \ \ (3) \ duality\ \ \ \ \ \ \ }& \ \ \ \  H^1_{\nu, wavelet}(\mathbb R^n)&
\end{matrix}
\end{equation}

We point out (1)  first appeared in \cite{MW76}, where the Muckenhoupt $A^p$ characteristic was not tracked.
We present a modern proof (Theorem \ref{T:MW}) using the techniques of sparse operators and
show that
$$\|b\|_{{\rm BMO}_{\nu}(\mathbb{R}^n)} \leq \|b\|_{{\rm BMO}_{\nu,r}(\mathbb{R}^n)} \leq C_{n,p,r} [\nu]_{A^p}^{\max\{1,{1\over p-1}\}}\|b\|_{{\rm BMO}_{\nu}(\mathbb{R}^n)}.$$

For (2), we point out that we can only prove this equivalence ${\rm BMO}_{\nu,2}(\mathbb{R}^n)$ when we have
the $L^2_{\nu^{-1}}$ norm in the definition of the weighted BMO space.
Hence, it is not clear whether it is true for $\nu\in A^p(\R^n)$ with $p>2$, since in this case due to the John-Nirenberg inequality in step (1), the BMO space is equivalent to  ${\rm BMO}_{\nu,r}(\mathbb{R}^n)$ with $r\leq p'<2$. Moreover, the John-Nirenberg inequality fails in general when $r>p'$.
As a consequence, throughout this diagram we can only make it work for $\nu\in A^p(\R^n)$ with $1<p\leq2$.

For (3), we note that this duality result first appeared in \cite{Wu}, where the order of cancellation of the wavelet basis was not tracked. However, this order plays a key role to us. Thus, we prove that this duality holds for wavelet basis satisfying zero order cancellation only (Theorem \ref{t: Wu}). Combining this duality argument and the results in (1) and (2),
we obtain that for $\nu\in A^p(\R^n)$ with $1<p\leq2$, the definition of $H^1_{\nu, wavelet}(\mathbb R^n)$
is independent of the order of cancellation of the wavelet basis (Theorem \ref{t: H1waveletcharacterization}). To the best
of our knowledge this is not explicitly known before.


For (4), we note that for each non-negative integer $\beta$, we choose a wavelet basis that satisfies cancellation
condition  of order at least $\beta$. Then the proof of the equivalence between the weighted Hardy space $H^1_{\nu,wavelet}(\R^n)$
and the atomic Hardy space $H^{1,p,\beta}_{\nu}(\mathbb R^n)$ follows from a standard approach.
Next we point out that the results of steps (2) and (3) together imply that
 $H^1_{\nu,wavelet}(\R^n)$ is independent of the choice of wavelet basis. Thus, we further obtain that
the spaces $H^{1,p,\beta}_{\nu}(\mathbb R^n)$ are equivalent for arbitrary integers $\beta\geq0$.

Combining the results from above, we obtain that
for $\nu\in A^2_{\Delta_N}(\mathbb R^n)$, $f\in {\rm BMO}_{\Delta_N,\nu}(\mathbb R^n)$ if and only if $f_{+,e} \in {\rm BMO}_{\nu_{+,e}}(\mathbb R^n)$ and
$f_{-,e} \in {\rm BMO}_{\nu_{-,e}}(\mathbb R^n)$.
To obtain the upper bound
in Theorem \ref{t:upp-low Neum Lap wh-sp commu}, we make good use of the structure of reflection which allows us to go back to the classical Riesz transform, and hence the upper bound follows from the result in \cite{HLW}.
To obtain the lower bound, we use the fundamental technique of Fourier expansions (studied in \cite{CRW}, \cite{J}), and the structure of the odd and even extension.


This paper is organized as follows.  In Section \ref{s:Pre}, we 
collect the basic facts related to the Neumann Laplacian $\Delta_{N_+}$ and the reflection Neumann Laplacian  $\Delta_{N}$, and their related Riesz transforms.
In Section \ref{s:NeumannAp},  we introduce the class of weights $A^p_{\Delta_N}(\mathbb R^n)$
for $p\in[1, \infty)$ associated with $\Delta_N$, and provide the proofs of Theorems \ref{t-Ap Nuem char Riesz trans} and \ref{t-Ap Neumm log-exp brid}.
In Section \ref{s:BloomBMOHardy}, we show that for any $w\in A^p(\mathbb R^n)$ with
$p\in[1, 2]$ and for any $\beta\geq0$, the dual space of $H^{1,p,\beta}_{w}(\mathbb R^n)$ is
${\rm BMO}_{w}(\mathbb{R}^n)$ (Theorem \ref{t:dual BMO cla}).
In Section \ref{s:BloomBMOHardyNuemann}, we prove that for  $w\in A^p(\mathbb R^n)$ with $1<p\leq2$,
$H^1_{w}(\mathbb{R}^n)$
is  equivalent to
$H^1_{\Delta,w}(\mathbb{R}^n)$, and then we further obtain that ${\rm BMO}_{w}(\mathbb{R}^n)$
and ${\rm BMO}_{\Delta,w}(\mathbb{R}^n)$ coincide.
In Section \ref{s:BloomBMOHardyLaplace}, we provide the characterisation of ${\rm BMO}_{\Delta_N,\nu}(\mathbb R^n)$.
Section \ref{s:upp-low Neum Lap commu} is devoted the proofs of Theorems \ref{t:upp-low Neum Lap ha-sp commu} and
\ref{t:upp-low Neum Lap wh-sp commu}.
In Section \ref{s:Dirichlet} we study the property of the Dirichlet Laplacian on $\R^n_+$ and its corresponding Riesz transform, and then present the proof of Theorem \ref{t:Dirich commuta counterex}.


\section{Preliminaries}
\setcounter{equation}{0}
\label{s:Pre}

We now recall some notation and basic facts introduced in \cite{DDSY}*{Section 2}. For any subset $A\subset \mathbb{R}^n$ and a function $f: \mathbb{R}^n\rightarrow\mathbb{C}$ by $f|_A$ we denote the restriction of $f$ to $A$.   Next we set $\mathbb{R}^n_+=\{(x',x_n)\in \mathbb{R}^n: x'=(x_1,\ldots,x_{n-1})\in \mathbb{R}^{n-1},x_n>0\}$.
For any function $f$ on
$\mathbb{R}^n$, we set
$$ f_+=f|_{\mathbb{R}^n_+}\ \ \ {\rm and}\ \ \ f_-=f|_{\mathbb{R}^n_-}. $$
 For any $x=(x',x_n)\in\mathbb{R}^n$ we set $\widetilde{x}=(x',-x_n)$. If $f$ is any function defined on $\mathbb{R}^n_+$, its even extension defined on $\mathbb{R}^n$ is
\begin{align}\label{f e}
f_e(x)=f(x),\ {\rm if }\ x\in \mathbb{R}^n_+;\ \ f_e(x)=f(\widetilde{x}),\ {\rm if }\ x\in \mathbb{R}^n_-.
\end{align}

\subsection{The Neumann Laplacian}
\label{s:Neumann}

We denote by $\Delta_n$ the Laplacian on $\mathbb{R}^n$. Next we recall the Neumann Laplacian on $\mathbb{R}^n_+$ and $\mathbb{R}^n_-$.

Consider the Neumann problem on the half line $(0,\infty)$ (see \cite{S}*{(7), page 59 in Section 3.1}):
\begin{align}\label{Neumann}
\left\{
\begin{array}{lcc}
 w_t-w_{xx}  =0 & {\rm for\ }  0<x<\infty, 0<t<\infty, \\
 w(x,0)=\phi(x), &  \\
 w_x(0,t)=0. &
\end{array}
\right.
\end{align}
Denote this corresponding Laplacian by $\Delta_{1,N_+}$. According to \cite{S}*{(7), Section 3.1}, we see that
$ w(x,t) = e^{-t\Delta_{1,N_+}}(\phi)(x). $
 For $n>1$, we write $\mathbb{R}_+^n= \mathbb{R}^{n-1}\times \mathbb{R}_+$. And we define the Neumann Laplacian on $\mathbb{R}^n_+$ by
$$ \Delta_{n,N_+} = \Delta_{n-1} + \Delta_{1,N_+}, $$
where $\Delta_{n-1}$ is the  Laplacian on $\mathbb{R}^{n-1}$. 
Similarly we can define Neumann Laplacian  $\Delta_{n,N_-}$ on $\mathbb{R}^n_-$.




In the remainder of the paper, we omit the index $n$,   we denote by $\Delta$ the Laplacian on $\mathbb{R}^n$, denote the Neumann Laplacian on $\mathbb{R}^n_+$ by $\Delta_{N_+}$, and
Neumann Laplacian on $\mathbb{R}^n_-$ by~$\Delta_{N_-}$.


The Laplacian and Neumann Laplacian $\Delta_{N_{\pm}}$ are positive definite self-adjoint operators. By the spectral theorem one can define the semigroups generated by these operators $\{\exp(-t\Delta), t\geq 0\}$ and $\{\exp(-t\Delta_{N_\pm}), t\geq 0\}$. By $p_t(x,y)$, $p_{t,\Delta_{N_+}}(x,y)$ and $p_{t,\Delta_{N_-}}(x,y)$ we denote the heat kernels corresponding to the semigroups generated by $\Delta$, $\Delta_{N_+}$ and $\Delta_{N_-}$,
respectively.  Then we have
\begin{eqnarray}\label{heat ker classical}
p_{t}(x,y) = \frac{1}{(4\pi t)^{\frac{n}{2}}}e^{-\frac{|x-y|^2}{4t}}.
\end{eqnarray}
From the reflection method (see \cite{S}*{(9), page 60 in Section 3.1}), we get
\begin{eqnarray*}
p_{t,\Delta_{N_+}}(x,y)  = \frac{1}{(4\pi t)^{\frac{n}{2}}}e^{-\frac{|x'-y'|^2}{4t}}\left( e^{-\frac{|x_n-y_n|^2}{4t}}+e^{-\frac{|x_n+y_n|^2}{4t}} \right), \ \ x,y\in \mathbb{R}^n_+;\\
p_{t,\Delta_{N_-}}(x,y) = \frac{1}{(4\pi t)^{\frac{n}{2}}}e^{-\frac{|x'-y'|^2}{4t}}\left( e^{-\frac{|x_n-y_n|^2}{4t}}+e^{-\frac{|x_n+y_n|^2}{4t}} \right), \ \ x,y\in \mathbb{R}^n_-.
\end{eqnarray*}

For any function $f$ on $\mathbb{R}^n_+$, we have
\begin{align}\label{e:identity semigroup}
\exp(-t\Delta_{N_+})f(x)=\exp(-t\Delta)f_e(x)
 \end{align}
 for all $t\geq0$ and $x\in \mathbb{R}^n_+$. Similarly, for any function $f$ on $\mathbb{R}^n_-$, $\exp(-t\Delta_{N_-})f(x)=\exp(-t\Delta)f_e(x)$ for all $t\geq0$ and $x\in \mathbb{R}^n_-$.

Now let $\Delta_N$ be the uniquely determined unbounded operator acting on $L^2(\mathbb{R}^n)$ such that
\begin{align}\label{Delta N}
 (\Delta_Nf)_+=\Delta_{N_+}f_+ \ \ \ {\rm and}\ \ \ (\Delta_Nf)_-=\Delta_{N_-}f_-
\end{align}
for all $f: \mathbb{R}^n\rightarrow \mathbb{R}$ such that $f_+\in W^{1,2}(\mathbb{R}^n_+)$ and $f_-\in W^{1,2}(\mathbb{R}^n_-)$. Then
$\Delta_N$ is a positive self-adjoint operator and
\begin{align}\label{Delta N-exp}
\ (\exp(-t\Delta_N)f)_+=\exp(-t\Delta_{N_+})f_+\ \ \ {\rm and}\ \ \
(\exp(-t\Delta_N)f)_-=\exp(-t\Delta_{N_-})f_-.
\end{align}
The heat kernel of $\exp(-t\Delta_N)$, denoted by $p_{t,\Delta_N}(x,y)$, is then given as:
\begin{eqnarray}
\label{def-of-ptN}
p_{t,\Delta_N}(x,y)=\frac{1}{(4\pi t)^{\frac{n}{2}}}e^{-\frac{|x'-y'|^2}{4t}}\big( e^{-\frac{|x_n-y_n|^2}{4t}}+e^{-\frac{|x_n+y_n|^2}{4t}} \big)H(x_ny_n),
\end{eqnarray}
where $H: \mathbb{R}\rightarrow\{0,1\}$ is the Heaviside function given by
\begin{equation}
\label{e:hvyside}
H(t)=0,\ \ {\rm if}\ t<0;\ \ \ \ \ H(t)=1,\ \ {\rm if}\ t\geq0.
\end{equation}

Let us note that

\begin{itemize}
\item[$(\alpha)$] All the operators $\Delta, \Delta_{N_+}, \Delta_{N_-}, $ and  $\Delta_{N}$ are self-adjoint and they generate bounded analytic positive semigroups acting on all $L^p(\mathbb{R}^n)$ spaces for $1\leq p\leq \infty$;

\item[$(\beta)$] Suppose that $p_{t,L}(x,y)$ is the kernel corresponding to the semigroup generated by one of the operators $L$ listed in $(\alpha)$. Then the kernel $p_{t,L}(x,y)$ satisfies Gaussian bounds:
\begin{align}
\label{size of Neumann heat kernel}
 |p_{t,L}(x,y)|\leq \frac{C}{t^{\frac{n}{2}}} e^{-c\frac{|x-y|^2}{t}},
\end{align}
for all $x,y\in \Omega$, where $\Omega=\mathbb{R}^n$ for $\Delta, \Delta_{N}$; $\Omega=\mathbb{R}^n_+$ for $\Delta_{N_+}$ and $\Omega=\mathbb{R}^n_-$ for $\Delta_{N_-}$.
\end{itemize}

Next we consider the smoothness property of the heat kernel for $\Delta_N$, $\Delta_{N_+}$, and $\Delta_{N_-}$.

\begin{prop}[\cite{LW}] Suppose that $L$ is one of the operators $ \Delta_{N_+}$, $ \Delta_{N_-}$ and $\Delta_{N}$. Then for $x,x',y\in\mathbb{R}^n_+$ $($or $x,x',y\in\mathbb{R}^n_-$$)$ with
 $|x-x'|\leq \frac{1}{2}|x-y|$, we have
\begin{align}\label{smooth x}
|p_{t,L}(x,y)- p_{t,L}(x',y)| \leq C\frac{|x-x'|}{(\sqrt{t}+|x-y|)}\frac{\sqrt{t}}{(\sqrt{t}+|x-y|)^{n+1}};
\end{align}
symmetrically, for $x,y,y'\in\mathbb{R}^n_+$ $($or $x,x',y\in\mathbb{R}^n_-$$)$ with
$|y-y'|\leq \frac{1}{2}|x-y|$, we have
\begin{align}\label{smooth y}
|p_{t,L}(x,y)- p_{t,L}(x,y')| \leq C\frac{|y-y'|}{(\sqrt{t}+|x-y|)}\frac{\sqrt{t}}{(\sqrt{t}+|x-y|)^{n+1}}.
\end{align}
\end{prop}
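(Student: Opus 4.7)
The plan is to reduce the H\"older regularity of all three heat kernels to the classical smoothness estimate for the standard Gaussian heat kernel $p_t(x,y)$ on $\R^n$, using the explicit formulas for $p_{t,\Delta_{N_\pm}}$ and $p_{t,\Delta_N}$ already recorded in this section. The key observation is that each of these kernels is at most a sum of two reflected Gaussians, so the difference $p_{t,L}(x,y)-p_{t,L}(x',y)$ decomposes into at most two ordinary Gaussian differences, each of which can be handled by a single elementary lemma. By the symmetry of the explicit formulas in $x$ and $y$, it suffices to treat the $x,x'$ case \eqref{smooth x}; the $y,y'$ case \eqref{smooth y} follows by the identical argument with $\nabla_y$ in place of $\nabla_x$.

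First I would prove the model estimate
\begin{equation*}
|p_t(x,y)-p_t(x',y)| \leq \frac{C\,|x-x'|\,\sqrt{t}}{(\sqrt{t}+|x-y|)^{n+2}},\qquad |x-x'|\leq \tfrac{1}{2}|x-y|,
\end{equation*}
for the classical heat kernel on $\R^n$. This follows from the explicit gradient bound $|\nabla_x p_t(x,y)| \leq C\,t^{-(n+1)/2}e^{-c|x-y|^2/t}$ (by direct differentiation of \eqref{heat ker classical}), combined with the elementary polynomial-decay inequality $e^{-c|x-y|^2/t} \leq C_n\bigl(\sqrt{t}/(\sqrt{t}+|x-y|)\bigr)^{n+2}$ and the mean value theorem along the segment joining $x$ and $x'$. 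The hypothesis $|x-x'|\leq|x-y|/2$ ensures every intermediate point $\xi$ satisfies $|\xi-y|\geq|x-y|/2$, so the pointwise gradient bound is preserved along the segment.

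For $\Delta_{N_+}$ (the $\Delta_{N_-}$ case is identical), the formula recorded above gives $p_{t,\Delta_{N_+}}(x,y)=p_t(x,y)+p_t(x,\widetilde{y})$ with $\widetilde{y}=(y',-y_n)$. Applying the model estimate to each summand yields
\begin{equation*}
|p_{t,\Delta_{N_+}}(x,y)-p_{t,\Delta_{N_+}}(x',y)| \leq \frac{C|x-x'|\sqrt{t}}{(\sqrt{t}+|x-y|)^{n+2}} + \frac{C|x-x'|\sqrt{t}}{(\sqrt{t}+|x-\widetilde{y}|)^{n+2}}.
\end{equation*}
Since $x,y\in\R^n_+$ forces $|x-\widetilde{y}|^2=|x'-y'|^2+(x_n+y_n)^2\geq|x-y|^2$, the second term is dominated by the first, and the hypothesis $|x-x'|\leq|x-y|/2$ automatically implies $|x-x'|\leq|x-\widetilde{y}|/2$, so the model estimate indeed applies to the reflected summand. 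For $\Delta_N$, the Heaviside factor $H(x_ny_n)$ in \eqref{def-of-ptN} vanishes unless $x$ and $y$ lie in the same half-space; under the hypothesis that $x,x',y$ all lie in $\R^n_+$ (respectively $\R^n_-$), the three factors $H(x_ny_n)$, $H(x'_ny_n)$ equal one, so $p_{t,\Delta_N}$ coincides pointwise with $p_{t,\Delta_{N_\pm}}$ at all arguments used, and the claim follows from the preceding case.

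I do not expect a genuine obstacle here: the entire argument is driven by the model Gaussian estimate, which is standard. The only mild care required is in verifying that the reflection hypothesis $|x-x'|\leq|x-\widetilde{y}|/2$ is automatic from $|x-x'|\leq|x-y|/2$, and that intermediate points on the segment from $x$ to $x'$ need not lie in $\R^n_+$ since the model estimate is applied to the full-space kernel $p_t$, for which no boundary constraint appears.
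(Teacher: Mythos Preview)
Your argument is correct. The paper itself does not supply a proof of this proposition; it is quoted from \cite{LW}. Your reduction to the classical Gaussian via $p_{t,\Delta_{N_+}}(x,y)=p_t(x,y)+p_t(x,\widetilde{y})$, the mean-value estimate on $p_t$, and the observation $|x-\widetilde{y}|\ge|x-y|$ for $x,y\in\R^n_+$ is exactly the natural route and matches what one expects from the cited reference.
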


\subsection{The Riesz Kernels Associated to the Neumann Laplacian}

A fundamental object in our study are the Riesz transforms associated to the Neumann Laplacian.  Recall that the Riesz transforms associated to the Neumann Laplacian are given by: $R_N= \nabla \Delta_N^{-\frac{1}{2}}$.
We collect the formula for these kernels in the following proposition.

\begin{prop}
\label{p:RieszKernel}
Denote 
by $R_{N,j}(x,y)$ the kernel of the $j$-th Riesz transform $\frac{\partial}{\partial x_j} \Delta_N^{-\frac{1}{2}}$ of $\Delta_N$.  Then for $1\leq j\leq n-1$ and for $x,y\in\mathbb{R}^n_+$ (or $x,y\in\mathbb{R}^n_-$) we have:
$$
R_{N,j}(x,y)= - C_n \bigg( \frac{x_j-y_j}{|x-y|^{n+1}} +   \frac{x_j-y_j}{(|x'-y'|^2+|x_n+y_n|^2)^{\frac{n+1}{2}}}\bigg);
$$
and for $j=n$ and for $x,y\in\mathbb{R}^{n}_+$ (or $x,y\in\mathbb{R}^n_-$) we have:
$$
R_{N,n}(x,y)= - C_n \bigg( \frac{x_j-y_j}{|x-y|^{n+1}} +   \frac{x_n+y_n}{(|x'-y'|^2+|x_n+y_n|^2)^{\frac{n+1}{2}}}\bigg),
$$
where $C_n=\frac{\Gamma\big(\frac{n+1}{2}\big)}{(\pi )^{\frac{n+1}{2}}}  $.
\end{prop}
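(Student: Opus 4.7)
My plan is to compute $R_{N,j}(x,y)$ directly from the subordination formula
\begin{align*}
\Delta_N^{-1/2} = \frac{1}{\sqrt{\pi}} \int_0^\infty e^{-t\Delta_N}\,\frac{dt}{\sqrt{t}},
\end{align*}
which together with \eqref{def-of-ptN} yields the kernel representation $R_{N,j}(x,y) = \pi^{-1/2}\int_0^\infty \partial_{x_j} p_{t,\Delta_N}(x,y)\, t^{-1/2}\,dt$. Since $x$ and $y$ lie in the same open half-space, the Heaviside factor $H(x_n y_n)$ is locally constant equal to $1$, so differentiation in $x_j$ passes through both $H$ and the integral and only acts on the two Gaussians appearing in \eqref{def-of-ptN}.

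For $1 \leq j \leq n-1$ the $x_j$-derivative touches only the transverse factor $e^{-|x'-y'|^2/(4t)}$ and produces the common prefactor $-(x_j-y_j)/(2t)$ multiplying the full Neumann heat kernel. For $j = n$ it hits each of the two Gaussians in the $n$-th variable separately, producing $-(x_n-y_n)/(2t)$ on the first summand and $-(x_n+y_n)/(2t)$ on the second; this is precisely the source of the asymmetric structure of the stated formula. Once these derivatives are in hand, I would apply Fubini, justified by the Gaussian estimates \eqref{size of Neumann heat kernel} and the gradient estimate \eqref{smooth x}, and evaluate the resulting time integrals via the standard identity $\int_0^\infty t^{-\alpha-1} e^{-a/t}\,dt = \Gamma(\alpha)\,a^{-\alpha}$ applied with $\alpha = (n+1)/2$, with $a = |x-y|^2/4$ for the ``diagonal'' Gaussian and $a = (|x'-y'|^2+|x_n+y_n|^2)/4$ for the ``reflected'' one.

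Collecting the factor $\pi^{-1/2}$ from subordination, the prefactor $(4\pi)^{-n/2}$ from the heat kernel, and the $4^{(n+1)/2}\Gamma((n+1)/2)$ produced by evaluating each Gamma integral, the overall constant simplifies to $\Gamma((n+1)/2)/\pi^{(n+1)/2} = C_n$, giving exactly the two formulas claimed. The case $x,y \in \mathbb{R}^n_-$ requires no separate argument: the heat kernel formula in that region is formally identical in $(x',y',x_n,y_n)$ and the computation is unchanged. I do not expect any serious obstacle; the only points requiring a small amount of care are the interchange of $\partial_{x_j}$ with the time integral and the Fubini step, both of which follow routinely from the pointwise Gaussian and gradient bounds recalled in the previous subsection.
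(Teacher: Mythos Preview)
Your proposal is correct: the subordination formula gives the kernel of $\Delta_N^{-1/2}$ as $\pi^{-1/2}\int_0^\infty p_{t,\Delta_N}(x,y)\,t^{-1/2}\,dt$, differentiating under the integral in $x_j$ is justified by the Gaussian decay, and the Gamma integral $\int_0^\infty t^{-(n+3)/2}e^{-a/t}\,dt=\Gamma\big(\tfrac{n+1}{2}\big)a^{-(n+1)/2}$ with $a=|x-y|^2/4$ (respectively $a=(|x'-y'|^2+|x_n+y_n|^2)/4$) collapses the constant to exactly $C_n=\Gamma\big(\tfrac{n+1}{2}\big)\pi^{-(n+1)/2}$. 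Your case split between $j\le n-1$ and $j=n$ is also correct and accounts for why $x_n+y_n$ rather than $x_j-y_j$ appears in the reflected term when $j=n$.

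As for comparison with the paper: the paper does not prove this proposition at all---it is simply stated in Section~2.2 as a known computation (the surrounding results are attributed to \cite{LW}). So there is no ``paper's own proof'' to compare against; your direct computation via subordination is precisely the standard route one would take to verify such a formula, and it goes through without difficulty.
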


The kernels $R_{N,j}(x,y)$ are Calder\'on--Zygmund kernels.
\begin{prop}[\cite{LW}]
Denote by $R_N(x,y)$ the kernel of the vector of Riesz transforms $\nabla \Delta_N^{-\frac{1}{2}}$.  Then:
\begin{align}\label{Riesz kernel}
R_N(x,y)=\big( R_{N,1}(x,y),\ldots, R_{N,n}(x,y) \big)H(x_ny_n),
\end{align}
with $H(t)$ the Heavyside function defined in \eqref{e:hvyside}.  Moreover, we have that for $x\not=y$
\begin{align*}
|R_{N}(x,y)|\leq C_n  \frac{1}{|x-y|^{n}};
\end{align*}
and that for $x,x_0,y\in\mathbb{R}^n_+$ $($or $x,x_0,y\in\mathbb{R}^n_-)$
  with $|x-x_0|  \leq \frac{1}{2} |x-y|$,
\begin{align*}
|R_{N}(x,y)-R_{N}(x_0,y)|+|R_{N}(y,x)-R_{N}(y,x_0)|\leq C\frac{|x-x_0|}{|x-y|^{n+1}}.
\end{align*}
\end{prop}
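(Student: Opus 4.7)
The plan is to deduce the three assertions---(i) the Heaviside factorization in \eqref{Riesz kernel}, (ii) the pointwise size bound, and (iii) the Calder\'on--Zygmund smoothness estimate---directly from the component formulas in Proposition \ref{p:RieszKernel}, with the reflection trick $y \mapsto \tilde y = (y',-y_n)$ playing a central role.

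Assertion (i) is a structural consequence of \eqref{Delta N} and \eqref{Delta N-exp}: since $\Delta_N$ splits as $\Delta_{N_+} \oplus \Delta_{N_-}$ with respect to the decomposition $L^2(\mathbb{R}^n) = L^2(\mathbb{R}^n_+) \oplus L^2(\mathbb{R}^n_-)$, the same holds for $\exp(-t\Delta_N)$, for the subordinated operator $\Delta_N^{-1/2} = \pi^{-1/2}\int_0^\infty t^{-1/2} \exp(-t\Delta_N)\,dt$, and hence for $R_N = \nabla \Delta_N^{-1/2}$. In particular the kernel of $R_N$ vanishes whenever $x_n y_n < 0$, which produces the factor $H(x_n y_n)$.

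For (ii), I would bound each of the two summands appearing in a given component $R_{N,j}(x,y)$ separately. The first summand is the ordinary Euclidean Riesz kernel and is trivially controlled by $C_n/|x-y|^n$. The second (reflected) summand involves $\tilde y = (y',-y_n)$, and on the support $x_n y_n \geq 0$ the elementary identity
\[
|x - \tilde y|^2 = |x' - y'|^2 + (x_n + y_n)^2 = |x-y|^2 + 4 x_n y_n \geq |x-y|^2
\]
gives $|x-\tilde y| \geq |x-y|$, which immediately bounds the reflected term by $C_n/|x-y|^n$. Summing over the components yields (ii).

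For (iii), the same splitting reduces the smoothness estimate to two invocations of the classical Riesz-kernel H\"older estimate. The first summand is standard. For the reflected summand, the hypothesis $|x - x_0| \leq \tfrac12 |x - y|$ combined with $|x - \tilde y| \geq |x-y|$ gives $|x - x_0| \leq \tfrac12 |x - \tilde y|$, so the standard H\"older estimate applied at the reflected point produces $C|x-x_0|/|x-\tilde y|^{n+1} \leq C|x-x_0|/|x-y|^{n+1}$. The companion inequality in the $y$-variable follows for $1 \leq j \leq n-1$ from the antisymmetry $R_{N,j}(x,y) = -R_{N,j}(y,x)$ that is visible in the explicit formula; for $j = n$ the antisymmetry fails and the estimate must be obtained by a direct mean value computation on $(z_n + y_n)/(|z'-y'|^2 + (z_n+y_n)^2)^{(n+1)/2}$ viewed as a function of $y$. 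This $j = n$ case is the main technical wrinkle, but once the reflection inequality $|x - \tilde y| \geq |x-y|$ is in hand it reduces to elementary calculus on the reflected point.
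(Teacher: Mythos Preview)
Your proposal is correct. The paper does not actually supply a proof of this proposition: it is quoted verbatim from \cite{LW}, so there is no in-paper argument to compare against. Your outline---deriving the Heaviside factor from the direct-sum structure \eqref{Delta N}--\eqref{Delta N-exp}, then using the reflection inequality $|x-\tilde y|\ge |x-y|$ (valid when $x_ny_n\ge 0$) to reduce both the size and smoothness bounds for the reflected summand to the classical Riesz-kernel estimates---is exactly the natural route and would be accepted.

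One minor simplification: your caveat about $j=n$ is unnecessary. For every $j$ the reflected summand equals the classical kernel component $K_j(x-\tilde y)=(x-\tilde y)_j/|x-\tilde y|^{n+1}$, and since $|x-\tilde y|=|y-\tilde x|$ one checks that $K_j(y-\tilde x)=\pm K_j(x-\tilde y)$ (minus for $j<n$, plus for $j=n$). In either case $|K_j(y-\tilde x)-K_j(y-\tilde x_0)|=|K_j(x-\tilde y)-K_j(x_0-\tilde y)|$, so smoothness in the second variable follows immediately from the first-variable estimate you already established, with no separate mean-value computation needed for $j=n$.
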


\section{\texorpdfstring{Muckenhoupt weights associated with the Neumann Laplacian $\Delta_N$}{Muckenhoupt weights associated with the Neumann Laplacian}}
\setcounter{equation}{0}
\label{s:NeumannAp}

In this section, we introduce and study a class of weights associated with
$\Delta_N$. To this end,
we first recall the classical Muckenhoupt $A^p$ weights on $\mathbb R^n$.


\begin{definition}
Suppose $w\in L^1_{loc}(\mathbb R^n)$, $w\geq0$, and $1<p<\infty$. We say that $w$ is a  Muckenhoupt $A^p(\mathbb R^n)$ weight
if there exists a constant $C$ such that
\begin{align} \label{Ap}
\sup_{Q} \left\langle w\right\rangle_Q   \left\langle w^{-{1\over p-1}}\right\rangle_Q^{p-1}\leq C<\infty,
\end{align}
where the supremum is taken over all cubes $Q$ in $\mathbb R^n$.
We denote by $[w]_{A^p}$ the smallest constant $C$ such that \eqref{Ap} holds.

The class $A^1(\mathbb R^n)$ consists of the weights $w$ satisfying for some $C > 0$ that
$$\left\langle w\right\rangle_Q \leq C {\rm ess}\inf_{x\in Q} w(x)$$
 for any $Q\subset \R^n$.
We denote by $[w]_{A^1}$ the smallest constant $C$ such that the above inequality holds.
\end{definition}


We now recall some basic properties of the Muckenhoupt $A^p(\mathbb R^n)$ weights.
If $w\in A^p(\mathbb R^n)$ with $p>1$, then the ``conjugate'' weight
\begin{align}\label{conjugate weight}
w'= w^{1-p'} \in A^{p'}(\mathbb R^n)
\end{align}
 with $[w']_{A^{p'}}= [w]_{A^p}^{p'-1}$, where $p'$ is the conjugate index of $p$, i.e., $1/p +1/p' =1$.
Moreover, suppose $\mu,\lambda\in A^p(\mathbb R^n)$ with $1<p<\infty$. Define
\begin{align}\label{Bloom weight}
\nu= \mu^{1\over p}\lambda^{-{1\over p}}.
\end{align}
Then we have that $\nu\in A^2(\mathbb R^n)$, see \cite{HLW}*{Lemma 2.19}. Moreover, we have the following
fundamental result (see \cite{HLW}*{equation (2.21)}): for any ball $B \subset \mathbb R^n$,
\begin{align}\label{Bloom weight2}
\Big({\mu(B)\over |B|}\Big)^{1\over p} \Big({\lambda'(B)\over |B|}\Big)^{{1\over p'}}\ls
{1\over  \Big({\mu'(B)\over |B|}\Big)^{1\over p'} \Big({\lambda(B)\over |B|}\Big)^{{1\over p}}}
\ls {1\over   {\nu^{-1}(B)\over |B|} } \ls {\nu(B)\over |B|}.
\end{align}

We now define the  Muckenhoupt weights associated with the Neumann Laplacian $\Delta_N$.
\begin{definition}\label{d-Ap Delta}
Suppose $w\in L^1_{loc}(\mathbb R^n)$, $w\geq0$, and $1<p<\infty$. We say that $w$ is a  Muckenhoupt weight associated with the Neumann Laplacian $\Delta_N$, denoted by $A^p_{\Delta_N}(\mathbb R^n)$,
if both $w_{+,e}$ and $w_{-,e}$ are in classical $A^p(\mathbb R^n)$.
And we define $[w]_{A^p_{\Delta_N}} = [w_{+,e}]_{A^p}+[w_{-,e}]_{A^p}$.
\end{definition}

From Definition \ref{d-Ap Delta}, we first observe that the class $A^p(\mathbb R^n)$ of Muckenhoupt weights
is a proper subset of $A^p_{\Delta_N}(\mathbb R^n)$.

\begin{prop}\label{p-inclusion Ap and Ap Delta}
Suppose $1<p<\infty$. Then we have
$A^p(\mathbb R^n) \subsetneq A^p_{\Delta_N}(\mathbb R^n)$.

\end{prop}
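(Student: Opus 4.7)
The plan has two parts: showing the forward inclusion $A^p(\mathbb R^n) \subset A^p_{\Delta_N}(\mathbb R^n)$, and producing a weight witnessing strictness.

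For the forward inclusion, I take $w \in A^p(\mathbb R^n)$ and must check that both $w_{+,e}$ and $w_{-,e}$ lie in $A^p(\mathbb R^n)$; by symmetry it suffices to treat $w_{+,e}$. Fix a cube $Q \subset \mathbb R^n$ and analyze the $A^p$ ratio in three geometric cases. If $Q \subset \overline{\mathbb R^n_+}$, then $w_{+,e}|_Q = w|_Q$ and the $A^p$ ratio on $Q$ is bounded by $[w]_{A^p}$ directly. If $Q \subset \overline{\mathbb R^n_-}$, the change of variable $x \mapsto \widetilde x$ gives $\langle w_{+,e}^k \rangle_Q = \langle w^k \rangle_{\widetilde Q}$ for $k \in \{1, -1/(p-1)\}$, so the $A^p$ bound for $w$ on the reflected cube $\widetilde Q$ transfers. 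If $Q$ crosses the hyperplane $\{x_n = 0\}$, write $Q = Q' \times [a,b]$ with $a < 0 < b$ and, without loss of generality, $b \geq -a$, so that $\ell/2 \leq b \leq \ell$, where $\ell$ is the sidelength of $Q$. I enlarge to a symmetric cube $Q^\sharp$ of sidelength $2b$ centered on $\{x_n = 0\}$ containing $Q \cup \widetilde Q$; then $|Q^\sharp| \leq 2^n |Q|$. Using evenness of $w_{+,e}$ in $x_n$, one gets $\int_{Q^\sharp} w_{+,e} = 2 \int_{Q^\sharp \cap \mathbb R^n_+} w$, and similarly for $w_{+,e}^{-1/(p-1)}$. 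Since $Q^\sharp \cap \mathbb R^n_+$ is a rectangle of aspect ratio at most $2$ lying in $\overline{\mathbb R^n_+}$, the classical $A^p$ condition for $w$ (equivalent on cubes and on rectangles of bounded aspect ratio, up to a dimensional constant) yields $[w_{+,e}]_{A^p} \lesssim [w]_{A^p}$, and symmetrically for $w_{-,e}$.

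For the strict inclusion I will produce an explicit counterexample. Fix $a \in (0, p-1)$ and set
\[ w(x) = |x_n|^a \, \unit_{\mathbb R^n_+}(x) + \unit_{\mathbb R^n_-}(x). \]
Direct computation of the even extensions gives $w_{+,e}(x) = |x_n|^a$ and $w_{-,e}(x) \equiv 1$, both classical $A^p(\mathbb R^n)$ weights since $-1 < a < p-1$; hence $w \in A^p_{\Delta_N}(\mathbb R^n)$. Testing the classical $A^p$ ratio on the symmetric cube $Q_r = [-r,r]^n$ and computing the one-dimensional $x_n$-integrals shows that, as $r \to 0^+$,
\[ \langle w \rangle_{Q_r} \to \tfrac12, \qquad \langle w^{-1/(p-1)} \rangle_{Q_r} \sim r^{-a/(p-1)}, \]
so the product $\langle w \rangle_{Q_r} \langle w^{-1/(p-1)} \rangle_{Q_r}^{p-1}$ diverges and $w \notin A^p(\mathbb R^n)$, yielding strictness.

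The only delicate step is the crossing-cube case in the forward inclusion, where the symmetrized region is a rectangle with bounded-but-non-unit aspect ratio; one invokes the standard equivalence of the $A^p$ condition on cubes and on rectangles of uniformly bounded aspect ratio to close the argument with a constant depending only on $n$ and $p$.
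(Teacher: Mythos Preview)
Your proof is correct and follows essentially the same approach as the paper. For the forward inclusion you supply a careful case analysis (the paper simply asserts $[w_{+,e}]_{A^p}+[w_{-,e}]_{A^p}\le C[w]_{A^p}$ without details), and for strictness you use the identical weight $w(x)=|x_n|^a\unit_{\mathbb R^n_+}+\unit_{\mathbb R^n_-}$ with $a\in(0,p-1)$; the only cosmetic difference is that you test the $A^p$ ratio on $Q_r=[-r,r]^n$ as $r\to 0^+$ (where $\langle w^{-1/(p-1)}\rangle_{Q_r}$ blows up), whereas the paper lets $r\to\infty$ (where $\langle w\rangle_{Q_r}$ blows up)---either limit works.
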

\begin{proof}
Suppose $1<p<\infty$ and $w\in A^p(\mathbb R^n)$.
By definition, it is direct that both $w_{+,e}$ and $w_{-,e}$
are in $A^p(\mathbb R^n)$, with
$$ [w_{+,e}]_{A^p}+ [w_{-,e}]_{A^p} \leq C [w]_{A^p}. $$
Hence, we obtain that
$w\in A^p_{\Delta_N}(\mathbb R^n)$, which shows that $A^p (\mathbb R^n)\subset A^p_{\Delta_N}(\mathbb R^n)$.

Next, for any fixed $p\in (1,\infty)$, we choose the function $w(x)$ as follows.  Let $\alpha\in (0, \frac{p}{p'})$.
For $x=(x_1,\ldots,x_{n-1},{x_n})\in\mathbb R^n $, define
\begin{equation}\label{e-example Ap Delta}
    w(x)=\left\{
                \begin{array}{ll}
                  x_n^{\alpha},& \quad x_n>0;\\[5pt]
                  1,& \quad x_n<0.\\
                                 \end{array}
              \right.
\end{equation}

Then it is clear that $w(x)$ is not in classical $A^p(\mathbb R^n)$. In
fact, choose the cube $Q_a=[-a, a]^n$ with $a>1$. Then
we have
$$ \left\langle w\right\rangle_{Q_a}   \left\langle w^{-{1\over p-1}}\right\rangle_{Q_a}^{p-1}\sim a^\alpha\to\infty
$$
as $a\to \infty$.

However, both $w_{+,e}(x)$ and $w_{-,e}(x)$ are in $A^p(\mathbb R^n)$.
As a consequence, we have that $w(x)\in A^p_{\Delta_N}(\mathbb R^n)$, which shows that
$A^p(\mathbb R^n) \subsetneq A^p_{\Delta_N}(\mathbb R^n).$
\end{proof}

\begin{remark}\label{r-Ap Delta nondoubl}
From the example given in \eqref{e-example Ap Delta}, we can
further see that a weight $w\in A^p_{\Delta_N}(\mathbb R^n)$ might not satisfy the so-called
 doubling condition, that is, there exists a positive constant
 $C_0$ such that for any cube $Q\subset \mathbb R^n$,
 $w(2Q)\le C_0 w(Q).$

 In fact, let $w$ be as in \eqref{e-example Ap Delta} with
 $\alpha=\frac12$. Choose $b\in(0, \frac19)$ and $Q_b=[-\frac{5b}{16}, \frac{5b}{16}]^{n-1}\times[\frac{b}{16}, \frac{11b}{16}]$.
 Then we see that $2Q_b=[-\frac{5b}{8}, \frac{5b}{8}]^{n-1}\times[-\frac b4, b]$,
 and
 $$w(Q_b)\sim b^{n+\frac12},\,\, w(2Q_b)\sim b^n,$$
 which implies that $w$ is non-doubling.
 \end{remark}

\begin{theorem}\label{t-Ap Nuem char Riesz trans}
Suppose $w$ is a positive locally integrable function and $1<p<\infty$. Then for every $l\in\{1,\ldots,n\}$, $w\in A^p_{\Delta_N}(\mathbb R^n)$
if and only if there exits a positive constant $C$ such that for all $f\in L^{p}_{w}(\mathbb R^n)$,
\begin{align}\label{Tw}
 \| R_{N,l}(f)\|_{L^{p}_{w}(\mathbb R^n)}\leq C \|f\|_{L^{p}_{w}(\mathbb R^n)}.
\end{align}
\end{theorem}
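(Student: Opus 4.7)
The plan is to exploit the reflection structure of the Neumann Laplacian to reduce the statement to the classical $A^p$ theorem for the Riesz transform $R_l=\partial_l\Delta^{-1/2}$. The key step is an intertwining identity: for any $f$ defined on $\mathbb R^n_+$,
\begin{equation*}
R_{N,l}f(x)=R_l(f_e)(x),\qquad x\in\mathbb R^n_+,
\end{equation*}
with a symmetric identity on $\mathbb R^n_-$. This is verified directly from Proposition \ref{p:RieszKernel} by splitting the integral defining $R_l(f_e)(x)$ over $\mathbb R^n_+$ and $\mathbb R^n_-$ and making the change of variable $y\mapsto\widetilde y$ in the second piece: the identity $f_e(\widetilde y)=f(y)$ together with $y_n\mapsto-y_n$ (which converts $x_n-y_n$ into $x_n+y_n$ for the $l=n$ case) reproduces exactly the two-term Neumann kernel. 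Moreover, the factor $H(x_ny_n)$ in the kernel of $R_{N,l}$ decouples the operator across $\{x_n=0\}$, so
\begin{equation*}
\|R_{N,l}f\|_{L^p_w(\mathbb R^n)}^p=\|R_{N,l}f_+\|_{L^p_{w_+}(\mathbb R^n_+)}^p+\|R_{N,l}f_-\|_{L^p_{w_-}(\mathbb R^n_-)}^p.
\end{equation*}

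For the sufficiency direction, assume $w\in A^p_{\Delta_N}(\mathbb R^n)$, so $w_{\pm,e}\in A^p(\mathbb R^n)$. A short check shows that $R_l$ applied to an even-in-$x_n$ function is even (when $l<n$) or odd (when $l=n$) in $x_n$, so $|R_l(f_+)_e|^pw_{+,e}$ is even in either case, and hence
\begin{equation*}
\|R_{N,l}f_+\|_{L^p_{w_+}(\mathbb R^n_+)}^p=\tfrac{1}{2}\|R_l(f_+)_e\|_{L^p_{w_{+,e}}(\mathbb R^n)}^p.
\end{equation*}
The classical weighted $L^p$ bound for $R_l$ under $w_{+,e}\in A^p$, combined with $\|(f_+)_e\|_{L^p_{w_{+,e}}(\mathbb R^n)}^p=2\|f_+\|_{L^p_{w_+}(\mathbb R^n_+)}^p$, yields \eqref{Tw} for $f_+$; the symmetric estimate on $\mathbb R^n_-$ finishes the inequality.

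For the necessity direction, assume \eqref{Tw} holds. The decoupling above makes $R_{N,l}$ bounded on $L^p_{w_+}(\mathbb R^n_+)$, and I will run the standard ``testing argument'' for extracting $A^p$ from the boundedness of a Calder\'on--Zygmund operator. For a cube $Q\subset\mathbb R^n_+$, set $\sigma:=w^{1-p'}$ and test on $f:=\sigma\chi_Q$, noting that $\|f\|_{L^p_w}^p=\sigma(Q)$. Choose an adjacent cube $Q^*\subset\mathbb R^n_+$ of comparable size, positioned so that $x_l-y_l$ has constant sign for all $x\in Q^*$, $y\in Q$ (and for $l=n$, take $Q^*$ directly above $Q$, which forces $x_n>y_n>0$ and thus $x_n+y_n>0$). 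With this choice, the main kernel $\frac{x_l-y_l}{|x-y|^{n+1}}$ and the reflected kernel ($\frac{x_l-y_l}{|x-\widetilde y|^{n+1}}$ for $l<n$, respectively $\frac{x_n+y_n}{|x-\widetilde y|^{n+1}}$ for $l=n$) carry the same sign, so they reinforce rather than cancel, yielding $|R_{N,l}f(x)|\gtrsim \sigma(Q)/|Q|$ on $Q^*$. The boundedness hypothesis then gives $\sigma(Q)^{p-1}w(Q^*)/|Q|^p\le C$, and a dual testing against the adjoint $R_{N,l}^*$ (whose kernel has the same two-term structure) supplies the symmetric inequality with the roles of $Q$ and $Q^*$ interchanged; combining the two produces the $A^p$ bound on $Q$. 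A standard covering argument using the evenness of $w_{+,e}$ promotes this to $[w_{+,e}]_{A^p}<\infty$, and the parallel argument on $\mathbb R^n_-$ gives $[w_{-,e}]_{A^p}<\infty$, as required.

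The main obstacle lies in the necessity direction: the two-term Neumann kernel could in principle cause destructive cancellation and spoil the classical pointwise lower bound that drives the testing argument. The resolution is the observation above that by placing the test cube $Q^*$ in an appropriate coordinate direction relative to $Q$, both the main and reflected kernel contributions carry the same sign, so the reflected term only strengthens the lower bound and the classical machinery goes through.
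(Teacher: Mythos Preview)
Your proposal is correct and follows essentially the same route as the paper: reduce sufficiency to the classical weighted bound for $R_l$ via the intertwining identity $R_{N,l}f=R_l(f_e)$ on each half-space, and for necessity run the standard testing argument on a translate $Q^*$ of $Q$, the key point being that the main and reflected kernel terms carry the same sign once $Q^*$ is placed in the $l$-th coordinate direction (and, for $l=n$, away from the boundary).

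The only notable difference is in the bookkeeping of the necessity step. The paper first proves the inequality $\langle f\rangle_Q^p\le C\,w_+(Q)^{-1}\int_Q f^p w_+$ for \emph{arbitrary} nonnegative $f$ supported in $Q$, obtains the two-sided comparability $w_+(Q)\approx w_+(Q')$ by taking $f=\unit_Q$ and $g=\unit_{Q'}$, and only then substitutes $f=w_+^{-1/(p-1)}\unit_Q$. Your version goes straight to $f=\sigma\unit_Q$ and invokes a dual test against $R_{N,l}^*$ for the symmetric inequality. Both are standard variants; the paper's ordering has the minor advantage that the a priori local integrability of $\sigma=w^{1-p'}$ is easier to extract via truncation from the general-$f$ inequality, whereas in your formulation you should strictly speaking insert a cutoff $\sigma_N=\min(\sigma,N)$ before testing. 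This is a routine technicality, not a gap.
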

\begin{proof}
Suppose $1<p<\infty$ and $w\in A^p_{\Delta_N}(\mathbb R^n)$. Then for $f\in L^{p}_{w}(\mathbb R^n)$ and $x\in\mathbb R^{n}_+$,
\begin{align*}
R_N(f)(x)& = \int_{\mathbb{R}^n} R_N(x,y) f(y)\,dy = \int_{\mathbb{R}^n_+} R_N(x,y) f_+(y)\,dy
  =    \int_{\mathbb{R}^n} R(x,y) f_{+,e}(y)\,dy\\
  &=  \nabla\Delta^{-\frac{1}{2}} f_{+,e}(x),
\end{align*}
where $R(x, y)$ is the kernel of $\nabla\Delta^{-\frac{1}{2}}$.
Similarly, for $x\in\mathbb R^{n}_-$, we also have
$R_N(f)(x) =\nabla\Delta^{-\frac{1}{2}} f_{-,e}(x).$  Hence,
\begin{align*}
&\int_{\mathbb R^n}|R_N(f)(x)|^p\, w(x)dx\\
&\quad = \int_{\mathbb R_{+}^n}|R_N(f)(x)|^p\, w(x)dx+\int_{\mathbb R_{-}^n}|R_N(f)(x)|^p\, w(x)dx\\
  &\quad= \int_{ R_{+}^n}  |\nabla\Delta^{-\frac{1}{2}} f_{+,e}(x)|^p w_{+,e}(x)dx+ \int_{\mathbb R_{-}^n} | \nabla\Delta^{-\frac{1}{2}} f_{-,e}(x)|^p w_{-,e}(x)dx\\
  &\quad\leq \int_{\mathbb R^n}  |\nabla\Delta^{-\frac{1}{2}} f_{+,e}(x)|^p w_{+,e}(x)dx+ \int_{\mathbb R^n} | \nabla\Delta^{-\frac{1}{2}} f_{-,e}(x)|^p w_{-,e}(x)dx\\
  &\quad\leq C\|f_{+,e}\|^p_{L^p_{w_{+,e}}(\mathbb R^n)}
+C\|f_{-,e}\|^p_{L^p_{w_{-,e}}(\mathbb R^n)}\\
&\quad\leq C\|f\|_{L^p_w(\mathbb R^n)},
\end{align*}
which implies that \eqref{Tw} holds.

Conversely, suppose  that \eqref{Tw} holds. Then for $f\in L^{p}_{w}(\mathbb R^n)$,
it is direct that $f_+\in L^{p}_{w}(\mathbb R^n)$. Hence, we have
\begin{align*}
&\int_{\mathbb R^n}|R_N(f_+)(x)|^p\, w(x)dx \leq C \|f_+\|_{L^{p}_{w}(\mathbb R^n)}^p.
\end{align*}
By noting that
\begin{align*}
\int_{\mathbb R^n}|R_N(f_+)(x)|^p\, w(x)dx&=\int_{\mathbb R_{+}^n}|R_N(f_+)(x)|^p\, w_+(x)dx
\end{align*}
and that
 $\|f_+\|_{L^{p}_{w}(\mathbb R^n)}^p=  \|f_{+}\|_{L^{p}_{w_{+}}(\mathbb R^n)}^p$, 
 we have
\begin{align}\label{Tw+}
&\int_{\mathbb R_{+}^n}|R_N(f_+)(x)|^p\, w_+(x)dx  \leq C \|f_{+}\|_{L^{p}_{w_{+}}(\mathbb R^n)}^p.
\end{align}
Symmetrically we obtain that
\begin{align}\label{Tw-}
&\int_{\mathbb R_{-}^n}|R_N(f_-)(x)|^p\, w_-(x)dx  \leq C \|f_{-}\|_{L^{p}_{w_{-}}(\mathbb R^n)}^p.
\end{align}

Now consider the $j$-th Riesz transform $R_{N,j}$ for $j=1,\ldots,n-1$.
For any fixed cube $Q\subset \mathbb R^n_+$, consider $Q' $ the translation of $Q$ along the $j$-th direction only for the length $4\ell(Q)$.
Then it is obvious that $Q'$ is also in $ \mathbb R^n_+$.
Now for every $f\in L^1(\mathbb R^n_+)$, $f\geq0$ and supp$f\subset Q$, and for every $x\in Q'$, from the definition of $R_{N,j}(f)(x)$, we have
\begin{align*}
|R_{N,j}(f)(x)|
&=C_n \int_{\mathbb R^n_+} \bigg( {|x_j-y_j|\over |x-y|^{n+1}} +   \frac{|x_j-y_j|}{(|x'-y'|^2+|x_n+y_n|^2)^{\frac{n+1}{2}}}\bigg)\, f(y)dy\\
&\geq C_n \int_{\mathbb R^n_+}  {1\over |x-y|^{n}} f(y)dy\\
&\geq C_n  \langle f \rangle_Q.
\end{align*}
It follows that for all $0<\alpha< C_n \langle f\rangle_Q$, we have
$ Q'\subseteq \{ x\in\mathbb R^{n}_+:\ |R_{N,j}(f)(x)|>\alpha \}. $
Since  \eqref{Tw+} holds, we obtain that
$$ w_+(Q') \leq {C\over \alpha^p} \int_Q f(y)^pw_+(y)dy $$
for all  $0<\alpha< C_n  \langle f \rangle_Q$.
As a consequence, we have
$$  \langle f \rangle_Q^p \leq {C\over w_+(Q')}\int_Q f(y)^pw_+(y)dy. $$
In particular, by taking $f=\unit_Q$, we obtain that $w_+(Q')\leq Cw_+(Q)$.


Symmetrically we can also reverse the roles of $Q$ and $Q'$ to obtain
$$ \langle g\rangle_{Q'}^p \leq {C\over w_+(Q)}\int_{Q'} g(y)^pw_+(y)dy $$
for all functions $g\in L^1(\mathbb R^n_+)$, $g\geq0$ and supp$g\subset Q'$.
By taking $g=\unit_{Q'}$, we obtain that $w_+(Q)\leq Cw_+(Q')$.

Then we have that for any cube $Q$ and $f\ge 0$,
\begin{align*}
w_+(Q)  \langle f \rangle_Q^p& \leq C w_+(Q)  {1\over w_+(Q')}\int_Q f(y)^pw_+(y)dy\leq C\int_Q f(y)^pw_+(y)dy,
\end{align*}
which shows that
\begin{align*}
 \langle f \rangle_Q
&\leq \bigg({C\over w_+(Q)}\int_Q f(y)^pw_+(y)dy\bigg)^{1\over p}.
\end{align*}
Now by taking $f(x) = w_+^{-{1\over p-1}}(x)\unit_Q(x)$, we get that
$\langle w_+\rangle_Q\big\langle w_+^{-{1\over p-1}}\big\rangle^{ p-1} \leq C.$
This shows that $w_+(x)$ is an $A^p$ weight in $\mathbb R^n_+$, and hence we get that
$w_{+,e}(x)$ is an $A^p$ weight in $\mathbb R^n$.
Symmetrically we have $w_{-,e}(x)$ is an $A^p$ weight in $\mathbb R^n$. Thus, we have
$w\in A^p_{\Delta_N}(\mathbb{R}^n)$.

Finally, we consider the $n$-th Riesz transform $R_{N,n}$.
For any fixed cube $Q\subset \mathbb R^n_+$, consider $Q' $ the translation of $Q$ in the positive sense along the $n$-th direction only for the length $4\ell(Q)$.
Then it is obvious that $Q'$ is also in $ \mathbb R^n_+$.
Moreover, for any $x\in Q'$ and $y\in Q$, we have
$x_n-y_n>4\ell(Q)$ and $x_n-y_n \approx |x-y|$.

Now for every $f\in L^1(\mathbb R^n_+)$, $f\geq0$ and supp$f\subset Q$, and for every $x\in Q'$, we have
\begin{align*}
|R_{N,n}(f)(x)|
&=C_n \int_{\mathbb R^n_+} \bigg( {x_n-y_n\over |x-y|^{n+1}} +   \frac{ x_n+y_n}{(|x'-y'|^2+|x_n+y_n|^2)^{\frac{n+1}{2}}}\bigg)\, f(y)dy\\
&\geq C_n \int_{\mathbb R^n_+}  {1\over |x-y|^{n}} f(y)dy\\
&\geq C_n \left< f\right>_Q.
\end{align*}
Then, following the same estimates as those for $R_{N,j}$ with $j<n$, we obtain that
$w_{+,e}(x)$ is an $A^p$ weight in $\mathbb R^n$.

As for $w_{-}$, for any fixed cube $Q\subset \mathbb R^n_-$, consider $Q' $ the translation of $Q$ in the negative direction along the $n$-th direction only for the length $4\ell(Q)$.
Then it is obvious that $Q'$ is also in $ \mathbb R^n_-$.
Moreover, for any $x\in Q'$ and $y\in Q$, we have
$x_n-y_n<-4\ell(Q)$ and $|x_n-y_n| \approx |x-y|$.
Now for every $f\in L^1(\mathbb R^n_-)$, $f\geq0$ and supp$f\subset Q$, and for every $x\in Q'$, we have
\begin{align*}
|R_{N,n}(f)(x)|
&=C_n \int_{\mathbb R^n_+} \bigg( {|x_n-y_n|\over |x-y|^{n+1}} +   \frac{ |x_n+y_n|}{(|x'-y'|^2+|x_n+y_n|^2)^{\frac{n+1}{2}}}\bigg)\, f(y)dy\\
&\geq C_n \int_{\mathbb R^n_+}  {1\over |x-y|^{n}} f(y)dy\\
&\geq C_n \left< f\right>_Q.
\end{align*}
Then, following the same estimates as those for $R_{N,j}$ with $j<n$, we obtain that
$w_{-,e}(x)$ is an $A^p$ weight in $\mathbb R^n$.  Combining all these fact, we get that $w\in A^p_{\Delta_N}(\mathbb R^n)$.
\end{proof}

\begin{theorem}\label{t-Ap Neumm log-exp brid}
Suppose $1<p<\infty$ and
$w\in A^p_{\Delta_N}(\mathbb R^n)$. Then we have
$\log w \in {\rm BMO}_{\Delta_N}(\mathbb{R}^n)$.
Conversely, for every $f\in {\rm BMO}_{\Delta_N}(\mathbb{R}^n)$, there exists $\delta>0$ such that
$e^{\delta f} \in A^p_{\Delta_N}(\mathbb R^n)$.
\end{theorem}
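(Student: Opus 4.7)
The plan is to reduce the statement, via the reflection structure, to the classical Garc\'ia-Cuerva--Rubio de Francia log-exp bridge \cite{GR} for the ordinary Muckenhoupt class $A^p(\mathbb R^n)$ and the classical BMO space on $\mathbb R^n$. The key point is that the even extension commutes with both the logarithm and the exponential: for any positive locally integrable $w$ on $\mathbb R^n$ one has $(\log w)_{+,e} = \log(w_{+,e})$ and $(\log w)_{-,e} = \log(w_{-,e})$, and for any real-valued $f$ and $\delta>0$, $(e^{\delta f})_{\pm,e} = e^{\delta f_{\pm,e}}$. All of these identities are immediate from the pointwise definition of $(\cdot)_{\pm,e}$ together with the restriction operator $(\cdot)_{\pm}$ commuting trivially with the scalar functions $\log$ and $\exp$.

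For the forward direction, suppose $w\in A^p_{\Delta_N}(\mathbb R^n)$. By Definition \ref{d-Ap Delta} both $w_{+,e}$ and $w_{-,e}$ lie in the classical Muckenhoupt class $A^p(\mathbb R^n)$, so by \cite{GR}*{Theorem 2.17 and Corollary 2.19}, $\log(w_{+,e})$ and $\log(w_{-,e})$ both lie in the classical ${\rm BMO}(\mathbb R^n)$. By the commutation identity above, this is the same as saying $(\log w)_{+,e}$ and $(\log w)_{-,e}$ are in ${\rm BMO}(\mathbb R^n)$. Applying the unweighted case ($\nu\equiv1$) of the characterization Theorem \ref{p-coinc Dz BMO} yields $\log w \in {\rm BMO}_{\Delta_N}(\mathbb R^n)$.

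For the converse, suppose $f\in {\rm BMO}_{\Delta_N}(\mathbb R^n)$. The same characterization gives that $f_{+,e},f_{-,e}\in {\rm BMO}(\mathbb R^n)$, and the classical result \cite{GR}*{Corollary 2.19} produces $\delta_+,\delta_->0$ with $e^{\delta_+ f_{+,e}}\in A^p(\mathbb R^n)$ and $e^{\delta_- f_{-,e}}\in A^p(\mathbb R^n)$. Taking $\delta:=\min\{\delta_+,\delta_-\}$ and using that the Garc\'ia-Cuerva argument actually supplies a whole interval $(0,\delta_0)$ of admissible exponents (so that shrinking $\delta_+,\delta_-$ to a common value is harmless), we get $e^{\delta f_{+,e}}, e^{\delta f_{-,e}} \in A^p(\mathbb R^n)$. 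By the commutation identity, $(e^{\delta f})_{+,e} = e^{\delta f_{+,e}}$ and $(e^{\delta f})_{-,e} = e^{\delta f_{-,e}}$, so both even extensions of $e^{\delta f}$ belong to $A^p(\mathbb R^n)$, which is precisely $e^{\delta f}\in A^p_{\Delta_N}(\mathbb R^n)$.

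The only non-routine ingredient is the unweighted case of the characterization Theorem \ref{p-coinc Dz BMO}, which reduces ${\rm BMO}_{\Delta_N}$-membership to the pair $(f_{+,e},f_{-,e})\in {\rm BMO}(\mathbb R^n)\times {\rm BMO}(\mathbb R^n)$; once that reduction is in hand the proof becomes a direct transport of the classical Garc\'ia-Cuerva--Rubio de Francia result through the even-extension correspondence. I anticipate no genuine obstacle beyond verifying that the admissible $\delta$ from the classical John--Nirenberg step can be chosen uniformly for the two extensions, which is handled by taking the minimum.
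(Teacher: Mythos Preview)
Your proof is correct and follows essentially the same route as the paper: both reduce to the classical $A^p$--${\rm BMO}$ bridge via the commutation identities $(\log w)_{\pm,e}=\log(w_{\pm,e})$ and $(e^{\delta f})_{\pm,e}=e^{\delta f_{\pm,e}}$, then take $\delta=\min\{\delta_+,\delta_-\}$. One minor point: the characterization you need is the full-space statement (Proposition~\ref{p-coinc Dz-BMO-2} in the unweighted case), not Theorem~\ref{p-coinc Dz BMO}, which is the half-space version; the paper itself simply invokes the unweighted characterization of ${\rm BMO}_{\Delta_N}(\mathbb R^n)$ as a known fact from \cite{DDSY,LW} without forward-referencing Section~6.
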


\begin{proof}
Suppose $1<p<\infty$ and
$w\in A^p_{\Delta_N}(\mathbb R^n)$. Then we have $(\log w)_{+,e} = \log w_{+,e}$. Since $w_{+,e}\in A^p(\mathbb R^n)$, we get that
$ \log w_{+,e} $ is in BMO$(\mathbb R^n)$, which shows that $(\log w)_{+,e}\in {\rm BMO}(\mathbb R^n)$. Similarly, we obtain that $(\log w)_{-,e}\in {\rm BMO}(\mathbb R^n)$. As a consequence, we get that
$\log w \in {\rm BMO}_{\Delta_N}(\mathbb{R}^n)$.

Conversely,  for every $f\in {\rm BMO}_{\Delta_N}(\mathbb{R}^n)$, we know that both  $f_{+,e}$ and  $f_{-,e}$
are in ${\rm BMO}(\mathbb R^n)$, which shows that there exist $\delta_1,\delta_2>0$ such that
$e^{\overline{\delta_1} f_{+,e}} ,  e^{\overline{\delta_2} f_{-,e}} \in A^p(\mathbb R^n)$ for all $\overline{\delta_1}\in(0,\delta_1)$ and $\overline{\delta_2}\in(0,\delta_2)$, respectively. Then we set $\delta=\min\{\delta_1, \delta_2\}$ and it is direct to see that $e^{\delta f_{+,e}} ,  e^{\delta f_{-,e}} \in A^p(\mathbb R^n)$, i.e.,
$\big(e^{\delta f}\big)_{+,e} , \big( e^{\delta f}\big)_{-,e} \in A^p(\mathbb R^n)$. Hence, we get that
$e^{\delta f} \in A^p_{\Delta_N}(\mathbb R^n)$.
\end{proof}

\section{Characterizations of \texorpdfstring{$H^1_{w}(\mathbb{R}^n)$}{weight Hardy spaces},  \texorpdfstring{${\rm BMO}_{w}(\mathbb{R}^n)$}{weighted BMO} and duality}
\setcounter{equation}{0}
\label{s:BloomBMOHardy}

In this section,  we make an intensive study of the classical weighted BMO and Hardy spaces introduced in Muckenhoupt--Wheeden \cites{MW76,MW78} and further studied by Garc\'ia--Cuerva \cite{Ga}. 
We begin with the definition of Muckenhoupt--Wheeden  weighted BMO space as follows.


\subsection{\texorpdfstring{The John--Nirenberg inequality for ${\rm BMO}_{w}(\mathbb{R}^n)$}{The John--Nirenberg inequality for weighted BMO}}

\begin{definition}[\cite{MW76}]\label{def: BMOw}
Suppose $1<p<\infty$ and $w\in A^p(\mathbb R^n)$. The weighted BMO space is defined as
${\rm BMO}_{w}(\mathbb{R}^n):=\{f\in L^1_{\rm loc}(\mathbb R^n): \|f\|_{{\rm BMO}_{w}(\mathbb{R}^n)}<\infty\},$
where
$$\|f\|_{{\rm BMO}_{w}(\mathbb{R}^n)}=\sup_Q\frac1{w(Q)}\int_Q\left|f(x)-\left< f\right>_Q\right|\,dx.$$
\end{definition}

The following result, which is a weighted version of the John-Nirenberg theorem, appeared first
in \cite{MW}, where the Muckenhoupt $A^p$ characteristic was not tracked. 

\begin{theorem}\label{T:MW}
Suppose $1<p<\infty$ and $w\in A^p(\mathbb R^n)$.
Let $b\in {\rm BMO}_{w}(\mathbb{R}^n)$. Then for any $1 \leq r \leq p'$, we have
\begin{align}
\|b\|_{{\rm BMO}_{w}(\mathbb{R}^n)} \approx \|b\|_{{\rm BMO}_{w,r}(\mathbb{R}^n)}:=
\bigg(\sup_Q\frac1{w(Q)}\int_Q\left|b(x)-\left< b\right>_Q\right|^r\, w^{1-r}(x)dx\bigg)^{1\over r}.
\end{align}
In particular, we have
\begin{align}\label{eq:MW}
\|b\|_{{\rm BMO}_{w}(\mathbb{R}^n)} \leq \|b\|_{{\rm BMO}_{w,r}(\mathbb{R}^n)} \leq C_{n,p,r} [w]_{A^p}^{\max\{1,{1\over p-1}\}}\|b\|_{{\rm BMO}_{w}(\mathbb{R}^n)}.
\end{align}
\end{theorem}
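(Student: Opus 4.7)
The easy direction $\|b\|_{{\rm BMO}_{w}(\rn)} \leq \|b\|_{{\rm BMO}_{w,r}(\rn)}$ follows at once from H\"older's inequality with exponents $r$ and $r'$ applied to the factorization $|b-\langle b\rangle_Q| = (|b-\langle b\rangle_Q|^r w^{1-r})^{1/r}\cdot w^{(r-1)/r}$: the identity $(r-1)r'/r=1$ turns the second Hölder factor into $w(Q)^{1/r'}$ upon integration, and dividing by $w(Q)$ yields the claim (the case $r=1$ being trivial). For the quantitative reverse estimate, my first step is to reduce to the endpoint $r=p'$; a second H\"older argument, this time with exponents $p'/r$ and $p'/(p'-r)$ applied after splitting $w^{1-r}=w^{(1-p')r/p'}\cdot w^{(p'-r)/p'}$, gives
$$\left(\frac1{w(Q)}\int_Q|b-\langle b\rangle_Q|^r w^{1-r}\,dx\right)^{1/r} \leq \left(\frac1{w(Q)}\int_Q|b-\langle b\rangle_Q|^{p'} w^{1-p'}\,dx\right)^{1/p'},$$
so it suffices to control the right-hand side when $r=p'$.

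The main tool I will invoke is Lerner's sparse domination formula: for any cube $Q_0$ and any $b\in L^1_{\rm loc}(\rn)$ there is a sparse family $\mathcal S\subset \mathcal D(Q_0)$ containing $Q_0$ such that for a.e.\ $x\in Q_0$,
$$|b(x)-\langle b\rangle_{Q_0}| \leq c_n \sum_{Q\in \mathcal S}\frac{1}{|Q|}\int_Q|b-\langle b\rangle_Q|\,dy\cdot\unit_Q(x).$$
The hypothesis $b\in{\rm BMO}_w(\rn)$ converts each inner average into $\leq \|b\|_{{\rm BMO}_w(\rn)}\langle w\rangle_Q$, so the problem reduces to the pointwise estimate
$$|b(x)-\langle b\rangle_{Q_0}| \leq c_n\|b\|_{{\rm BMO}_w(\rn)}\,\mathcal A_\mathcal S(w)(x), \qquad x\in Q_0,$$
where $\mathcal A_\mathcal S(f)=\sum_{Q\in\mathcal S}\langle f\rangle_Q\unit_Q$ is the sparse averaging operator.

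The final ingredient is the sharp weighted bound $\|\mathcal A_\mathcal S\|_{L^{p'}(w^{1-p'})\to L^{p'}(w^{1-p'})}\ls[w]_{A^p}^{\max\{1,\,1/(p-1)\}}$, which I will deduce from the Hyt\"onen--P\'erez sharp sparse-operator estimate applied to the dual weight $\sigma=w^{1-p'}\in A^{p'}$, using $[\sigma]_{A^{p'}}=[w]_{A^p}^{1/(p-1)}$ and the elementary equality $(p-1)^{-1}\max\{1,p-1\}=\max\{1,1/(p-1)\}$. Testing this bound on $f=w\unit_{Q_0}$, whose $L^{p'}(w^{1-p'})$-norm equals $w(Q_0)^{1/p'}$, will give
$$\int_{Q_0}(\mathcal A_\mathcal S w)^{p'}w^{1-p'}\,dx \ls [w]_{A^p}^{p'\max\{1,1/(p-1)\}}\,w(Q_0),$$
which combined with the pointwise estimate above delivers \eqref{eq:MW} at the endpoint $r=p'$, and hence for all $r\in[1,p']$. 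The principal obstacle will be the sharp tracking of the $[w]_{A^p}$ dependence through the sparse-operator bound on $L^{p'}(w^{1-p'})$; once Lerner's decomposition and the sharp Hyt\"onen--P\'erez estimate are invoked as black boxes, the remainder of the argument is a clean reduction.
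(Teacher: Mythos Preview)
Your proof is correct, but it takes a genuinely different route from the paper's. The paper fixes $Q_0$, passes to the dual by pairing $\unit_{Q_0}(b-\langle b\rangle_{Q_0})$ against test functions $f\in L^{r'}(w)$, expands in Haar coefficients, and then builds its own sparse family by running \emph{two} simultaneous Calder\'on--Zygmund decompositions over $Q_0$: one stopping on $\langle w\rangle_R > \alpha\langle w\rangle_{Q_0}$ (which converts $b$ locally into an \emph{unweighted} BMO function), and one stopping on $\langle |f|\rangle_R > \alpha\langle |f|\rangle_{Q_0}$. The resulting sparse sum $\sum_{Q\in\cs}\langle |f|\rangle_Q w(Q)$ is then controlled by the sparse operator bound $\|\ca_\cs\|_{L^{r'}(w)\to L^{r'}(w)}\ls [w]_{A^{r'}}^{\max\{1,1/(r'-1)\}}\leq [w]_{A^p}^{\max\{1,1/(p-1)\}}$.

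You instead invoke Lerner's pointwise sparse domination of the oscillation $|b-\langle b\rangle_{Q_0}|$ directly, bound each local oscillation by $\|b\|_{{\rm BMO}_w}\langle w\rangle_Q$, and thereby reduce at once to $\|\ca_\cs w\|_{L^{p'}(w^{1-p'})}$, which you test on $w\unit_{Q_0}$ (noting $\cs\subset\cd(Q_0)$, so $\ca_\cs(w\unit_{Q_0})=\ca_\cs w$ on $Q_0$). Your reduction to the endpoint $r=p'$ via H\"older is a clean simplification that the paper does not make. The trade-off: your argument is shorter and invokes Lerner's formula as a black box, while the paper's hands-on construction is self-contained and exhibits the ``weighted BMO $\Rightarrow$ locally unweighted BMO'' decomposition explicitly, which they also use elsewhere. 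Both routes land on the same sharp sparse operator estimate \eqref{E:SparseAp} and yield the same $[w]_{A^p}^{\max\{1,1/(p-1)\}}$ dependence.
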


\begin{proof}

In what follows, using the techniques of sparse operators, we provide a modern proof of the dyadic version of this result, i.e., the suprema is
taken over all dyadic cubes $Q$ only in both the norm of ${\rm BMO}_{w}(\mathbb{R}^n)$ and ${\rm BMO}_{w,r}(\mathbb{R}^n)$.
Then we can get back to the general version by using the $1/3$-trick and the bridge
between continuous and dyadic BMO spaces via intersection, see for example \cite{LPW}.

\subsubsection{Sparse collections and operators}
Suppose $\cd$ is a dyadic lattice on $\R^n$, that is, a collection of cubes with the properties:
	\begin{itemize}
	\item Every $Q\in\cd$ has sidelength $l(Q) = 2^{-k}$ for some integer $k$;
	\item $P \cap Q \in \{ P, Q, \emptyset \}$ for every $P, Q \in \cd$;
	\item The cubes $Q \in \cd$ with $l(Q) = 2^{-k}$, for some fixed integer $k$, partition $\R^n$.
	\end{itemize}
Recall that every dyadic interval $I$ in $\R$ is associated with two Haar functions:
	$$ h_I^{0} := \frac{1}{\sqrt{|I|}} (\unit_{I-} - \unit_{I_+}) \text{ and } h_I^1 := \frac{1}{\sqrt{|I|}}\unit_I, $$
the first one being cancellative (it has mean $0$), the second being non-cancellative (it has a non-zero mean).
Given a dyadic grid $\cd$ on $\R^{n}$, every dyadic cube $Q = I_1 \times\cdots\times I_n$, where all $I_i$ are dyadic intervals in $\R$ with common length $l(Q)$,
is associated with $2^n-1$ cancellative Haar functions:
	$$ h^\ep_{Q}(x) := h_{I_1\times\ldots\times I_n}^{(\ep_1, \ldots, \ep_n)}(x_1, \ldots, x_n) := \prod_{i=1}^n h_{I_i}^{\ep_i}(x_i),$$
where $\ep \in \{0,1\}^n\setminus \{(1, \ldots, 1)\}$ is the signature of $h_Q^{\ep}$. To simplify notation, we assume that signatures are never the identically $1$ signature, in which case
the corresponding Haar function would be non-cancellative.
The cancellative Haar functions form an orthonormal basis for $L^2(\R^n)$. We write
	$$ f = \sum_{Q\in\cd}  \left\langle f, h_Q^\ep\right\rangle h_Q^\ep, $$
where 
$\left< f, g\right> := \int_{\R^n} f(x)g(x)\,dx$, and summation over $\ep$ is assumed.

\begin{definition} \label{D:Sparse}
Given $0<\eta<1$, a collection $\cs \subset \cd$ of dyadic cubes is said to be $\eta$-sparse provided that for every $Q\in\cs$, there is a measurable subset $E_Q \subset Q$ such that
$|E_Q| \geq \eta |Q|$ and the sets $\{E_Q\}_{Q\in\cs}$ are pairwise disjoint.
\end{definition}

\begin{definition}
Given $\La > 1$, a family $\cs \subset \cd$ of dyadic cubes is said to be $\La$-Carleson provided that
	$$ \sum_{P\in\cs, P\subset Q} |P| \leq \La |Q|, $$
for all $Q\in\cs$.
\end{definition}

A remarkable property is that \cites{L,LN}:
	$ \cs \text{ is } \eta\text{-sparse } \Leftrightarrow \cs \text{ is } \frac{\displaystyle1}{\displaystyle\eta}\text{-Carleson}. $

An important particular case of sparse collections that appears frequently in practice is the following. Suppose we have a family of dyadic cubes $\cs \subset \cd$. For every
$Q\in\cs$, let $ch_{\cs}(Q)$ denote the collection of \textit{maximal} elements of $\cs$ that are \textit{strictly} contained in $Q$ -- the ``$\cs$-children'' of $Q$. Now suppose the
family $\cs$ has the property that:
	$$ \sum_{P \in ch_{\cs}(Q)} |P| \leq \frac{1}{\alpha} |Q|,$$
for some $\alpha > 1$. Then $\cs$ is a $(1 - \frac{1}{\alpha})$-sparse collection. This is easy to see by taking the sets $E_Q$ in Definition \ref{D:Sparse} to be
$E_Q := Q \setminus \bigcup_{P\in ch_{\cs}(Q)}P$.

\begin{definition}
Given a sparse collection $\cs \subset \cd$, a sparse operator is one of the form:
	$$ \ca_{\cs}f(x) := \sum_{Q\in\cs} \left< f\right>_Q \unit_Q(x). $$
\end{definition}

For any $\eta$-sparse operator $\ca_{\cs}$ we have the $A^2(\mathbb R^n)$ bound (see, for example, \cites{L,LN}):
	\begin{equation}\label{E:SparseA2}
	\| \ca_{\cs} \|_{L^2(w) \rightarrow L^2(w)} \lesssim \frac{1}{\eta}[w]_{A^2}, \forall w\in A^2,
	\end{equation}
from which it follows by extrapolation \cites{L,LN}:
	\begin{equation} \label{E:SparseAp}
	\| \ca_{\cs} \|_{L^p(w) \rightarrow L^p(w)} \lesssim \frac{1}{\eta} [w]_{A^p}^{\max\{1, \frac{1}{p-1}\}}.
	\end{equation}

\subsubsection{The weighted BMO decomposition} \label{Ss:BMOdecomp}
Turning back to the weighted BMO question, let $w$ be a weight on $\R^n$ and $b \in BMO_{\cd}(w)$. For a fixed cube $Q_0 \in \cd$ and $\alpha > 1$, consider the collection:
	$$\ce := \left\{ \text{maximal subcubes } R \in \cd(Q_0) \text{ such that } \left<w\right>_R > \alpha \left<w\right>_{Q_0} \right\},$$
and denote $E := \bigcup_{R\in\ce} R$. This is simply the collection in the Calder\'{o}n-Zygmund decomposition of $w$ over $Q_0$, so we immediately have:
	$$ \alpha \left< w\right>_{Q_0} < \left<w\right>_R \leq 2^n \alpha \left< w\right>_{Q_0}\ {\rm for\ all\ } R \in \ce,\quad{\rm and}\quad  \sum_{R\in\ce} |R| \leq \frac{1}{\alpha}|Q_0|.  $$

Now, instead of forming the usual ``good'' and ``bad'' functions for $w$, we let
	$$ a(x) := \unit_{Q_0}b(x) - \sum_{R\in\ce} (b(x) - \left<b\right>_R) \unit_R(x). $$
The ``good'' function $a$ will have the usual properties resulting from a Calder\'on-Zygmund decomposition:
	$$ a(x) = \left\{ \begin{array}{l} b(x) \text{, if } x \in Q_0\setminus E \\ \left<b\right>_R \text{, if } x\in R, R\in\ce, \end{array}
	\right.$$
and for all $Q \in \cd(Q_0)$, $Q \not\subset E$:
	$$ \left<a\right>_Q = \left<b\right>_Q \text{ and } \left\langle a, h_Q^\ep\right\rangle = \left\langle b, h_Q^\ep\right\rangle. $$
More importantly though, the function $a$ will belong to the \textit{unweighted} dyadic $BMO_{\cd}[Q_0]$ space over $Q_0$ (this simply means that we are taking supremum over the subcubes of $Q_0$):
	\begin{equation} \label{E:aBMO}
	\|a\|_{BMO_{\cd}[Q_0]} \leq 2\alpha \left<w\right>_{Q_0} \|b\|_{BMO_{\cd}(w)}.
	\end{equation}
To see this, let $Q \in \cd(Q_0)$. If $Q \subset R$ for some $R \in \ce$, then $a(x) = \left<b\right>_R$ for all $x \in Q$. Otherwise, suppose $Q \not\subset E$:
	\begin{align*}
	\frac{1}{|Q|} \int_Q |a - \left<a\right>_Q|\,dx &= \frac{1}{|Q|} \int_Q | b - \sum_{R\in\ce}(b - \left<b\right>_R)\unit_R - \left<b\right>_Q |\,dx \\
		&\leq \frac{1}{|Q|}\int_Q |b - \left<b\right>_Q|\,dx + \frac{1}{|Q|} \sum_{R\in\ce, R\subsetneq Q} \int_R |b - \left<b\right>_R|\,dx \\
		&\leq \left<w\right>_Q \|b\|_{BMO_{\cd}(w)} + \frac{1}{|Q|} \|b\|_{BMO_{\cd}(w)} \sum_{R\in\ce, R\subsetneq Q} w(R)\\
		&\leq \alpha \left<w\right>_{Q_0} \|b\|_{BMO_{\cd}(w)} + \left<w\right>_Q \|b\|_{BMO_{\cd}(w)}\\
		&\leq 2\alpha \left<w\right>_{Q_0} \|b\|_{BMO_{\cd}(w)},
	\end{align*}
where the last two inequalities follow because $Q$ was not selected for $\ce$.

A very useful application of this decomposition is that reduction to \textit{unweighted} BMO allows for an efficient way to handle Haar coefficients and averages of weighted BMO functions. For instance, the decomposition
	\begin{equation} \label{E:HaarDecomp}
	\unit_{Q_0}(x) (a(x) - \left<a\right>_{Q_0}) = \sum_{Q\in\cd(Q_0)} \left\langle a, h_Q^\ep\right\rangle h_Q^{\ep}
	\end{equation}
and an appeal to the dyadic square function yields the well-known expression for the dyadic BMO norm:
	$$ \|a\|_{BMO_{\cd}[Q_0]} \simeq \sup_{Q\in\cd(Q_0)} \bigg( \frac{1}{|Q|} \sum_{P\in\cd(Q)} |\langle a, h_P^\ep\rangle|^2 \bigg)^{1/2}. $$
From this, another frequently used inequality follows trivially:
	$$ |\langle a, h_Q^\ep\rangle| \lesssim \sqrt{|Q|} \|a\|_{BMO_{\cd}[Q_0]}, \forall Q \in \cd(Q_0), $$
all stated here locally over some $Q_0\in\cd$, but obviously hold for general $a \in BMO_{\cd}(\R^n)$.
The decomposition above yields a similar inequality for $b \in BMO_{\cd}(w)$, namely:
	$$ |\langle b, h_Q^\ep\rangle| \lesssim \sqrt{|Q|} \left<w\right>_Q \|b\|_{BMO_{\cd}(w)}, \forall Q \in \cd. $$
To see this, let $Q \in \cd$ and let $a$, supported on $Q$, be the BMO decomposition of $b$ over $Q$. Since $Q$ itself is not selected for the collection,
$\left\langle b, h_Q^\ep\right\rangle = \left\langle a, h_Q^\ep\right\rangle$, and the claim follows from \eqref{E:aBMO}.

\subsubsection{Proof of \eqref{eq:MW}}
We have $b \in BMO_{\cd}(w)$, for $w\in A^p(\mathbb R^n)$ and $1 < r < p'$.
First note that $\|b\|_{BMO_{\cd}(w)} \leq \|b\|_{BMO_{\cd}(w, r)}$ is a trivial consequence of H\"{o}lder's inequality.
For the other direction, fix $Q_0 \in \cd$ and we show that
	$$ \left( \frac{1}{w(Q_0)} \int_{Q_0} |b - \left<b\right>_{Q_0}|^r \,w^{1-r}(x)dx \right)^{1/r} \leq C_{n,p,r} [w]_{A^p}^{\max\{1, \frac{1}{p-1}\}} \|b\|_{BMO_{\cd}(w)}.$$
Note that $w^{1-r} \in A^r$ is the weight conjugate to $w$ as viewed in $A^{r'} \supset A^p$. So, by duality,
	$$ \left\| \unit_{Q_0} (b - \left< b\right>_{Q_0}) \right\|_{L^r(w^{1-r})}
	=  \sup\left\{ \left| \left< \unit_{Q_0}(b - \left< b\right>_{Q_0}), f \right> \right| : f \in L^{r'}(w), \: \|f\|_{L^{r'}(w)} \leq 1 \right\},$$
and it then suffices to show that
	$$  \left| \left< \unit_{Q_0}(b - \left< b\right>_{Q_0}), f \right> \right| \leq C_{n,p,r} [w]_{A^p}^{\max(1, \frac{1}{p-1})} \|b\|_{BMO_{\cd}(w)} \|f\|_{L^{r'}(w)} w(Q_0)^{1/r}. $$	
Now, by \eqref{E:HaarDecomp}, we have
	$$  \left| \left< \unit_{Q_0}(b - \left< b\right>_{Q_0}), f \right> \right| \leq \sum_{Q\in\cd(Q_0)} |\langle b, h_Q^\ep\rangle| |\langle f, h_Q^\ep\rangle|. $$

For some $\alpha > 1$, consider the decomposition of $b$ in Section \ref{Ss:BMOdecomp}: let $\ce_b \subset\cd(Q_0)$ be the maximal subcubes $R \in \cd(Q_0)$
such that $\left<w\right>_R > 2\alpha \left<w\right>_{Q_0}$, with $E_b := \bigcup_{R\in\ce_b}R$, and $a := \unit_{Q_0}b - \sum_{R\in\ce} (b - \left<b\right>_R)\unit_R$.
Then consider the usual Calder\'{o}n-Zygmund decomposition of $f$ over $Q_0$ at level $2\alpha$, namely let $\ce_f \subset \cd(Q_0)$ be the collection of maximal subcubes of
$Q_0$ such that $\left< |f| \right>_{R} > 2\alpha \left< |f|\right>_{Q_0}$, with $E_f := \bigcup_{R\in\ce_f} R$, and the ``good'' function
$\gamma(x) := \unit_{Q_0}(x) f(x) - \sum_{R\in\ce_f}(f(x) - \left<f\right>_R)\unit_R(x)$.
Now let $\ce$ be the collection of maximal subcubes $Q\in\cd(Q_0)$ contained in $E_b \cup E_f$, and $E := E_b \cup E_f$ (see Figure \ref{fig1}).

\begin{figure}
\begin{subfigure}{.33\textwidth}
  \centering
  \includegraphics[width=1\linewidth]{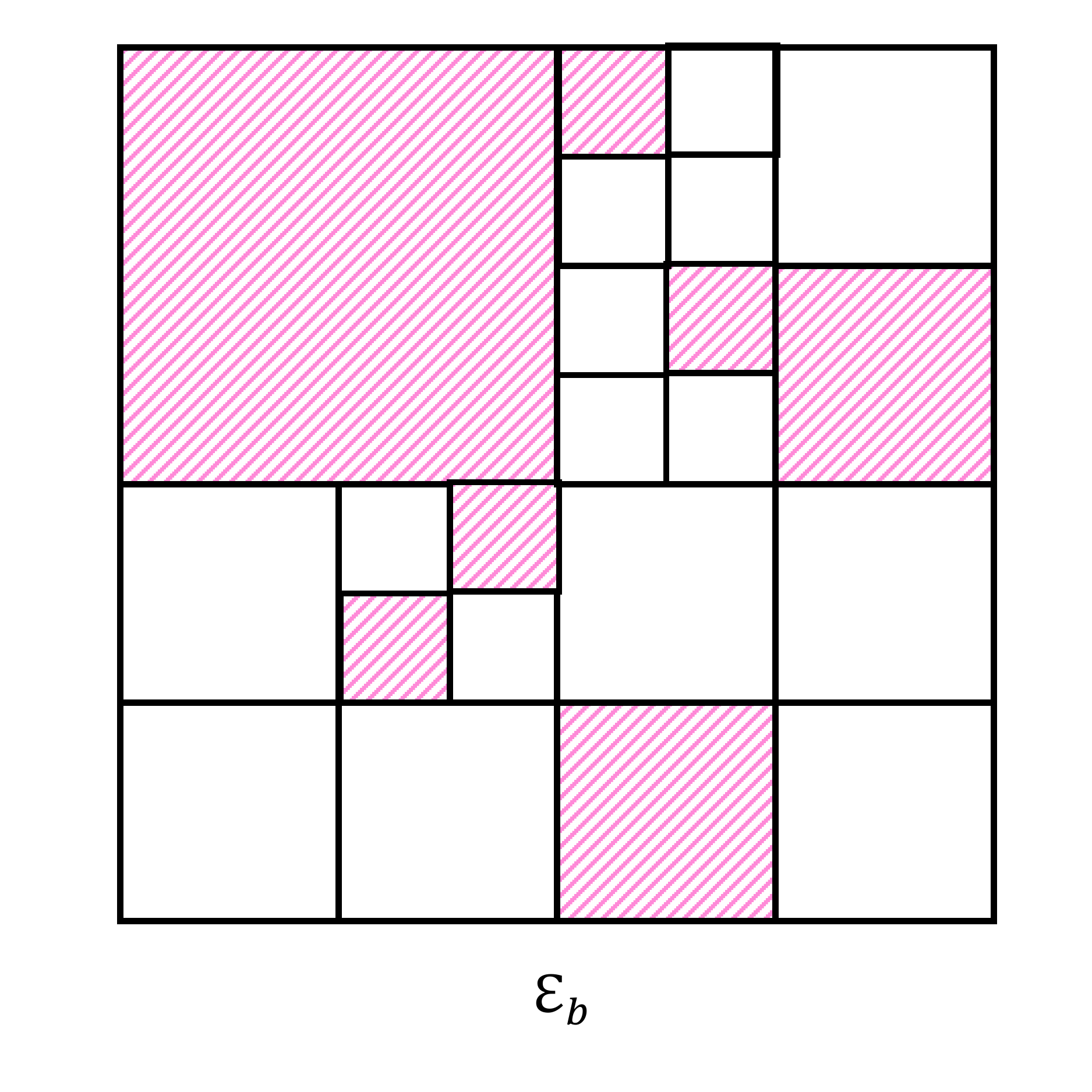}
\end{subfigure}%
\begin{subfigure}{.33\textwidth}
  \centering
  \includegraphics[width=1\linewidth]{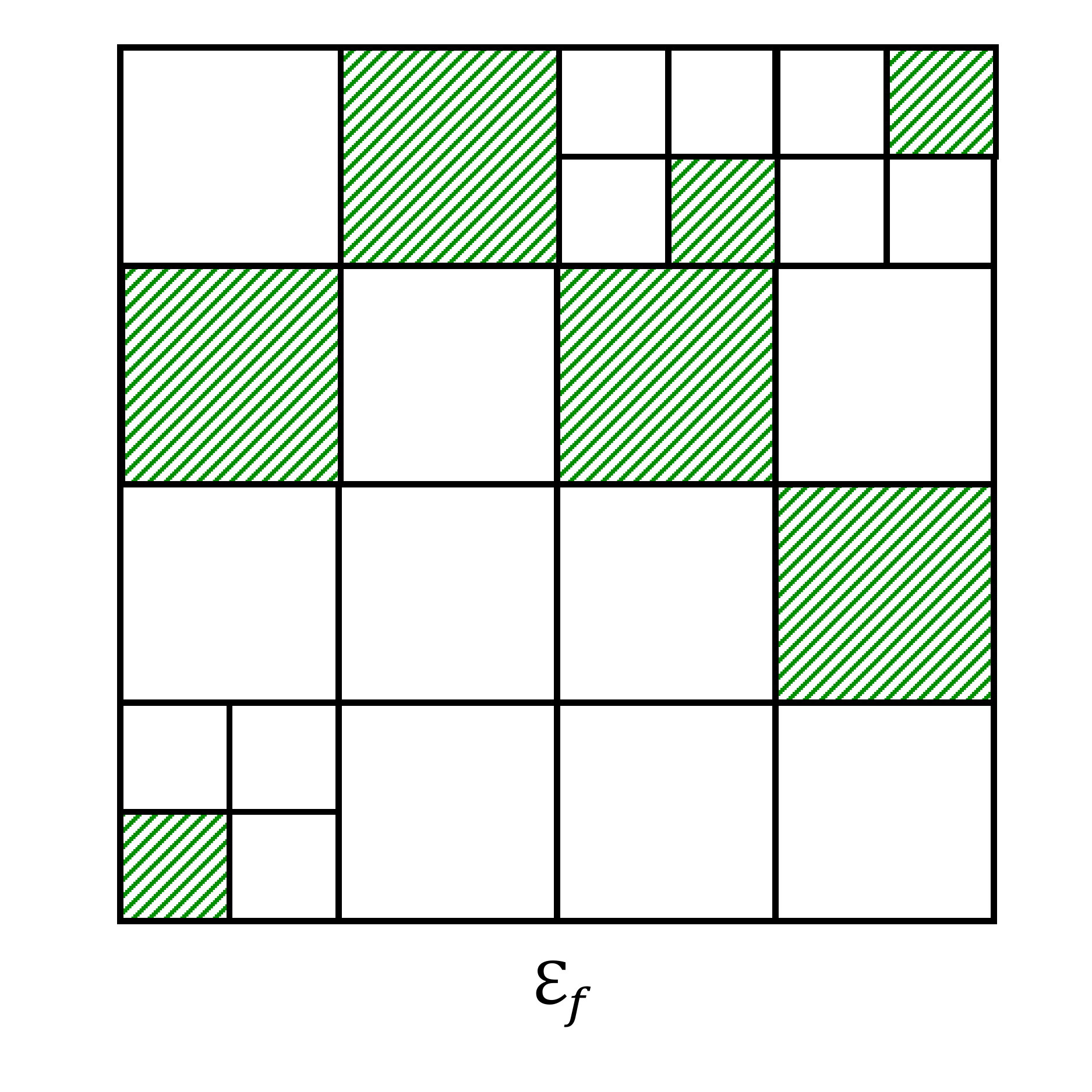}
\end{subfigure}
\begin{subfigure}{.33\textwidth}
  \centering
  \includegraphics[width=1\linewidth]{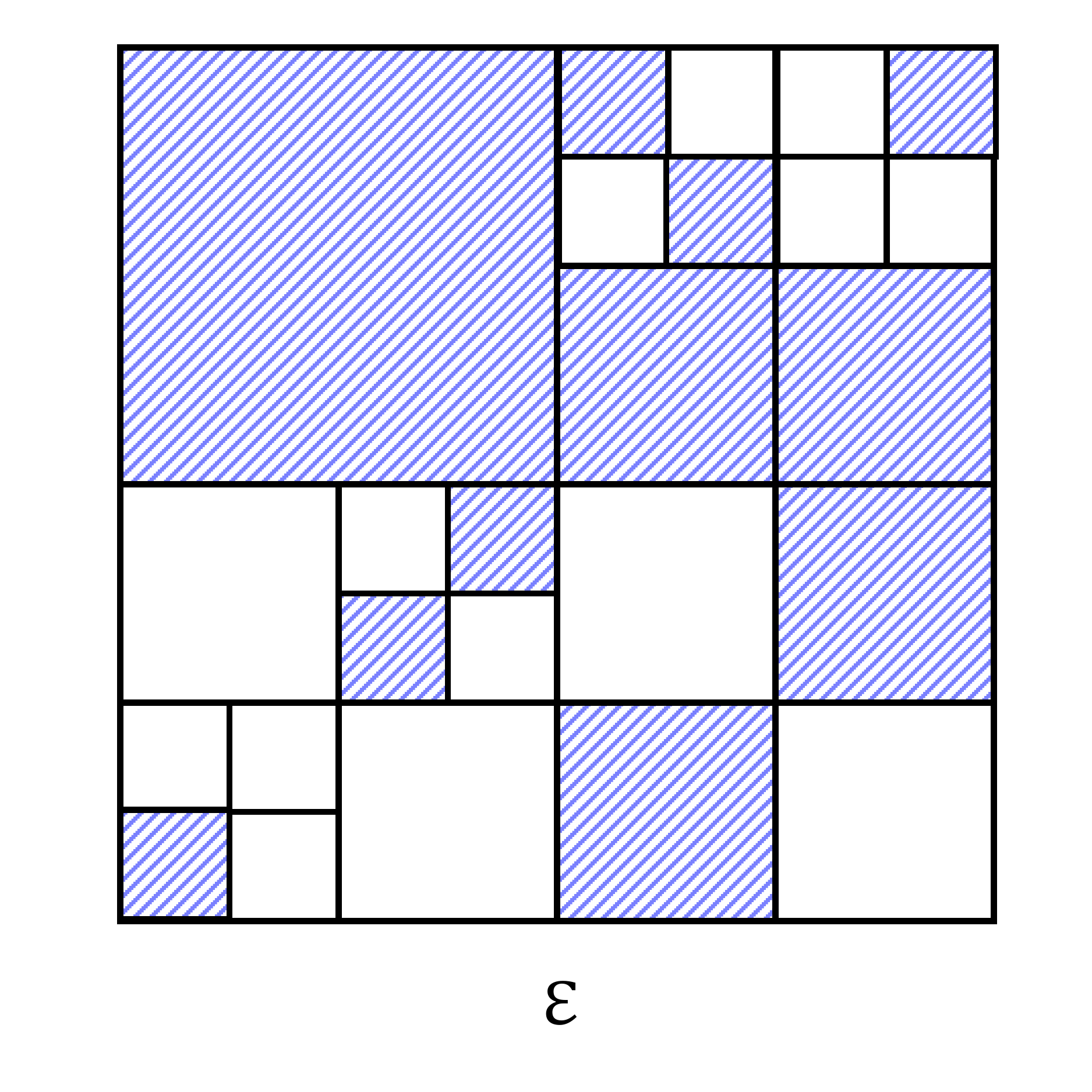}
\end{subfigure}
\caption{An example of the collections $\ce_b$, $\ce_f$, and $\ce$ -- the maximal cubes contained in $\ce_b \cup \ce_f$.}
\label{fig1}
\end{figure}

Then
	$$ \sum_{R\in\ce} |R| = |E_b \cup E_f| \leq |E_b| + |E_f| \leq \sum_{R\in\ce_b}|R| + \sum_{R\in\ce_f}|R| \leq \frac{1}{\alpha}|Q_0|. $$
Now
	\begin{align} \label{E:Temp1}
	&\sum_{Q\in\cd(Q_0)} |\langle b, h_Q^\ep\rangle| |\langle f, h_Q^\ep\rangle| \\
	&\quad=
		\sum_{Q\in \cd(Q_0),\ Q \not\subset E} |\langle b, h_Q^\ep\rangle| |\left\langle f, h_Q^\ep\right\rangle|
		+ \sum_{R\in\ce} \sum_{Q\in\cd(R)} |\langle b, h_Q^\ep\rangle| |\left\langle f, h_Q^\ep\right\rangle|.\nonumber
	\end{align}
If $Q \not\subset E$, then $Q\not\subset E_b$ and $Q\not\subset E_f$, so
	$$ \left\langle b, h_Q^\ep\right\rangle= \left\langle a, h_Q^\ep\right\rangle \text{ and } \left\langle f, h_Q^\ep\right\rangle = \left\langle \gamma, h_Q^\ep\right\rangle, $$
and the first term in \eqref{E:Temp1} becomes:
	\begin{align*}
	\sum_{Q\in \cd(Q_0); Q \not\subset E} |\langle a, h_Q^\ep\rangle| |\langle \gamma, h_Q^\ep\rangle| &\leq
		\bigg( \sum_{Q\in\cd(Q_0)} |\langle a, h_Q^\ep\rangle|^2 \bigg)^{1/2} \bigg( \sum_{Q\in\cd(Q_0)} |\langle \gamma, h_Q^\ep\rangle|^2 \bigg)^{1/2} \\
	&\leq \sqrt{|Q_0|} \|a\|_{BMO_{\cd}(Q_0)} \|\gamma\|_{L^2(Q_0)}\\
	&\leq \sqrt{|Q_0|} 4\alpha \left< w\right>_{Q_0} \|b\|_{BMO_{\cd}(w)} \cdot (2^{n+1} \alpha |Q_0| )^{1/2} \left< |f|\right>_{Q_0}\\
	&\leq C_{n,\alpha} \|b\|_{BMO_{\cd}(w)} \left< |f|\right>_{Q_0} w(Q_0).
	\end{align*}
Then we have
	\begin{align} \label{E:Temp2}
	&\sum_{Q\in\cd(Q_0)} |\langle b, h_Q^\ep\rangle| |\langle f, h_Q^\ep\rangle|\\
	& \quad\leq
		C_{n,\alpha} \|b\|_{BMO_{\cd}(w)} \left< |f|\right>_{Q_0} w(Q_0) + \sum_{R\in\ce} \sum_{Q\in\cd(R)} |\left\langle b, h_Q^\ep\right\rangle| |\langle f, h_Q^\ep\rangle|.\nonumber
	\end{align}
Form a sparse collection $\cs \subset \cd(Q_0)$ by adding $Q_0$ as the first cube, letting $R \in \ce$ be the $\cs$-children of $Q_0$,
and recursing on the second term in \eqref{E:Temp2}. Then we have
	\begin{align*}
	\sum_{Q\in\cd(Q_0)} |\langle b, h_Q^\ep\rangle| |\langle f, h_Q^\ep\rangle| & \leq
		C_{n,\alpha} \|b\|_{BMO_{\cd}(w)} \sum_{Q\in\cs} \left< |f|\right>_Q w(Q)\\
		&= C_{n,\alpha} \|b\|_{BMO_{\cd}(w)} \int_{Q_0} \bigg( \sum_{Q\in\cs} \left< |f|\right>_Q \unit_Q(x) \bigg) \,w(x)dx \\
		&= C_{n,\alpha} \|b\|_{BMO_{\cd}(w)} \int_{Q_0} \ca_{\cs}|f|\,w(x)dx,
	\end{align*}
where $\ca_{\cs}|f| := \sum_{Q\in\cs} \left< |f|\right>_Q\unit_Q$. Then, by \eqref{E:SparseAp}:
	\begin{align*}
	\int_{Q_0} \ca_{\cs} |f|\, w(x)dx &= \int_{Q_0} \ca_{\cs} |f| w^{1/r'} w^{1/r}\,dx \\
	&\leq \| \ca_{\cs} |f| \|_{L^{r'}(w)} w(Q_0)^{1/r} \\
	& \leq C_{n, \alpha, p, r} [w]_{A^{r'}}^{\max\{1, \frac{1}{r'-1}\}} \|f\|_{L^{r'}(w)} w(Q_0)^{1/r}\\
	& \leq C_{n, \alpha, p, r} [w]_{A^p}^{\max\{1, \frac{1}{p-1}\}} \|f\|_{L^{r'}(w)} w(Q_0)^{1/r},
	\end{align*}
and the result follows by choosing some value for $\alpha$, say $\alpha = 2$.
%
%
%
\end{proof}

\subsection{\texorpdfstring{The Weighted Hardy Space $H^1_{w,wavelet}(\mathbb{R}^n)$ via Daubechies Wavelets}{The Weighted Hardy Space via Daubechies Wavelets}}

We recall the Daubechies wavelets \cite{D}, and the
weighted Hardy space $H^1_{w,wavelet}(\mathbb{R}^n)$, weighted Carleson measure space $CM_w(\R^n)$ from Wu \cite{Wu}.

%


The compactly supported wavelets discussed in  \cite{D} is  as follows: for any $m\in \mathbb Z^+:=\{0\}\cup \mathbb N$, there is a collection of functions
$\{ \psi^\epsilon, \phi: \epsilon = 1,2,\ldots, 2^{n}-1 \}$ on $\R^n$ such that

\begin{itemize}
  \item [a)] $\psi^\varepsilon\in C^1$;
  \item [b)] $\psi^\varepsilon$ is compactly supported;
  \item [c)]  The collection $\{2^{jn}/2\psi^\varepsilon(2^jx-\gamma):\, j\in\mathbb Z, \, \gamma\in\mathbb Z^n\}$,
  and $\varepsilon\in\{1, 2,\,\cdots, 2^n-1\}$ form an orthonormal basis of $L^2(\mathbb R^n)$;
  \item [d)]$\int_{\mathbb R^n}\psi^\varepsilon(x)x^kdx=0$, for $k\in\{0,\,1,\,\cdots, m\}$;
  \item [e)] $\phi$ is continuous and compactly supported;
  \item [f)] For every $1\le\varepsilon< 2^n$, $\psi^\epsilon(x)$ is a finite linear
  combination of $\{\phi(x-\gamma), \gamma\in\mathbb Z^n\}$;
  \item [g)]$\int_{\mathbb R^n}\phi(x)dx\not=0$.
\end{itemize}
We denote by $\{\psi^\epsilon\}$ the wavelet system of order $m$.

\begin{definition}[\cite{Wu}]\label{def: Wu H1}
Suppose $m\in\mathbb Z^+$, $1<p<\infty $ and $w\in A^p(\mathbb R^n)$.
Suppose $\{ \psi^\epsilon \}$ is the wavelet system of order $m$.
The
weighted Hardy space $H^1_{w,wavelet}(\mathbb{R}^n)$ is defined as follows
$$ H^1_{w,wavelet}(\mathbb{R}^n):=\{ f\in L^1_w(\R^n): S_\psi(f) \in L^1_w(\R^n)  \}, $$
where
$$ S_\psi(f)(x) =\bigg( \sum_{Q\, {\rm dyadic}}\sum_\epsilon |\langle f, \psi_Q^\epsilon \rangle|^2 {  \unit_{2Q}(x)\over |Q| } \bigg)^{1\over2}. $$
\end{definition}

Next, we prove the key result in this subsection, which has not been addressed in \cite{Wu}
\begin{theorem}\label{t: H1waveletcharacterization}
Suppose $m\in\mathbb Z^+$, $1<p\leq2 $ and $w\in A^p(\mathbb R^n)$.
The definition of $H^1_{w,wavelet}(\mathbb{R}^n)$ is independent of the choice of the wavelet system $\{ \psi^\epsilon \}$ of order $m$. In particular, $H^1_{w,wavelet}(\mathbb{R}^n)$ can be characterized via a wavelet system $\{ \psi^\epsilon \}$ of order $0$.
\end{theorem}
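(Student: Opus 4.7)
The plan is to identify the dual of $H^1_{w,wavelet}(\mathbb R^n)$ as ${\rm BMO}_w(\mathbb R^n)$ in a way that is uniform in the order $m$ of the underlying wavelet system, and then transfer the resulting dual equivalence back to the primal spaces via a standard density argument. The restriction $1<p\le 2$ will enter at exactly the step where the John--Nirenberg inequality is applied with $r=2$.

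Fix any wavelet system $\{\psi^\epsilon\}$ of order $m\ge 0$. Property (d) guarantees at least zero order cancellation, so Theorem \ref{t: Wu} applies and yields
\[
\bigl(H^1_{w,wavelet}(\mathbb R^n)\bigr)^* \cong CM_w
\]
with norm equivalence constants depending only on $n$, $p$, and $[w]_{A^p}$ --- crucially, not on $m$. Since $1<p\le 2$ gives $w \in A^p \subset A^2(\mathbb R^n)$, step $(2)$ of the introduction diagram provides $CM_w \cong {\rm BMO}_{w,2}(\mathbb R^n)$. Finally, $p\le 2$ forces $p'\ge 2$, so Theorem \ref{T:MW} applied with $r=2$ yields ${\rm BMO}_{w,2}(\mathbb R^n) \cong {\rm BMO}_w(\mathbb R^n)$. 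Composing these three identifications,
\[
\bigl(H^1_{w,wavelet}(\mathbb R^n)\bigr)^* \cong {\rm BMO}_w(\mathbb R^n),
\]
with constants independent of $m$.

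To transfer this to the primal spaces, fix two wavelet systems of orders $m$ and $m'$ and write $H^{(m)}$, $H^{(m')}$ for the two resulting Hardy spaces. Let $\mathcal E$ be the collection of finite linear combinations of cancellative wavelets from a single system of order $\max(m,m')$; such functions belong to $L^2(\mathbb R^n)\cap L^1_w(\mathbb R^n)$ and thus to both $H^{(m)}$ and $H^{(m')}$. For $f\in\mathcal E$ the dual identification above gives
\[
\|f\|_{H^{(m)}} \approx \sup\bigl\{|\langle f,g\rangle| : g\in {\rm BMO}_w(\mathbb R^n),\ \|g\|_{{\rm BMO}_w}\le 1\bigr\},
\]
and likewise for $H^{(m')}$, with implicit constants independent of wavelet order. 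The two square function norms are therefore equivalent on the dense subspace $\mathcal E$, and completion identifies the two spaces. The principal obstacle is to verify that the bilinear pairing realizing the duality $(H^1_{w,wavelet})^* \cong CM_w$ in Theorem \ref{t: Wu} is compatible across different wavelet orders --- so that the two suprema above are literally indexed by the same functionals --- and that $\mathcal E$ is indeed dense in each of $H^{(m)}$, $H^{(m')}$; both points should follow from inspection of the wavelet reconstruction formula used in the proof of Theorem \ref{t: Wu}, combined with property (f) relating $\psi^\epsilon$ to translates of $\phi$.
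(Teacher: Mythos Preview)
Your proposal is correct and follows essentially the same route as the paper: show that the dual of $H^1_{w,wavelet}$ is a space defined without reference to the wavelet system, and conclude. The paper is slightly more economical --- it stops at $CM_w = {\rm BMO}_{w,2}$ (your step (2), which is Proposition~\ref{prop: CM}) rather than composing further with John--Nirenberg to reach ${\rm BMO}_w$, since ${\rm BMO}_{w,2}$ is already manifestly wavelet-free --- and it leaves the transfer-to-primal step that you spell out entirely implicit; your concern about the pairing being compatible across wavelet orders is resolved by observing that in the proof of Theorem~\ref{t: Wu} the pairing is $\sum_Q \langle f,\psi_Q^\epsilon\rangle\overline{\langle g,\psi_Q^\epsilon\rangle}$, which on $L^2$ equals the standard inner product $\langle f,g\rangle$ by orthonormality.
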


To see this, we first recall the weighted Carleson measure space in \cite{Wu}.
\begin{definition} [\cite{Wu}]\label{def: Wu CMw}
Suppose  $\{ \psi^\epsilon \}$ is a wavelet system of order $m$ for any fixed $m\in\mathbb Z^+$. Then the weighted Carleson measure space $CM_w$ is defined as
$$ CM_w=\bigg\{ f\in L^1_{loc}(\mathbb R^n):  \|f\|_{CM_w}:=\sup_{P}  \bigg({1\over w(P)} \sum_{\substack{Q {\rm dyadic} \\ Q\subset P}}\sum_\epsilon  |\langle f, \psi_Q^\epsilon \rangle|^2 {  |Q|\over w(Q) } \bigg)^{1\over2} <\infty  \bigg \}.$$
\end{definition}

We now establish the duality of $H^1_{w,wavelet}(\mathbb{R}^n)$ with $CM_w(\R^n)$ with the corresponding wavelet system  $\{ \psi^\epsilon \}$ satisfying zero order cancellation only. We note that this duality result was first obtained in \cite{Wu} without clearly tracking the order of cancellation.
\begin{theorem}\label{t: Wu}
 Suppose $1<p<\infty $ and $w\in A^p(\mathbb R^n)$. Suppose $H^1_{w,wavelet}(\mathbb{R}^n)$ and $CM_w$ are both defined via the wavelet system $\{ \psi^\epsilon \}$ of order $0$. 
Then we have
$$     \big( H^1_{w,wavelet}(\mathbb{R}^n)\big)^* = CM_w.  $$
\end{theorem}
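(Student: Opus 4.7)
The proof falls naturally into two matching inclusions, which I would implement via the wavelet pairing
\begin{equation*}
\Lambda_g(f) \;:=\; \sum_{Q\ \text{dyadic}}\sum_\epsilon \langle f,\psi_Q^\epsilon\rangle\,\langle g,\psi_Q^\epsilon\rangle,
\end{equation*}
defined first on the dense subspace of finite wavelet expansions; density follows by dominated convergence applied to $S_\psi(f-f_N)\le S_\psi(f)\in L^1_w$ for the partial sums $f_N$. The plan is then to prove (i) $|\Lambda_g(f)| \lesssim \|g\|_{CM_w}\|f\|_{H^1_{w,wavelet}}$ for every $g\in CM_w$, which gives $CM_w\hookrightarrow (H^1_{w,wavelet})^*$, and (ii) every $\Lambda\in (H^1_{w,wavelet})^*$ arises this way from some $g$ with $\|g\|_{CM_w}\lesssim \|\Lambda\|$.

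For (i), I would use a weighted level-set decomposition. Set $\Omega_k:=\{S_\psi(f)>2^k\}$ and let $\widehat\Omega_k$ be the union of those maximal dyadic cubes $P$ with $|P\cap\Omega_k|>|P|/2$. Partition the dyadic cubes into layers $B_k:=\{Q:Q\subset\widehat\Omega_k,\ Q\not\subset\widehat\Omega_{k+1}\}$; by maximality every $Q\in B_k$ satisfies $|Q\cap\Omega_{k+1}^c|\ge|Q|/2$, and the $A^p$-doubling of $w$ then gives $w(Q)\lesssim w(Q\cap\Omega_{k+1}^c)$. Applying Cauchy-Schwarz inside each $B_k$ with balancing weights $w(Q)/|Q|$ and $|Q|/w(Q)$, I would control the $g$-factor using the Carleson condition summed over the maximal cubes comprising $\widehat\Omega_k$ together with weighted weak $(1,1)$ of the dyadic maximal operator, yielding $\lesssim \|g\|_{CM_w}^2\, w(\Omega_k)$; the $f$-factor is bounded by exchanging sum and integral, exploiting the $2Q$-supports and $S_\psi(f)\le 2^{k+1}$ on $\Omega_{k+1}^c$, yielding $\lesssim 2^{2k}w(\Omega_k)$. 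Summing in $k$ and invoking the layer-cake identity $\sum_k 2^k w(\Omega_k)\lesssim \int S_\psi(f)\,w$ then gives (i).

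For (ii), set $b_Q^\epsilon:=\Lambda(\psi_Q^\epsilon)$. Fix a dyadic cube $P$ and a finite $F\subset\{Q:Q\subset P\}$, and test $\Lambda$ on
\begin{equation*}
h_F \;:=\; \sum_{Q\in F}\sum_\epsilon \overline{b_Q^\epsilon}\,\frac{|Q|}{w(Q)}\,\psi_Q^\epsilon.
\end{equation*}
Since $S_\psi(h_F)$ is supported in $2P$, Cauchy-Schwarz with respect to $w\,dx$ combined with doubling ($w(2Q)\lesssim w(Q)$ and $w(2P)\lesssim w(P)$) yields $\|h_F\|_{H^1_{w,wavelet}}\lesssim w(P)^{1/2}\bigl(\sum_{Q\in F,\,\epsilon}|b_Q^\epsilon|^2|Q|/w(Q)\bigr)^{1/2}$, while by construction $\Lambda(h_F)=\sum_{Q\in F,\,\epsilon}|b_Q^\epsilon|^2|Q|/w(Q)$. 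Dividing through, letting $F\uparrow \{Q:Q\subset P\}$, and taking the supremum over $P$ gives $\|g\|_{CM_w}\lesssim \|\Lambda\|$ for the formal series $g:=\sum b_Q^\epsilon\psi_Q^\epsilon$, and by construction $\Lambda=\Lambda_g$ on the dense subspace.

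The main obstacle is direction (i): one must manufacture the weighted $L^1$-norm of $S_\psi(f)$ from a discrete coefficient sum while using only the zero-order cancellation of $\psi^\epsilon$, so every step must be carried out at the level of Haar-type coefficient sums with no recourse to polynomial subtractions or higher-moment cancellations. The feasibility of the argument rests on doubling and weighted weak $(1,1)$ of the maximal operator, both of which are available for any $w\in A^p$ with $1<p<\infty$; this is precisely why the theorem holds in that full range even though the wavelets have zero-order cancellation only.
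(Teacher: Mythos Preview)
Your approach is essentially the same as the paper's: a level-set decomposition of the square function, Cauchy--Schwarz with the balancing weights $w(Q)/|Q|$ and $|Q|/w(Q)$, and then a direct test for the converse. The paper packages the argument through auxiliary sequence spaces $s^1_w$ and $c^1_w$ with lifting/projection operators, and defines its stopping layers via the \emph{weighted} maximal function $M_w$ and weighted measure conditions $w(Q\cap\Omega_k)>w(Q)/2$, whereas you use Lebesgue-measure conditions $|P\cap\Omega_k|>|P|/2$ and then invoke $A_\infty$ to pass back to $w$-measure; for the converse the paper uses an abstract $\ell^2$-duality where you use explicit test functions $h_F$. These are cosmetic variations of the same proof.

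One wording issue: the phrase ``weighted weak $(1,1)$ of the dyadic maximal operator'' is not quite right, since the \emph{unweighted} dyadic maximal operator is weak $(1,1)$ on $L^1_w$ only for $w\in A^1$. What you actually need is $w(\widehat\Omega_k)\lesssim w(\Omega_k)$, and this follows directly from $A_\infty$: the maximal cubes $P_j$ comprising $\widehat\Omega_k$ are disjoint with $|P_j\cap\Omega_k|>|P_j|/2$, hence $w(P_j\cap\Omega_k)\gtrsim w(P_j)$, so $w(\widehat\Omega_k)=\sum_j w(P_j)\lesssim\sum_j w(P_j\cap\Omega_k)\le w(\Omega_k)$. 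Alternatively, use the $L^p_w$-boundedness of $M$ (valid for all $w\in A^p$). With that correction the argument goes through.
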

\begin{proof}
We prove this duality via the following weighted discrete sequence space $s^1_w$ and $c^1_w$ (the unweighted one was first introduced in \cite{HLL10}).

Consider the complex-valued sequence $\{s_Q\}_{Q {\rm\ dyadic}}$ indexed by the system of dyadic cubes $Q$ in $\R^n$. Define
$$s^1_w:=\big\{  \{s_Q\}:\ \|  \{s_Q\} \|_{s_w^1}<\infty \big\} \quad{\rm and}\quad  c^1_w:=\big\{  \{s_Q\}:\ \|  \{t_Q\} \|_{c_w^1}<\infty \big\}, $$
where
\begin{align*}
 \|  \{s_Q\} \|_{s_w^1}:= \bigg\|\bigg(\sum_{Q\, {\rm dyadic}}  |s_Q|^2 {  \unit_{2Q}(x)\over |Q| } \bigg)^{1\over2}\bigg\|_{L^1_w(\R^n)}\ {\rm and}
 \end{align*}
\begin{align*}
 \|  \{t_Q\} \|_{c_w^1}:=\sup_{P}\bigg({1\over w(P)} \sum_{\substack{Q {\rm dyadic} \\ Q\subset P}}  | t_Q|^2 {  |Q|\over w(Q) } \bigg)^{1\over2}.
\end{align*}
We now prove that for $w\in A^p(\mathbb R^n)$ with $1<p<\infty$,  the duality of $s_w^1$ is $c_w^1$ with respect to the inner product $\sum_{Q\, {\rm dyadic}} s_Q \cdot \overline {t}_Q$.

To see this, we first show that for each $\{t_Q\} \in c_w^1$, the linear functional
$$\ell( \{s_Q\}) := \sum_{Q\, {\rm dyadic}}   s_Q \cdot \overline{t_Q},  \quad \{s_Q\} \in s_w^1 $$
is bounded on $s_w^1$ with $\|\ell\|\leq C  \|  \{t_Q\} \|_{c_w^1}$.

In fact, for any $ \{s_Q\} \in s_w^1$, we define
\begin{align}
\Omega_k&:=\bigg\{ x\in \R^n: \bigg(\sum_{Q\, {\rm dyadic}}  |s_Q|^2 {  \unit_{2Q}(x)\over |Q| } \bigg)^{1\over2} >2^k\bigg\};\nonumber\\
\widetilde{\Omega}_k&:=\left\{x\in\R^n:\ M_w(\unit_{\Omega_k})(x) >{1\over2}\right\};\label{Omegakt}\\
B_k&:=\{ Q \,{\rm dyaidc}: w(Q\cap \Omega_k)>{w(Q)}/2,\ w(Q\cap \Omega_{k+1})\leq{w(Q)}/2 \}.\label{Bk}
\end{align}
Here $M_w$ is the classical weighted Hardy--Littlewood maximal function on $\mathbb R^n$. Then from H\"older's inequality we have
\begin{align*}
|\ell( \{s_Q\})|&\leq
\bigg|\sum_k \sum_{ \overline Q\in B_k,\ {\rm maximal}}  \sum_{\substack{  Q\in B_k\\ Q\subset \overline Q }}  s_Q \cdot \overline{t_Q}\bigg|\\
&\leq
\sum_k \sum_{ \overline Q\in B_k,\ {\rm maximal}}  \Big(\sum_{\substack{  Q\in B_k\\ Q\subset \overline Q }}  |s_Q|^2 {w(Q)\over |Q|} \Big)^{1\over2} \Big(\sum_{\substack{  Q\in B_k\\ Q\subset \overline Q }}   |\overline{t_Q}|^2 {|Q|\over w(Q)} \Big)^{1\over 2}\\
&\leq  \|  \{t_Q\} \|_{c_w^1} \sum_k \sum_{ \overline Q\in B_k,\ {\rm maximal}}  w(\overline Q  )^{1\over2} \Big(\sum_{\substack{  Q\in B_k\\ Q\subset \overline Q }}  |s_Q|^2 {w(Q)\over |Q|} \Big)^{1\over2}\\
&\leq  \|  \{t_Q\} \|_{c_w^1} \sum_k \bigg(\sum_{ \overline Q\in B_k,\ {\rm maximal}}  w(\overline Q  )\bigg)^{1\over2} \bigg(\sum_{ \overline Q\in B_k,\ {\rm maximal}} \sum_{\substack{  Q\in B_k\\ Q\subset \overline Q }}  |s_Q|^2 {w(Q)\over |Q|} \bigg)^{1\over2}\\
&\leq  \|  \{t_Q\} \|_{c_w^1} \sum_k   w(\widetilde {\Omega}_k  )^{1\over2} \bigg(\sum_{Q\in B_k}   |s_Q|^2 {w(Q)\over |Q|} \bigg)^{1\over2}.
\end{align*}
Next, we claim that
\begin{align}\label{eeeeee claim}
\bigg(\sum_{Q\in B_k}   |s_Q|^2 {w(Q)\over |Q|} \bigg)^{1\over2} \leq C 2^k w(\widetilde {\Omega}_k  )^{1\over2}.
\end{align}
In fact, by noting that
$$  \int_{\widetilde{\Omega}_k\backslash \Omega_{k+1}} \sum_{Q\, {\rm dyadic}}  |s_Q|^2 {  \unit_{2Q}(x)\over |Q| } w(x)dx \leq  2^{2k+2}  w(\widetilde {\Omega}_k  )  $$
and that
\begin{align*}
\int_{\widetilde{\Omega}_k\backslash \Omega_{k+1}} \sum_{Q\, {\rm dyadic}}  |s_Q|^2 {  \unit_{2Q}(x)\over |Q| } w(x)dx
&\geq \sum_{Q\in B_k}   |s_Q|^2 {w(Q\cap \big(\widetilde{\Omega}_k\backslash \Omega_{k+1})  \big)\over |Q|}\geq {1\over 2} \sum_{Q\in B_k}   |s_Q|^2 {w(Q)\over |Q|},
\end{align*}
we obtain that the claim  \eqref{eeeeee claim} holds. This yields that
\begin{align*}
|\ell( \{s_Q\})|
&\leq  C\|  \{t_Q\} \|_{c_w^1} \sum_k  2^k w(\widetilde {\Omega}_k  ) \leq C \|  \{t_Q\} \|_{c_w^1} \sum_k  2^k w( \Omega_k  ) \leq C \|  \{t_Q\} \|_{c_w^1}   \|  \{s_Q\} \|_{s_w^1},
\end{align*}
which implies that $\ell$ is a bounded linear  functional on $s_w^1$ with $\|\ell\|\leq C  \|  \{t_Q\} \|_{c_w^1}$.

Conversely, for any bounded linear functional $\ell$ on $s_w^1$, following the argument in \cite{HLL10}*{p. 673, proof of Theorem 4.2}, there exists a unique sequence $\{t_Q\}$ such that
$ \ell(\{s_Q\}) =\sum_{Q\, {\rm dyadic}} s_Q \cdot \overline {t}_Q  $ for $\{s_Q\}\in s_w^1$.
Now it suffices to prove that this $\{t_Q\}$ is in $c^1_w$ with $ \|\{t_Q\}\|_{c^1_w} \leq C\|\ell\|$.

To see this, for any fixed dyadic cube $P$, we consider $\mathcal D_P =\{  Q \}_{Q\, {\rm dyadic}, Q\subset P}$ and we
define the measure on  $\mathcal D_P$ by $dm(Q)= {|Q|\over w(P)}$, $Q\in \mathcal D_P$. Then we have
\begin{align*}
\bigg({1\over w(P)} \sum_{\substack{Q {\rm dyadic} \\ Q\subset P}}  | t_Q|^2 {  |Q|\over w(Q) }\bigg)^{1\over 2}
&= \bigg\| \Big\{  {t_Q \over w(Q)^{1\over2}}  \Big\} \bigg\|_{\ell^2( \mathcal{D}, dm )}\\
&=\sup_{ \{s_Q\}: \|\{s_Q\}\|_{\ell^2( \mathcal{D}, dm )} \leq 1} \bigg| \sum_{ \substack{Q\,{\rm dyadic} \\ Q\subset P}} s_Q  { |Q|\over w(P) w(Q)^{1\over2} } \cdot \overline{t_Q} \bigg|\\
&\leq  \sup_{ \{s_Q\}: \|\{s_Q\}\|_{\ell^2( \mathcal{D}, dm )} \leq 1} \|\ell\| \bigg\| \bigg\{ { s_Q|Q|\over w(P) w(Q)^{1\over2} } \bigg\}_{\substack{Q\,{\rm dyadic} \\ Q\subset P}} \bigg\|_{s^1_w},
\end{align*}
where the inequality above follows from the fact that $\ell$ is a bounded linear functional on $s_w^1$ and the fact that when  $\{s_Q\}$ is in $\ell^2( \mathcal{D}, dm )$, 
$\bigg\{ { s_Q|Q|\over w(P) w(Q)^{1\over2} } \bigg\}_{\substack{Q\,{\rm dyadic} \\ Q\subset P}}$ is in $s_w^1$ with the norm
\begin{align*}
 \bigg\| \bigg\{ { s_Q|Q|\over w(P) w(Q)^{1\over2} } \bigg\}_{\substack{Q\,{\rm dyadic} \\ Q\subset P}} \bigg\|_{s^1_w}
 & = {1\over w(P)} \int_P\Big( \sum_{\substack{Q\,{\rm dyadic} \\ Q\subset P}}  { |s_Q|^2 |Q|\over w(Q) } \unit_{2Q}(x) \Big)^{1\over 2} w(x)dx\\
 & \leq \bigg({1\over w(P)} \int_P \sum_{\substack{Q\,{\rm dyadic} \\ Q\subset P}}  { |s_Q|^2 |Q|\over w(Q) } \unit_{2Q}(x)  w(x)dx\bigg)^{1\over2}\\
&  \leq C \|\{s_Q\}\|_{\ell^2( \mathcal{D}, dm )} <\infty.
 \end{align*}
As a consequence, we get that
\begin{align*}
\bigg({1\over w(P)} \sum_{\substack{Q {\rm dyadic} \\ Q\subset P}}  | t_Q|^2 {  |Q|\over w(Q) }\bigg)^{1\over 2}
&\leq C\|\ell\| \sup_{ \{s_Q\}: \|\{s_Q\}\|_{\ell^2( \mathcal{D}, dm )} \leq 1} \|\{s_Q\}\|_{\ell^2( \mathcal{D}, dm )}\leq C\|\ell\|.
\end{align*}
By taking the supremum over all dyadic cubes $P$, we obtain that
$   \|\{t_Q\}\|_{c^1_w} \leq C\|\ell\|.$

Combining these two parts, we obtain that  the duality of $s_w^1$ is $c_w^1$.

We now define the lifting operator $T_L$ and projection operator $T_P$ as follows:

\quad for any locally integrable function $f$, we define $T_L(f) = \{ \sum_\epsilon \langle f, \psi_Q^\epsilon \rangle\}_{Q\, {\rm dyadic}}$;

\quad for any complex sequence $\{s_Q\}_{Q\, {\rm dyadic}}$, we define $T_P(  \{s_Q\}) = \sum_Q \sum_\epsilon s_Q \psi_Q^\epsilon$.

We now show that $T_L$ maps $H^1_{w,wavelet}(\R^n)$ to $s^1_w$ and $CM_w(\R^n)$ to $c^1_w$.
In fact, for any  $f\in H^1_{w,wavelet}(\R^n)$, by definition, we obtain that
$T_L(f) $ is in $s^1_w$ with $\|T_L(f)\|_{s^1_w} \leq C\|f\|_{H^1_{w,wavelet}(\R^n)}$ and similarly
for any  $b\in CM_w(\R^n)$, by definition, we obtain that
$T_L(b) $ is in $c^1_w$ with $\|T_L(b)\|_{c^1_w} \leq C\|f\|_{CM_w(\R^n)}$.

Next we show that $T_P$ maps $s^1_w$ to $H^1_{w,wavelet}(\R^n)$  and $c^1_w$ to $CM_w(\R^n)$.
To see this, for any $\{s_Q\} \in s^1_w$,  we have
\begin{align*}
\| T_P(\{s_Q\}) \|_{H^1_{w,wavelet}(\R^n)} &=\bigg\|\bigg( \sum_{Q'\, {\rm dyadic}}\sum_{\epsilon'} \Big|\Big\langle \sum_Q \sum_\epsilon s_Q \psi_Q^\epsilon, \psi_{Q'}^{\epsilon'} \Big\rangle\Big|^2 {  \unit_{2Q'}(x)\over |Q'| } \bigg)^{1\over2}\bigg\|_{L^1_w(\R^n)}\\
&\leq C\|\{s_Q\} \|_{s^1_w},
\end{align*}
where the inequality follows from the orthonormality property of the wavelet basis $\{\psi_Q^\epsilon\}$.
Similarly, we obtain that for any $\{t_Q\} \in c^1_w$,
$$\| T_P(\{t_Q\}) \|_{CM_{w,}(\R^n)} \leq C\|\{t_Q\} \|_{c^1_w}.$$

Thus, from the duality of the weighted sequence spaces $s^1_w$ and $c^1_w$,  the boundedness of the
lifting operator $T_L$ and projection operator $T_P$, and the fact that
$T_L \circ T_P $ is the identity operator, we obtain that the duality of $H^1_{w,wavelet}(\mathbb{R}^n)$ is $CM_w$.
\end{proof}

We now prove the following proposition.
\begin{prop}\label{prop: CM}
Suppose $1<p\leq2 $ and $w\in A^p(\mathbb R^n)$.
For any wavelet system $\{ \psi^\epsilon \}$ of order $m$ with $m\in\mathbb Z^+$, we have that the Carleson measure space $CM_w $ defined via $\{ \psi^\epsilon \}$ coincides with ${\rm BMO}_{w,2}(\R^n)$, i.e.,
$$ CM_w = {\rm BMO}_{w,2}(\R^n) $$
and they have equivalent norms. This implies $ \big( H^1_{w,wavelet}(\mathbb{R}^n)\big)^* = {\rm BMO}_{w,2}(\R^n)$.
\end{prop}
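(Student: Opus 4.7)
The plan is to prove both inclusions directly, establishing $\|f\|_{CM_w}\approx \|f\|_{{\rm BMO}_{w,2}(\R^n)}$. The hypothesis $p\leq 2$ makes $w,w^{-1}$ both $A^2$-weights (hence doubling), so the Daubechies wavelet system yields the weighted Plancherel
\begin{equation*}
\|g\|_{L^2_\sigma}^2 \approx \sum_{Q,\epsilon}|\langle g, \psi_Q^\epsilon\rangle|^2 \frac{\sigma(Q)}{|Q|},\qquad \sigma\in\{w,w^{-1}\},
\end{equation*}
together with the $A^2$-comparability $|Q|/w(Q)\approx w^{-1}(Q)/|Q|$. These, plus Theorem \ref{t: Wu} (Wu's duality) and Theorem \ref{T:MW} (John--Nirenberg), are the main tools.

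For ${\rm BMO}_{w,2}\hookrightarrow CM_w$, I fix a dyadic cube $P$ and apply $\ell^2$-duality:
\begin{equation*}
\Big(\sum_{Q\subset P,\epsilon}|\langle f,\psi_Q^\epsilon\rangle|^2\tfrac{|Q|}{w(Q)}\Big)^{1/2} = \sup_{\|\mu\|_{\ell^2}\leq 1}\Big|\int f\,g_\mu\,dx\Big|,\quad g_\mu:=\sum_{Q\subset P,\epsilon}\mu_Q^\epsilon\sqrt{\tfrac{|Q|}{w(Q)}}\,\psi_Q^\epsilon.
\end{equation*}
By compact support and vanishing mean of the wavelets, $g_\mu$ is mean-zero and supported in a fixed dilate $C_0 P$, so $\int f\,g_\mu = \int(f-\langle f\rangle_{C_0 P})g_\mu$. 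H\"older with weight split $w^{1/2}\cdot w^{-1/2}$ bounds this by $\|f-\langle f\rangle_{C_0 P}\|_{L^2_{w^{-1}}(C_0 P)}\|g_\mu\|_{L^2_w}$. The first factor is $\lesssim\|f\|_{{\rm BMO}_{w,2}}w(P)^{1/2}$ directly from the definition of ${\rm BMO}_{w,2}$ (with $r=2$, legitimate since $2\leq p'$ by Theorem \ref{T:MW}) and doubling, while $\|g_\mu\|_{L^2_w}^2 \approx \sum|\mu_Q^\epsilon|^2(|Q|/w(Q))(w(Q)/|Q|) = \sum|\mu_Q^\epsilon|^2\leq 1$ by the weighted Plancherel and orthonormality (the $|Q|/w(Q)$ factor in $g_\mu$ cancels the $w(Q)/|Q|$ in the Plancherel sum).

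For $CM_w\hookrightarrow{\rm BMO}_{w,2}$, for a cube $P$ I use $L^2_{w^{-1}}$--$L^2_w$ duality:
\begin{equation*}
\|(f-\langle f\rangle_P)\unit_P\|_{L^2_{w^{-1}}} = \sup\Big|\int f\,h\,dx\Big|
\end{equation*}
over $h$ supported in $P$ with $\int h=0$ and $\|h\|_{L^2_w}\leq 1$ (the mean-zero restriction costs only a factor of two, since $\int_P(f-\langle f\rangle_P)=0$ lets me replace $h$ by $h-\langle h\rangle_P\unit_P$ with controlled $L^2_w$-norm via $A^2$). Theorem \ref{t: Wu} then gives $|\int f\,h|\leq\|f\|_{CM_w}\|h\|_{H^1_{w,wavelet}}$, reducing the task to $\|h\|_{H^1_{w,wavelet}}=\|S_\psi h\|_{L^1_w}\lesssim w(P)^{1/2}$. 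Splitting the integral over a dilate $C_0 P$ and its complement: on $C_0 P$, Cauchy-Schwarz with the weighted Plancherel yields $\|S_\psi h\|_{L^2_w}w(C_0 P)^{1/2}\lesssim \|h\|_{L^2_w}w(P)^{1/2}\leq w(P)^{1/2}$ by doubling; on the complement, the mean-zero of $h$ combined with wavelet smoothness and vanishing moments produces pointwise decay of $S_\psi h$.

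The hard part will be the exterior estimate in the second direction, where the na\"ive bound from one vanishing moment of $h$ and one derivative of $\psi^\epsilon$ yields $S_\psi h(x)\lesssim d(x,P)^{-(n+1)}l(P)^{n+1}w(P)^{-1/2}$, only marginally summable against the $A^p$-doubling growth of $w$. A cleaner route I would take is to note that $h/w(P)^{1/2}$ functions as a $(2,w,0)$-atom supported in $P$, and to verify by direct computation (the ``easy half'' of step (4) in the paper's outline) that $\|a\|_{H^1_{w,wavelet}}\lesssim 1$ for every such atom $a$; this uses only the local $L^2_w$-boundedness of $S_\psi$ on a dilate of the atom's support together with the cancellation of $a$, and does not require full control of exterior decay against generic $A^p$ growth.
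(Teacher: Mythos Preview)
Your argument is correct but takes a longer, duality-based route than the paper's. The paper handles both inclusions in a single stroke: for a fixed cube $Q$ it writes $(f-\langle f\rangle_Q)\unit_Q=\sum_{P\subset Q,\epsilon}\langle f,\psi_P^\epsilon\rangle\psi_P^\epsilon=:F_Q$ and applies the weighted Littlewood--Paley equivalence $\|F_Q\|_{L^2(w^{-1})}\approx\|S_\psi F_Q\|_{L^2(w^{-1})}$ (valid because $w^{-1}\in A^2$ when $p\le 2$) to obtain
\[
\int_Q|f-\langle f\rangle_Q|^2\,w^{-1}\,dx\ \approx\ \sum_{P\subset Q,\epsilon}|\langle f,\psi_P^\epsilon\rangle|^2\,\langle w^{-1}\rangle_P\ \approx\ \sum_{P\subset Q,\epsilon}|\langle f,\psi_P^\epsilon\rangle|^2\,\frac{|P|}{w(P)},
\]
the last step being the $A^2$ comparability $\langle w^{-1}\rangle_P\approx 1/\langle w\rangle_P$. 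Dividing by $w(Q)$ and taking the supremum over $Q$ gives $\|f\|_{{\rm BMO}_{w,2}}\approx\|f\|_{CM_w}$ directly. This is essentially the same ``weighted Plancherel plus $A^2$'' input you identified at the outset, but deployed once rather than twice.

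Your proof instead linearises each side separately: $\ell^2$-duality for ${\rm BMO}_{w,2}\hookrightarrow CM_w$, and for the reverse you pass through Theorem~\ref{t: Wu} and are then obliged to show that a mean-zero $h$ with $\|h\|_{L^2_w}\le 1$ supported in $P$ satisfies $\|h\|_{H^1_{w,wavelet}}\lesssim w(P)^{1/2}$---which is exactly the statement that $(1,2,0)$-atoms lie uniformly in $H^1_{w,wavelet}$. That is true (and is part of step~(4) in the paper's outline), but it is extra work the paper's one-line computation sidesteps entirely. The paper's route also makes the role of $p\le 2$ transparent: it enters only to guarantee $w^{-1}\in A^2$ so that the $L^2(w^{-1})$ square-function bound applies. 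Your route is more modular---it would survive if one only knew the $H^1$--$CM_w$ duality abstractly---but here the direct computation is both shorter and cleaner.
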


\begin{proof}
To see this, it suffices to show that for $f\in L^1_{loc}(\R^n)$,
 $$\|f\|_{{\rm BMO}_{w,2}(\R^n)}\approx \|f\|_{CM_w}.$$
%
%
Fix a cube $Q$ and expanding $f$ via $\psi^\epsilon$. We see that
$$(f-\left< f\right>_Q)\unit_Q=\sum_{\substack{P {\rm\ dyaidc}\\ P\subset Q}}  \langle f, \psi_P^\epsilon\rangle \psi^\epsilon_P=:F_Q.$$
Next, from the property of an $A^2(\mathbb{R}^n)$ weight and the upper and lower bounds of the wavelet square function $S_\psi$, we have that
\begin{align}
\int_Q|f(x)-\left< f\right>_Q|^2w^{-1}(x)dx&=\|F_Q\|_{L^2(w^{-1})}^2\nonumber\\
&\approx  \|S_\psi(F_Q)\|^2_{L^2(w^{-1})}\label{eeee1}\\
&=\sum_{\substack{P {\rm\ dyaidc}\\ P\subset Q}}|\langle f, \psi_P^\epsilon\rangle|^2\langle w^{-1}\rangle_P\nonumber\\
&\approx C\sum_{\substack{P {\rm\ dyaidc}\\ P\subset Q}}|\langle f, \psi_P^\epsilon\rangle|^2\frac1{\langle w\rangle_P},\label{eeee2}
\end{align}
which implies that
$$ \|f\|_{CM_w}\approx \|f\|_{{\rm BMO}_{w,2}(\R^n)}, $$
and hence we have ${\rm BMO}_{w,2}(\R^n)= CM_w$ with equivalent norms.

Here we note that \eqref{eeee2} follows from the fact that  $w\in A^p(\mathbb R^n)$ for some $1<p\leq2$, then
$w\in A^2(\mathbb{R}^n)$. To be more precise, for any dyadic cube $P$, we have
\begin{align*}
1& ={1\over |P|}\int_P w(x)^{1\over 2}w(x)^{-{1\over 2}} dx \leq \bigg({1\over |P|}\int_P w(x)dx\bigg)^{1\over 2} \bigg({1\over |P|}\int_P w(x)^{-1} dx\bigg)^{1\over 2}\\
&\leq [w]_{A^2}^{1\over 2}\leq C[w]_{A^p}^{1\over 2}.
\end{align*}
By taking the square on both sides, we have $ 1\leq \langle w\rangle_P \langle w^{-1}\rangle_P \leq C[w]_{A^p}$, which implies that
 \eqref{eeee2}  holds.

The equivalence in \eqref{eeee1} is a standard result (see for example, \cite{GM}*{Theorem 4.16}), which requires a wavelet system $\{ \psi^\epsilon \}$ of order $0$ only, i.e.,
the wavelet function satisfies only the zero order cancellation:
 $\int_{\mathbb R^n}\psi^\varepsilon(x)dx=0.$
This finishes the proof of Proposition \ref{prop: CM}.
\end{proof}

\begin{proof}[\bf Proof of Theorem  \ref{t: H1waveletcharacterization}]

Suppose $1<p\leq2 $ and $w\in A^p(\mathbb R^n)$.
For any wavelet system $\{ \psi^\epsilon \}$ of order $m$ with $m\in\mathbb Z^+$, we have the Carleson measure space $CM_w $ defined via $\{ \psi^\epsilon \}$.

Now from Proposition \ref{prop: CM}, we see that $ CM_w = {\rm BMO}_{w,2}(\R^n) $
and they have equivalent norms.
Since the definition of ${\rm BMO}_{w,2}(\R^n)$ is independent of the wavelet system $\{ \psi^\epsilon \}$,
we see that weighted Carleson measure space $CM_w$ is  independent of the wavelet system $\{ \psi^\epsilon \}$, which, together with Theorem \ref{t: Wu},  shows that
Theorem \ref{t: H1waveletcharacterization} holds.
\end{proof}

\subsection{\texorpdfstring{The Weighted Hardy Space $H^1_{w}(\mathbb{R}^n)$ via the Littlewood--Paley area function}{The Weighted Hardy Space via the Littlewood--Paley area function}}

Suppose $\beta$ is a non-negative integer.
We denote by $\mathcal R^\beta$ the class of functions $\varphi$ such that
 $\varphi\in\mathcal S(\R^n)$ with
$$ \int_{\R^n} \varphi(x)  x^\alpha dx=0 $$
for the multi-index $\alpha$ satisfying $|\alpha|\leq \beta$,
  and that
$$ \int_0^\infty |\hat{\varphi}( t \xi)|^2 {dt\over t}=C_\varphi\not=0,\quad \xi\not=0. $$
And we have the standard dilation
$\varphi_t(x)  = t^{-n} \varphi( {x\over t} )$.

We recall the Littlewood--Paley area function as
\begin{align}\label{Sf-phi}
S_{\varphi,\beta}(f)(x) =\bigg( \iint_{\Gamma(x)} |\varphi_t *f(x)|^2 {dydt\over t^{n+1}} \bigg)^{1\over2},
\end{align}
where $\varphi$ is in $\mathcal R^{\beta}$.

\begin{definition}\label{def: MW Hardy}
Suppose $1<p<\infty$, $w\in A^p(\mathbb R^n)$ and
$\varphi \in \mathcal R^\beta$ with $\beta\geq \lfloor np\rfloor-n$,
where $\lfloor \alpha\rfloor$  for a given $\alpha\in\mathbb R$
is the biggest integer $k$ such that $k\le \alpha$.
Let the square function $S_{\varphi,\beta}(f)$ be defined via $\varphi$ as in \eqref{Sf-phi}.
We define the weighted Hardy space $H^1_{w,\varphi,\beta}(\R^n)$
as
$ H^1_{w,\varphi,\beta}(\R^n):=\{ f\in L^1_w(\mathbb R^n):\ S_{\varphi,\beta}(f)\in L^1_w(\R^n)\} $
with the norm given by $\|f\|_{H^1_{w,\varphi,\beta}(\R^n)}:=\|S_{\varphi,\beta}(f)\|_{L^1_w(\R^n)}$.
\end{definition}

We also recall the atoms for the weighted Hardy spaces.
\begin{definition}\label{def: weighted atom}
Suppose $1<p<\infty$, $w\in A^p(\mathbb R^n)$, and $\beta\geq \lfloor np\rfloor-n$.
A function $a$ is called a $(1,p,\beta)$-atom, if
there exists a cube $Q\subset\mathbb R^n$ such that
\begin{align*}
&(1)\ \ {\rm {supp}}(a)\subset Q;\\
&(2)\ \ \int_Q a(x)\, x^\alpha dx=0, \quad {\rm for\ multi-index\ } \alpha {\rm\ with\ } |\alpha|\leq \beta;\\
&(3)\ \ \|a\|_{L^p_{w(Q)}}\le \left[w(Q)\right]^{{1\over p}-1}.
\end{align*}
\end{definition}

\begin{definition}\label{def: weighted atom Hardy}
Suppose $1<p<\infty$, $w\in A^p(\mathbb R^n)$, and $\beta\geq \lfloor np\rfloor-n$.
A function $f$ is said to belong to the space $H^{1,\,p,\,\beta}_{w}(\mathbb{R}^n)$, if
$f=\sum_{j=1}^\infty \lambda_j a_j$
with $\sum_{j=1}^\infty|\lambda_j|<\infty$ and $a_j$ is a $(1,p,\beta)$-atom
for each $j$. Moreover, the norm of $f$ on $H^{1,\,p,\,\beta}_{w}(\mathbb{R}^n)$ is defined by
$$\|f\|_{H^{1,\,p,\,\beta}_{w}(\mathbb{R}^n)}=\inf\left\{\sum_{j=1}^\infty|\lambda_j|\right\},$$
where the infimum is taken over all possible decompositions of $f$ as above.

\end{definition}

Then we have the following theorem.
\begin{theorem}\label{t: Hardytatom}
Suppose $1<p<\infty$, $w\in A^p(\mathbb R^n)$, and $\beta\geq \lfloor np\rfloor-n$.
Then we have $  H^1_{w,\varphi,\beta}(\R^n) =H^{1,\,p,\,\beta}_{w}(\mathbb{R}^n) $
with equivalent norms.
\end{theorem}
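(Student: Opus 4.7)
The plan is to prove the two inclusions separately, adapting the classical Fefferman--Stein equivalence between the area function and atomic Hardy spaces to the Muckenhoupt-weighted setting, following an approach that goes back to the unweighted theory and has weighted analogues due to Str\"omberg--Torchinsky and Garc\'ia-Cuerva.

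\medskip

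\emph{Direction (i): $H^{1,p,\beta}_w(\mathbb{R}^n) \hookrightarrow H^1_{w,\varphi,\beta}(\mathbb{R}^n)$.} By a standard reduction, it suffices to produce a uniform constant $C$ with $\|S_{\varphi,\beta}(a)\|_{L^1_w(\mathbb{R}^n)} \leq C$ for every $(1,p,\beta)$-atom $a$ supported in a cube $Q$ centered at $x_Q$. I split $\int_{\mathbb{R}^n} = \int_{2Q} + \int_{(2Q)^c}$. On $2Q$, apply H\"older with pair $(p,p')$, invoke the $L^p_w$-boundedness of $S_{\varphi,\beta}$ (classical for $\varphi \in \mathcal{R}^\beta$ and $w \in A^p$), the atom size bound $\|a\|_{L^p_w} \leq w(Q)^{1/p-1}$, and doubling of $A^p$ weights to absorb $w(2Q)^{1-1/p}$. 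On $(2Q)^c$, Taylor-expand $\varphi_t(x-y)$ in $y$ around $x_Q$ up to order $\beta$ and use the vanishing moments of $a$ to obtain the pointwise tail bound
\[
S_{\varphi,\beta}(a)(x) \lesssim \frac{\ell(Q)^{\beta+1}\|a\|_{L^1}}{|x-x_Q|^{n+\beta+1}},\qquad x \notin 2Q.
\]
The hypothesis $\beta \geq \lfloor np\rfloor - n$ gives just enough decay for this tail to integrate against $w \in A^p$, whose mass on $B(x_Q,r)$ grows at most like a power controlled by $np$.

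\medskip

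\emph{Direction (ii): $H^1_{w,\varphi,\beta}(\mathbb{R}^n) \hookrightarrow H^{1,p,\beta}_w(\mathbb{R}^n)$.} Starting from Calder\'on's reproducing formula
\[
f = \frac{1}{C_\varphi} \int_0^\infty \varphi_t * \varphi_t * f\,\frac{dt}{t},
\]
I implement the tent-space atomic decomposition. Set $\Omega_k := \{S_{\varphi,\beta}(f) > 2^k\}$, $\widetilde{\Omega}_k := \{M(\unit_{\Omega_k}) > 1/2\}$, and take a Whitney decomposition $\{Q_{k,j}\}_j$ of $\widetilde{\Omega}_k$. Split the upper half-space into strata $T_k := \widehat{\widetilde{\Omega}_k} \setminus \widehat{\widetilde{\Omega}_{k+1}}$, where $\widehat{E}$ denotes the tent over $E$. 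Restricting the Calder\'on integral to each $T_k$ intersected with the Carleson box above $Q_{k,j}$ yields candidate atoms $a_{k,j}$ (supported in a fixed dilate of $Q_{k,j}$ and inheriting the $\beta$-order cancellation from $\varphi$), with coefficients $\lambda_{k,j} \sim 2^k w(\widetilde{Q}_{k,j})$. The bound $\sum_{k,j}|\lambda_{k,j}| \lesssim \|S_{\varphi,\beta}(f)\|_{L^1_w}$ follows by layer-cake and the bounded overlap of the Whitney cubes; the size condition $\|a_{k,j}\|_{L^p_w} \lesssim w(\widetilde{Q}_{k,j})^{1/p-1}$ is verified by $L^{p'}_{w^{1-p'}}$-duality against the tent-space pairing.

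\medskip

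The main obstacle will be the size estimate on $a_{k,j}$ in direction (ii): pinning down the sharp weighted constant $w(\widetilde{Q}_{k,j})^{1/p-1}$ forces a careful weighted tent-space duality, and it is precisely here that the order condition $\beta \geq \lfloor np\rfloor - n$ is consumed, to prevent polynomial growth of $w$ on dilates of $Q_{k,j}$ from overwhelming the Taylor-remainder decay of $\varphi$. An attractive alternative that sidesteps much of this technical work is to pass through the wavelet model: compare $S_{\varphi,\beta}$ with a Daubechies wavelet square function of matching vanishing-moment order via almost-orthogonality to identify $H^1_{w,\varphi,\beta}(\mathbb{R}^n)$ with $H^1_{w,wavelet}(\mathbb{R}^n)$, and then invoke Theorem \ref{t: H1waveletcharacterization} together with the coincidence (4) from the paper's roadmap.
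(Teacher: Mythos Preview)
Your proposal is essentially the paper's Chang--Fefferman argument, with only cosmetic variations in direction (ii): the paper uses the \emph{weighted} maximal function $M_w$ to define $\widetilde\Omega_k$, organizes the decomposition via dyadic collections $B_k=\{Q: w(Q\cap\Omega_k)>w(Q)/2,\ w(Q\cap\Omega_{k+1})\le w(Q)/2\}$ and their maximal cubes (rather than a Whitney decomposition of $\widetilde\Omega_k$ and tent strata), and verifies the size bound on $a_{k,\overline Q}$ by duality against $h\in L^{p'}_{w'}$ using the weighted boundedness of the $\mathcal G^*_{\varphi,\beta}$ function. Both routes are standard and interchangeable.

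One point to correct: your claim that the moment condition $\beta\ge\lfloor np\rfloor-n$ is ``consumed'' in the size estimate of direction (ii) is misplaced. The size bound $\|a_{k,j}\|_{L^p_w}\lesssim w(\widetilde Q_{k,j})^{1/p-1}$ is a purely local $L^p$-duality estimate over (a dilate of) $\widetilde Q_{k,j}$; no tail integration against $w$ occurs there, and the argument is insensitive to $\beta$. The order condition is needed exactly where you first invoked it, in direction (i), to make the Taylor-remainder tail $\ell(Q)^{\beta+1}/|x-x_Q|^{n+\beta+1}$ integrable against an $A^p$ weight off $2Q$. In direction (ii) the atoms simply inherit cancellation of order $\beta$ from $\varphi$, and nothing more is required.

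Finally, your proposed shortcut through wavelets is not available here: Theorem~\ref{t: H1waveletcharacterization} is stated only for $1<p\le 2$, whereas the present theorem covers the full range $1<p<\infty$; moreover the ``coincidence (4)'' in the paper's roadmap is Theorem~\ref{t: Hardytatom wavelet}, whose proof is declared to follow the \emph{same} technique as the theorem you are proving, so invoking it would be circular.
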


\begin{proof}
This type of atomic decomposition follows a standard approach from Chang--Fefferman.
We sketch the proof as follows.  Suppose $f\in H^1_{w,\varphi,\beta}(\R^n) $.
For each $k\in\mathbb Z$, we now define
$\Omega_k:=\left\{x\in\R^n:\ S_{\varphi,\beta}(f)(x) >2^k\right\}$, and then we define $\widetilde{\Omega}_k$ and $B_k$ according to $\Omega_k$, using the same way as in \eqref{Omegakt} and \eqref{Bk}, respectively.

Then from Calder\'on's reproducing formula, we have
that
\begin{align*}
f(x) &= C_\varphi\int_0^\infty \varphi_t * \varphi_t * f(x) {dt\over t}\\
&=C_\varphi \int_0^\infty\int_{\R^n} \varphi_t (x-y) \varphi_t * f(y) {dydt\over t}\\
&=C_\varphi\sum_{Q\ {\rm dyadic}} \iint_{\widehat Q} \varphi_t (x-y) \varphi_t * f(y) {dydt\over t}\\
&=C_\varphi\sum_k \sum_{ \overline Q\in B_k,\ {\rm maximal}}  \sum_{\substack{  Q\in B_k\\ Q\subset \overline Q }}   \iint_{\widehat Q} \varphi_t (x-y) \varphi_t * f(y) {dydt\over t}\\
&=\sum_k \sum_{\overline Q\in B_k,\ {\rm maximal}}  \lambda_{k,\overline Q} a_{k,\overline Q},
\end{align*}
where $ \lambda_{k,\overline Q}:= 2^k w(\overline Q) $ and
$$a_{k,\overline Q} := {C_\varphi\over \lambda_{k,\overline Q}}  \sum_{\substack{Q\in B_k\\ Q\subset \overline Q }}   \iint_{\widehat Q} \varphi_t (x-y) \varphi_t * f(y) {dydt\over t}.$$

First, it is direct that $a_{k,\overline Q}$ is supported in $3\overline Q$ and satisfies the cancellation condition up to order $\beta$ as $\varphi$ does.
Next, by testing $a_{k,\overline Q}$ against an arbitrary function $h\in L^{p'}_{w'}(\mathbb{R}^n)$,  using the definition of
$\lambda_{k,\overline Q}$ and by using the boundedness of the $\mathcal G^\ast_{\varphi,\beta}$ function on $L^{p'}_{w'}(\mathbb{R}^n)$, which had been studied by many authors (see for example \cite{AS}, \cite{MW}),
 we obtain that
\begin{align}\label{atom norm}
\|a_{k,\overline Q}\|_{L^p_{w}(\overline{ Q})}\ls \left[w(3\overline Q)\right]^{{1\over p}-1}.
\end{align}
Thus, we see that each $a_{k,\overline Q}$ is a $(1,p,\beta)$-atom.

To be more precise for the estimate \eqref{atom norm}, we note that
\begin{align*}
\|a_{k,\overline Q}\|_{L^p_{w}(\mathbb R^n)} 
&= \sup_{h: \|h\|_{L^{p'}_{w'}(\mathbb R^n)}\leq1} |\langle a_{k,\overline Q},h \rangle|\\
&= \sup_{h: \|h\|_{L^{p'}_{w'}(\mathbb R^n)}\leq1} {1\over \lambda_{k,\overline Q}}\bigg|\sum_{\substack{ Q\in B_k\\ Q\subset \overline Q }}   \iint_{\widehat Q} \widetilde\varphi_t *h(y) \varphi_t * f(y) {dydt\over t} \bigg|\\
&\ls \sup_{h: \|h\|_{L^{p'}_{w'}(\mathbb R^n)}\leq1} {1\over \lambda_{k,\overline Q}}\int_{\overline Q\cap \Omega_{k+1}^c}\sum_{\substack{  Q\in B_k\\ Q\subset \overline Q }}   \iint_{\widehat Q} |\widetilde\varphi_t *h(y)|\,|\varphi_t * f(y) |\unit_{\{|x-y|<2t\}}(x){dydt\over t^{n+1}}dx \\
&\leq \sup_{h: \|h\|_{L^{p'}_{w'}(\mathbb R^n)}\leq1} {1\over \lambda_{k,\overline Q}}\int_{\overline Q\cap \Omega_{k+1}^c} \bigg( \sum_{\substack{ Q\in B_k\\ Q\subset \overline Q }}   \iint_{\widehat Q \cap \{|x-y|<2t\}} |\widetilde\varphi_t *h(y)|^2 {dydt\over t^{n+1}}\bigg)^{1\over2}\\
&\hskip5cm\times \bigg( \sum_{\substack{ Q\in B_k\\ Q\subset \overline Q }}   \iint_{\widehat Q \cap \{|x-y|<2t\}}|\varphi_t * f(y) |^2{dydt\over t^{n+1}}\bigg)^{1\over2}dx \\
&\ls \sup_{h: \|h\|_{L^{p'}_{w'}(\mathbb R^n)}\leq1} {1\over \lambda_{k,\overline Q}}\int_{\overline Q\cap \Omega_{k+1}^c} \mathcal G_{\varphi,\beta}^*(h)(x) S_{\varphi,\beta}(f)(x) dx,
\end{align*}
where $\widetilde\varphi(x)=\varphi(-x)$,
$$ \mathcal G_{\varphi,\beta}^*(h)(x) = \bigg( \int_0^\infty\int_{\R^n} \Big( {t\over t+|x-y|}\big)^{3n}|\widetilde\varphi_t *h(y)|^2{dydt\over t^{n+1}}  \bigg)^{1\over2} $$
and
in the first inequality above we used the fact that for $Q\in B_k$, $w(Q\cap \Omega_{k+1})\leq w(Q)/2$ which gives $|Q\cap \Omega_{k+1}|\leq C_0|Q|$ with $C_0<1$ (see \cite{Gra}*{Theorem 9.3.3(e)}) and hence further implies  $|Q\cap \Omega_{k+1}^c|> (1-C_0)|Q|$. As a consequence, from H\"older's inequality, we have
\begin{align*}
&\|a_{k,\overline Q}\|_{L^p_{w}(\overline{ Q})} \\
&\quad\ls \sup_{h: \|h\|_{L^{p'}_{w'}(\mathbb R^n)}\leq1} {1\over \lambda_{k,\overline Q}} \bigg(\int_{\overline Q\cap \Omega_{k+1}^c} \mathcal G_{\varphi,\beta}^*(h)(x)^{p'} w'(x)dx\bigg)^{1\over p'}   \bigg(\int_{\overline Q\cap \Omega_{k+1}^c} S_{\varphi,\beta}(f)(x)^p w(x) dx\bigg)^{1\over p}\\
&\quad\ls \sup_{h: \|h\|_{L^{p'}_{w'}(\mathbb R^n)}\leq1}\|h\|_{L^{p'}_{w'}} {1\over \lambda_{k,\overline Q}} 2^{k+1} w(\overline Q)^{1\over p}\\
&\quad\ls \left[w(\overline Q)\right]^{{1\over p}-1},
\end{align*}
which shows that \eqref{atom norm} holds. Moreover, it is direct to see that
$$\sum_k \sum_{\overline Q\in B_k,\ {\rm maximal}}  |\lambda_{k,\overline Q}| \ls \|S_{\varphi,\beta}(f)\|_{L^1_w(\R^n)}.$$
This shows that  $  H^1_{w,\varphi,\beta}(\R^n) \subset H^{1,\,p,\,\beta}_{w}(\mathbb{R}^n) $.

To prove the reverse inclusion, i.e.,  $  H^1_{w,\varphi,\beta}(\R^n) \supset H^{1,\,p,\,\beta}_{w}(\mathbb{R}^n) $, it suffices to prove that there exists a positive constant $C$ such that for every $(1,p,\beta)$-atom $a(x)$, we have
$$\|S_{\varphi,\beta}(a)\|_{L^1_w(\R^n)} \leq C.$$
Actually, this follows from a standard technique by decomposing the whole space $\R^n$ into annuli according to the size of the cube $2Q$, where ${\rm supp}\, a\subset Q$, and then using the cancellation condition of $a(x)$ and the smoothness condition of $\varphi$ to get a suitable decay, which guarantees the summability over these annuli. This step  requires the condition that $\beta\geq \lfloor np\rfloor-n$. We omit the details.
\end{proof}

Following similar techniques in the proof above (see also the result in \cite{GM} for dimension 1), we obtain the following.
\begin{theorem}\label{t: Hardytatom wavelet}
Suppose $1<p<\infty$, $w\in A^p(\mathbb R^n)$, and $\beta\geq \lfloor np\rfloor-n$. Also suppose that $\{ \psi^\epsilon \}$ is a wavelet system defined as in Section 4.2 with the cancellation up to order $\lfloor np\rfloor-n$.
Then we have $  H^1_{w,wavelet}(\R^n) =H^{1,\,p,\,\beta}_{w}(\mathbb{R}^n) $
with equivalent norms.
\end{theorem}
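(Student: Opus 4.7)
The proof follows the same template as that of Theorem \ref{t: Hardytatom}, with the continuous Calder\'on reproducing formula replaced by the discrete wavelet expansion
$$ f = \sum_{Q\,{\rm dyadic}}\sum_\epsilon \langle f,\psi_Q^\epsilon\rangle\psi_Q^\epsilon. $$
The plan is to use the wavelet system $\{\psi^\epsilon\}$ (which by hypothesis has cancellation of order at least $\lfloor np\rfloor-n\geq \beta$) to produce atoms, and then to verify the reverse direction by a standard annular decomposition.

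First I would prove the inclusion $H^1_{w,wavelet}(\R^n)\subset H^{1,p,\beta}_w(\R^n)$. Given $f\in H^1_{w,wavelet}(\R^n)$, define $\Omega_k:=\{x\in\R^n: S_\psi(f)(x)>2^k\}$, together with $\widetilde\Omega_k$ and $B_k$ exactly as in \eqref{Omegakt} and \eqref{Bk} of the proof of Theorem \ref{t: Hardytatom}. Group the wavelet expansion as
$$ f = \sum_k\sum_{\overline Q\in B_k,\,{\rm maximal}}\lambda_{k,\overline Q}\, a_{k,\overline Q}, \quad \lambda_{k,\overline Q}:=2^k w(\overline Q), $$
with
$$ a_{k,\overline Q}:=\frac{1}{\lambda_{k,\overline Q}}\sum_{\substack{Q\in B_k\\ Q\subset\overline Q}}\sum_\epsilon\langle f,\psi_Q^\epsilon\rangle\psi_Q^\epsilon. $$
By the compact support property (b) of the wavelet system, $a_{k,\overline Q}$ is supported in a fixed dilate of $\overline Q$, and the cancellation condition (d) with $m\geq\lfloor np\rfloor-n\geq\beta$ ensures the required vanishing moments of order $\beta$. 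The summability $\sum_{k,\overline Q}|\lambda_{k,\overline Q}|\lesssim \|S_\psi(f)\|_{L^1_w(\R^n)}$ is immediate from the definition of $\widetilde\Omega_k$ and the $A^p$ boundedness of the weighted maximal function.

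The main obstacle is the size estimate $\|a_{k,\overline Q}\|_{L^p_w(\overline Q)}\lesssim [w(\overline Q)]^{1/p-1}$. Following the continuous argument, I would test $a_{k,\overline Q}$ against $h\in L^{p'}_{w'}(\R^n)$ with $\|h\|_{L^{p'}_{w'}}\leq 1$ and write
$$ \langle a_{k,\overline Q},h\rangle = \frac{1}{\lambda_{k,\overline Q}}\sum_{\substack{Q\in B_k\\ Q\subset\overline Q}}\sum_\epsilon\langle f,\psi_Q^\epsilon\rangle\overline{\langle h,\psi_Q^\epsilon\rangle}. $$
Applying Cauchy--Schwarz on the $(Q,\epsilon)$ sum, integrated against $x\in\overline Q\cap \Omega_{k+1}^c$ (which has comparable measure to $\overline Q$ because $w(Q\cap\Omega_{k+1})\leq w(Q)/2$ for $Q\in B_k$), reduces matters to the $L^{p'}_{w'}$ boundedness of the discrete wavelet square function for $h$ and the $L^p_w$ control of $S_\psi(f)$ on $\Omega_{k+1}^c$, both of which hold because $w\in A^p$ and $w'\in A^{p'}$. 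This gives the atomic bound and hence $\|f\|_{H^{1,p,\beta}_w}\lesssim \|f\|_{H^1_{w,wavelet}}$.

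For the converse, it suffices to show $\|S_\psi(a)\|_{L^1_w(\R^n)}\leq C$ uniformly for each $(1,p,\beta)$-atom $a$ supported in a cube $Q$. On the local piece $2Q$, H\"older's inequality and the $L^p_w$ boundedness of $S_\psi$ (which follows from the $A^p$ bound for the sparse/Littlewood--Paley theory on $L^2$ and extrapolation) together with the atomic size condition give $\int_{2Q}S_\psi(a)w\lesssim 1$. On the annuli $2^{j+1}Q\setminus 2^jQ$, we use the cancellation of $a$ of order $\beta$ together with the smoothness of $\psi^\epsilon$ (wavelets are $C^1$ with compact support) to produce a Taylor remainder that yields decay $2^{-j(n+\beta+1)}$; the condition $\beta\geq\lfloor np\rfloor-n$ is exactly what is needed to sum these annular contributions against the $A^p$ weight. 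Combining the two estimates yields the desired uniform bound, completing the proof.
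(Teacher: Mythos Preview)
Your overall strategy---adapt the proof of Theorem \ref{t: Hardytatom} with the continuous Calder\'on formula replaced by the wavelet expansion---is exactly what the paper does; its ``proof'' is literally the sentence ``Following similar techniques in the proof above (see also the result in \cite{GM} for dimension 1)''.

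There is, however, a genuine slip in the forward direction. You write ``$m\geq\lfloor np\rfloor-n\geq\beta$'', but the hypothesis is the \emph{opposite} inequality $\beta\geq\lfloor np\rfloor-n$. The wavelet system is only assumed to have vanishing moments up to order $\lfloor np\rfloor-n$, which may be strictly smaller than $\beta$; hence the functions $a_{k,\overline Q}$ you build from the wavelet expansion are $(1,p,\lfloor np\rfloor-n)$-atoms, not $(1,p,\beta)$-atoms in general, and your argument as written only yields $H^1_{w,wavelet}\subset H^{1,p,\lfloor np\rfloor-n}_w$. The paper's intended device (spelled out in the introduction, item (4) of the diagram) is to take, for each fixed $\beta$, a Daubechies basis of order $m\geq\beta$; then your argument does produce genuine $(1,p,\beta)$-atoms, and Theorem \ref{t: H1waveletcharacterization} (independence of $H^1_{w,wavelet}$ from the choice of basis) shows all these atomic spaces agree. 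A related minor point on the converse: the annular decay $2^{-j(n+\beta+1)}$ you invoke needs $\psi^\epsilon\in C^{\beta+1}$ for the Taylor-remainder step, whereas property (a) in Section 4.2 only asserts $C^1$; this too is remedied by choosing $m$ (and with it the smoothness of the Daubechies system) large enough.
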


As a direct consequence of Proposition \ref{prop: CM}, and Theorems \ref{t: Hardytatom} and \ref{t: Hardytatom wavelet} above, we obtain
the following duality argument.
\begin{theorem}\label{t:dual BMO cla}
Suppose $1<p\leq2$ and $w\in A^p(\mathbb R^n)$.

{\rm(1)} for any $\beta \geq \lfloor np\rfloor-n$, $\big(H^{1,\,p,\,\beta}_{w}(\mathbb{R}^n)\big)^*= {\rm BMO}_w(\R^n)$,

{\rm(2)} for any $\beta \geq \lfloor np\rfloor-n$, $\big(H^{1}_{w,\varphi,\beta}(\mathbb{R}^n)\big)^*= {\rm BMO}_w(\R^n)$.

\end{theorem}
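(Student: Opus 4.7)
The plan is to prove both parts by assembling the four equivalences developed earlier in this section, which together realize the diagram displayed in the introduction. No new computation is required: everything has been set up so that the theorem reduces to a short chain of identifications.

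For part (1), fix a wavelet system $\{\psi^\epsilon\}$ whose elements have cancellation of order at least $\lfloor np\rfloor - n$. By Theorem \ref{t: Hardytatom wavelet} (step (4) of the diagram), $H^{1,p,\beta}_w(\R^n) = H^1_{w,wavelet}(\R^n)$ with equivalent norms. By Theorem \ref{t: Wu} (step (3)), $\bigl(H^1_{w,wavelet}(\R^n)\bigr)^* = CM_w$. Proposition \ref{prop: CM} (step (2)) identifies $CM_w$ with ${\rm BMO}_{w,2}(\R^n)$; this is the first place the hypothesis $p \leq 2$ is used, via the inclusion $A^p \subset A^2$. Finally, Theorem \ref{T:MW} applied with $r=2$ (step (1)) identifies ${\rm BMO}_{w,2}(\R^n)$ with ${\rm BMO}_w(\R^n)$, which requires $r=2$ to lie in the admissible range $[1,p']$ and hence uses $p \leq 2$ once more. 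Composing these four identifications yields $\bigl(H^{1,p,\beta}_w(\R^n)\bigr)^* = {\rm BMO}_w(\R^n)$ with equivalent norms.

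Part (2) follows immediately from part (1): Theorem \ref{t: Hardytatom} gives $H^1_{w,\varphi,\beta}(\R^n) = H^{1,p,\beta}_w(\R^n)$ with equivalent norms, so the dual identification transfers. The only points that demand attention are to verify that the constraint $1 < p \leq 2$ is respected at each of the two places where it enters the chain (Proposition \ref{prop: CM} and Theorem \ref{T:MW}), and that the constraint $\beta \geq \lfloor np\rfloor - n$ is compatible with the hypotheses of Theorems \ref{t: Hardytatom} and \ref{t: Hardytatom wavelet}; both are immediate from the hypotheses of the current theorem. Since the wavelet system is chosen once with cancellation order as large as desired, the same chain of identifications yields the conclusion for every admissible $\beta$. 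There is no genuine obstacle here — the substantive work has already been carried out in the earlier subsections.
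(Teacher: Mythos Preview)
Your proof is correct and follows essentially the same approach as the paper, which states the theorem as a direct consequence of Proposition \ref{prop: CM} and Theorems \ref{t: Hardytatom} and \ref{t: Hardytatom wavelet}. You have simply unpacked the chain a bit more explicitly, separating the duality step (Theorem \ref{t: Wu}) from the identification $CM_w = {\rm BMO}_{w,2}(\R^n)$ and making explicit the John--Nirenberg step (Theorem \ref{T:MW}) needed to pass from ${\rm BMO}_{w,2}(\R^n)$ to ${\rm BMO}_w(\R^n)$, together with a clear account of where the restriction $p\leq 2$ enters.
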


\begin{remark}\label{r:coinc Hardy spc}
We denote $H^{1}_{w,\varphi,\beta}(\mathbb{R}^n)$ simply by $H^1_w(\mathbb R^n)$.
Then Theorem \ref{t:dual BMO cla} implies that for $w\in A^p(\mathbb R^n)$, $1<p\leq2$,  the weighted Hardy space $H^1_w(\mathbb R^n)$ is independent of the choice of the function $\varphi$ and of the order of cancellation $\beta \geq \lfloor np\rfloor-n$.
\end{remark}

\section{\texorpdfstring{Characterizations of $H^1_{w}(\mathbb{R}^n)$ and ${\rm BMO}_{w}(\mathbb{R}^n)$ via semigroups generated by the Laplacian}{Characterizations of the weighted Hardy space, weighted BMO via semigroups generated by the Laplacian.}}
\setcounter{equation}{0}
\label{s:BloomBMOHardyNuemann}

In this section,  we prove that for the weight $w\in A^p(\mathbb R^n)$ with $1<p\leq2$,
the classical  Hardy space $H^1_{w}(\mathbb{R}^n)$
and weighted BMO space ${\rm BMO}_{w}(\mathbb{R}^n)$ are equivalent to the new Hardy space
$H^1_{\Delta,w}(\mathbb{R}^n)$ and  weighted BMO space ${\rm BMO}_{\Delta, w}(\mathbb{R}^n)$,
respectively. To begin with,
we first consider the weighted Hardy space $H^1_{\Delta, w}(\R^n)$ defined via the  Littlewood--Paley area function associated with $\Delta$ as follows.
\begin{align}\label{SfDelta}
S_\Delta(f)(x) =\bigg( \int_0^\infty \int_{y:\, |x-y|<t}  \Big| t^2\Delta e^{-t^2\Delta}f(y) \Big|^2 {dydt\over t^{n+1}}\bigg)^{1\over2}.
\end{align}
\begin{definition}\label{def: Laplace Hardy}
Suppose $1<p<\infty$ and $w\in A^p(\mathbb R^n)$.
We define the weighted Hardy space associated with the heat semigroup generated by the Laplacian
as
$ H^1_{\Delta, w}(\R^n):=\{ f\in L^1_w(\mathbb R^n):\ S_\Delta(f)\in L^1_w(\R^n)\} $
with the norm given by $\|f\|_{H^1_{\Delta,w}(\R^n)}:=\|S_{\Delta}(f)\|_{L^1_w(\R^n)}$.
\end{definition}

\begin{theorem}\label{t-coin Hardy space cla and Lapl}
Suppose $1<p\leq2$ and $w\in A^p(\mathbb R^n)$.
Then we have $$ H^1_{w}(\mathbb{R}^n)=H^1_{\Delta, w}(\mathbb{R}^n) $$ with equivalent norms.
\end{theorem}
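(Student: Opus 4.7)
The plan is to establish both inclusions of the claimed equivalence by routing through the atomic characterization $H^1_w(\mathbb{R}^n) = H^{1,p,\beta}_w(\mathbb{R}^n)$ with $\beta := \lfloor np\rfloor - n$ from Theorem \ref{t: Hardytatom}. The key observation is the identity
$$
t^2 \Delta e^{-t^2\Delta} f = \psi_t * f, \qquad \psi(x) := \Delta\phi(x), \qquad \phi(x) := (4\pi)^{-n/2} e^{-|x|^2/4},
$$
so $\psi$ is a Schwartz function with $\hat\psi(\eta) = -|\eta|^2 e^{-|\eta|^2}$, mean zero, and smoothness of every order. Thus $S_\Delta(f)$ is an area function of the type $S_{\varphi,\beta}(f)$ with $\varphi = \psi$, but with cancellation of only order $1$.

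For $H^1_w(\mathbb{R}^n) \subset H^1_{\Delta,w}(\mathbb{R}^n)$, it suffices to bound $\|S_\Delta(a)\|_{L^1_w(\mathbb{R}^n)}$ uniformly over all $(1,p,\beta)$-atoms $a$. For an atom supported on a cube $Q$, split the integral over $2Q$ and $(2Q)^c$. On $2Q$, H\"older's inequality (with pair $(p, p')$ for the weight $w$), the $L^p_w$-boundedness of $S_\Delta$ (standard weighted Littlewood--Paley theory for the heat semigroup when $w \in A^p$), the atom size $\|a\|_{L^p_w} \leq w(Q)^{1/p-1}$, and $A^p$-doubling give a uniform bound. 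On $(2Q)^c$, exploit the cancellation of $a$ up to order $\beta$ by Taylor-expanding $\psi_t(y - \cdot)$ around the center $z_Q$ to order $\beta$, yielding
$$
|\psi_t * a(y)| \lesssim \|a\|_{L^1} \, \frac{\ell(Q)^{\beta+1}}{(t + |y - z_Q|)^{n+\beta+1}}.
$$
Integration of this estimate against the cone $\Gamma(x)$, followed by summation over dyadic annuli around $Q$, is summable precisely because $\beta \geq \lfloor np \rfloor - n$.

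For $H^1_{\Delta,w}(\mathbb{R}^n) \subset H^1_w(\mathbb{R}^n)$, the plan is to run a Chang--Fefferman stopping-time decomposition via an \emph{asymmetric} Calder\'on reproducing formula. Choose an auxiliary Schwartz function $\Phi$ with
$\hat\Phi(\xi) = c_N |\xi|^{2N} e^{-|\xi|^2}$ for $N$ large enough that $2N - 1 \geq \beta$. Then $\Phi$ is a constant multiple of $\Delta^N \phi$, so $\int \Phi(x)\, x^\alpha\, dx = 0$ for all $|\alpha| < 2N$; in particular $\Phi \in \mathcal{R}^\beta$. The substitution $u = t|\xi|$ gives
$$
\int_0^\infty \hat\Phi(t\xi)\,\hat\psi(t\xi)\, \frac{dt}{t} = -c_N \int_0^\infty u^{2N+1} e^{-2u^2}\, du,
$$
a nonzero constant in $\xi \neq 0$, so normalizing $c_N$ yields $f = \int_0^\infty \Phi_t * (\psi_t * f)\, \frac{dt}{t}$. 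Now apply the stopping-time argument from Theorem \ref{t: Hardytatom} verbatim: set $\Omega_k := \{S_\Delta(f) > 2^k\}$, define $\widetilde\Omega_k$ and $B_k$ as in \eqref{Omegakt}--\eqref{Bk}, and write $f = \sum_k \sum_{\overline Q \in B_k,\,\mathrm{maximal}} \lambda_{k,\overline Q}\, a_{k,\overline Q}$ with $\lambda_{k,\overline Q} = 2^k w(\overline Q)$ and
$$
a_{k,\overline Q}(x) = \frac{1}{\lambda_{k,\overline Q}} \sum_{\substack{Q \in B_k \\ Q \subset \overline Q}} \iint_{\widehat Q} \Phi_t(x-y)\, (\psi_t * f)(y)\, \frac{dy\, dt}{t}.
$$
Each $a_{k,\overline Q}$ inherits its cancellation (up to order $\beta$) and support from $\Phi$, and the $L^p_w$-size estimate $\|a_{k,\overline Q}\|_{L^p_w} \lesssim w(\overline Q)^{1/p-1}$ is obtained by $L^{p'}_{w'}$-duality and the boundedness of the $\mathcal{G}^*_{\Phi,\beta}$-function, exactly as in \eqref{atom norm}. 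The normalization $\sum |\lambda_{k,\overline Q}| \lesssim \|S_\Delta(f)\|_{L^1_w}$ then gives $f \in H^{1,p,\beta}_w = H^1_w$.

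The main obstacle is that $\psi = \Delta\phi$ only has cancellation of order $1$, while the atoms in the decomposition of $H^1_w$ require cancellation up to $\beta = \lfloor np\rfloor - n$ (as large as $n$ when $p = 2$). The asymmetric reproducing formula resolves this: one factor ($\psi$) carries the $S_\Delta$-size information while the other ($\Phi = c\, \Delta^N\phi$) supplies the cancellation for the output atoms. Verifying that one can arrange \emph{both} arbitrarily high cancellation for $\Phi$ \emph{and} a finite nonzero Calder\'on constant is the decisive technical point, achieved by the explicit Gaussian choice above. Apart from this, the argument is a weighted Chang--Fefferman decomposition parallel to the proof of Theorem \ref{t: Hardytatom}.
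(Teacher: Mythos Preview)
Your strategy for both inclusions matches the paper's. The inclusion $H^1_w \subset H^1_{\Delta,w}$ via uniform bounds on $\|S_\Delta(a)\|_{L^1_w}$ for atoms is exactly what the paper sketches, and for the reverse inclusion you both run an asymmetric Calder\'on reproducing formula followed by a Chang--Fefferman stopping-time argument on the level sets of $S_\Delta(f)$.

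There is, however, a genuine gap in your execution of the second inclusion. You take $\Phi = c_N \Delta^N \phi$ with $\phi$ the Gaussian and then assert that ``each $a_{k,\overline Q}$ inherits its \dots\ support from $\Phi$.'' But your $\Phi$ is \emph{not} compactly supported: it is a polynomial times a Gaussian. Hence for $(y,t)\in\widehat Q$ the function $x\mapsto\Phi_t(x-y)$ lives on all of $\mathbb{R}^n$, and your $a_{k,\overline Q}$ are not supported in any fixed dilate of $\overline Q$. They therefore fail condition (1) of Definition~\ref{def: weighted atom} and are not $(1,p,\beta)$-atoms, so the conclusion $f\in H^{1,p,\beta}_w$ does not follow as written. (With Gaussian decay your $a_{k,\overline Q}$ are molecules, and a molecular characterization of $H^{1,p,\beta}_w$ would close the gap, but that is additional machinery you have not supplied; alternatively one must replace $\Phi$ by a compactly supported function with the required moment conditions and redo the reproducing-formula verification.)

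The paper sidesteps this by a different choice of auxiliary operator: it uses $\psi(t\sqrt{\Delta})$ with $\psi(s)=s^{-1}(2\sin(s/2)-\sin s)$, whose kernel is supported in $\{|x-y|\le t\}$ by finite speed of propagation (Proposition~\ref{p-kernel cancel-compact supp}). This yields genuinely compactly supported building blocks, at the price of only order-$0$ cancellation --- so the paper lands in $H^{1,p,0}_w$ rather than $H^{1,p,\beta}_w$. That suffices, since Section~\ref{s:BloomBMOHardy} already gives $H^{1,p,0}_w=H^1_w$ for $w\in A^p$ with $1<p\le 2$. In other words, your effort to secure high-order cancellation for $\Phi$ was unnecessary, and the explicit Gaussian choice you made to achieve it is precisely what breaks the compact-support requirement that the atomic decomposition needs.
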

\begin{proof}
We first show that
$H^{1,\,p,\,\beta}_{w}(\mathbb{R}^n) \subset H^1_{\Delta,w}(\R^n)$ for $\beta \geq \lfloor np\rfloor-n.$ To see this,
it suffices to prove that there exists a constant $C$ such that for every $(1,p,\beta)$-atom $a(x)$, we have
$$\|S_{\Delta}(a)\|_{L^1_w(\R^n)} \leq C.$$
This again follows from the standard technique of decomposing the whole space $\R^n$ into annuli according to the size of the ball $B$  which is the support of the atom $a(x)$, and then using the cancellation condition of $a(x)$ and the smoothness condition of $\varphi$ to get a suitable decay, which guarantees the summability over these annuli. This requires the condition that $\beta\geq \lfloor np\rfloor-n$.   Again, we omit the straight forward details.

Next we prove that $H^1_{\Delta,w}(\R^n)\subset H^{1,\,p,\,0}_{w}(\mathbb{R}^n)$.  To see this, we recall the following construction of $\psi$ in \cite{DY}.  Let $\varphi:=-\pi i\unit_{\frac12<|x|<1}$ and
$\psi$ the Fourier transform of $\varphi$. That is,
$$\psi(s):=s^{-1}(2\sin (s/2)-\sin s).$$
Consider the operator
\begin{align}\label{eee psi}
\psi\big(t\sqrt{\Delta}\big):=
\big(t\sqrt{\Delta}\big)^{-1}\big[2\sin\big(t\sqrt{\Delta}/2\big)-\sin\big(t\sqrt{\Delta}\big)\big].
\end{align}
And we recall the following properties for $\psi\big(t\sqrt{\Delta}\big)$.
\begin{prop}[\cite{DY}]\label{p-kernel cancel-compact supp}
For all $t\in(0, \infty)$, $\psi\big(t\sqrt{\Delta}\big)(1)=0$
and the kernel $K_{\psi(t\sqrt{\Delta})}$ of $\psi(t\sqrt{\Delta})$ has support contained in
$\{(x, y)\in\mathbb R_+\times \mathbb R_+:\, |x-y|\le t\}.$
\end{prop}
Now for any  $f\in H^1_{\Delta,w}(\R^n)$, we define
$\Omega_k:=\left\{ x\in \R^n: S_\Delta(f)(x)>2^k\right\}$, and then we define $\widetilde{\Omega}_k$ and $B_k$ according to $\Omega_k$, using the same way as in \eqref{Omegakt} and \eqref{Bk}, respectively.

Then we consider the following reproducing formula
\begin{eqnarray*}
f(x)=C_\psi\dint_0^\infty\psi(t\sqrt{\Delta})
 t^2\Delta e^{-t^2\Delta}  f(x)\,\frac{dt}{t}
\end{eqnarray*}
in the sense of $L^2(\R^n)$, where $\psi(t\sqrt{\Delta})$ is  defined as in \eqref{eee psi} and $C_\psi$ is a constant depending on $\psi$.
We now follow the same way of decomposition as in the proof of Theorem \ref{t: Hardytatom} to further have
\begin{align*}
f(x)&=C_\psi\dint_0^\infty\int_{\R^n}K_{\psi(t\sqrt{\Delta})}(x,y)
 t^2\Delta e^{-t^2\Delta}  f(y)\,\frac{dydt}{t}\\
 &= C_\psi\sum_k \sum_{\overline Q\in B_k,\ {\rm maximal}}  \lambda_{k,\overline Q} a_{k,\overline Q},
\end{align*}
where $ \lambda_{k,\overline Q}:=2^k w(\overline Q)$ and
$$a_{k,\overline Q} := {C_\psi\over \lambda_{k,\overline Q}}  \sum_{\substack{Q\in B_k\\ Q\subset \overline Q }}   \iint_{\widehat Q} K_{\psi(t\sqrt{\Delta})}(x,y)
 t^2\Delta e^{-t^2\Delta}  f(y) {dydt\over t}.$$

Then it is direct to see that
$$\sum_k \sum_{\overline Q\in B_k,\ {\rm maximal}}  \lambda_{k,\overline Q} \ls \|S_{\Delta}(f)\|_{L^1_w(\R^n)}.$$
Moreover, from the property of $\psi(t\sqrt{\Delta})$ in Proposition \ref{p-kernel cancel-compact supp}, we see that
each $a_{k,\overline Q} $ is supported in $3\overline Q$ and
$ \int_{\R^n} a_{k,\overline Q} (x)dx=0. $
Thus, it suffices to verify the $L^p_w(\R^n)$ norm of $a_{k,\overline Q}$.
Following the same approach and estimates as in the proof of \eqref{atom norm}, we obtain that
\begin{align*}
\|a_{k,\overline Q}\|_{L^p_{w}(\mathbb R^n)} 
&\leq \sup_{h: \|h\|_{L^{p'}_{w'}(\mathbb R^n)}\leq1} {1\over \lambda_{k,\overline Q}}\int_{\overline Q\cap \Omega_{k+1}^c} \bigg( \sum_{\substack{ Q\in B_k\\ Q\subset \overline Q }}   \iint_{\widehat Q \cap \{|x-y|<2t\}} |\psi(t\sqrt{\Delta})(h)(y)|^2 {dydt\over t^{n+1}}\bigg)^{1\over2}\\
&\quad\hskip4cm\times \bigg( \sum_{\substack{ Q\in B_k\\ Q\subset \overline Q }}   \iint_{\widehat Q \cap \{|x-y|<2t\}}|t^2\Delta e^{-t^2\Delta}  f(y) |^2{dydt\over t^{n+1}}\bigg)^{1\over2}dx \\
&\ls \sup_{h: \|h\|_{L^{p'}_{w'}(\mathbb R^n)}\leq1} {1\over \lambda_{k,\overline Q}}\int_{\overline Q\cap \Omega_{k+1}^c} \mathcal G_{\Delta}^*(h)(x) S_{\Delta}(f)(x) dx,
\end{align*}
where
$$ \mathcal G_{\Delta}^*(h)(x) := \bigg( \int_0^\infty\int_{\R^n} \Big( {t\over t+|x-y|}\big)^{4n}|\psi(t\sqrt{\Delta})(h)(y)|^2{dydt\over t^{n+1}}  \bigg)^{1\over2}.$$
As proved in \cite{GY}*{Lemma 5.1}, there exists a positive constant $C$ such that for all $v \in A^q(\mathbb R^n)$ with $1 < q <\infty$, the following estimate holds:
$$  \|\mathcal G_{\Delta}^*(h)\|_{L^q_v(\R^n)}\leq C\|h\|_{L^q_v(\R^n)}.  $$

As a consequence, from H\"older's inequality, we have
\begin{align*}
&\|a_{k,\overline Q}\|_{L^p_{w}(\overline{ Q})} \\
&\quad\ls \sup_{h: \|h\|_{L^{p'}_{w'}(\mathbb R^n)}\leq1} {1\over \lambda_{k,\overline Q}} \bigg(\int_{\overline Q\cap \Omega_{k+1}^c} \mathcal G_{\Delta}^*(h)(x)^{p'} w'(x)dx\bigg)^{1\over p'}   \bigg(\int_{\overline Q\cap \Omega_{k+1}^c} S_{\Delta}(f)(x)^p w(x) dx\bigg)^{1\over p}\\
&\quad\ls \left[w(\overline Q)\right]^{{1\over p}-1}.
\end{align*}
This finishes the proof of Theorem \ref{t-coin Hardy space cla and Lapl}.
\end{proof}

 Now we introduce the weighted BMO space associated with the  heat semigroup generated by the Laplacian on $\R^n$ as follows.

\begin{definition}\label{def: BMOwLapalce}
Suppose $1<p<\infty$ and $w\in A^p(\mathbb R^n)$.
$${\rm BMO}_{\Delta, w}(\mathbb{R}^n):=\{f\in L^1_{\rm loc}(\mathbb R^n): \|f\|_{{\rm BMO}_{\Delta, w}(\mathbb{R}^n)}<\infty\},$$
where
$$\|f\|_{{\rm BMO}_{\Delta, w}(\mathbb{R}^n)}:=\sup_P \bigg({1\over w(P)} \sum_{\substack{Q\ {\rm dyadic} \\ Q\subset P }} \iint_{ \widehat{Q}}\Big| t^2\Delta e^{-t^2\Delta} f(y)\Big|^2 {t^n\over w(Q)} {dydt\over t}\bigg)^{1\over2}.$$
\end{definition}

%

%

Then we have our main result in this section.
\begin{theorem}\label{t: coinc clas BMO and Lapl}
Suppose $1<p\leq2$ and $w\in A^p(\mathbb R^n)$.
Then we have $ {\rm BMO}_{w}(\mathbb{R}^n)$  and ${\rm BMO}_{\Delta, w}(\mathbb{R}^n) $
coincide and they have equivalent norms.
\end{theorem}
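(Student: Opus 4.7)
The strategy is to establish the two inclusions separately, with one direction going directly and the other via duality with the Hardy space.

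\medskip

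\noindent\textbf{Step 1: The easy direction $\mathrm{BMO}_{w}(\mathbb R^n) \hookrightarrow \mathrm{BMO}_{\Delta,w}(\mathbb R^n)$.} Fix $f \in \mathrm{BMO}_w(\mathbb R^n)$ and a dyadic cube $P$. Since $t^2\Delta e^{-t^2\Delta}$ annihilates constants, we may replace $f$ by $f_P:=f-\langle f\rangle_{2P}$ in the integrand. Split $f_P = f_P\unit_{2P} + f_P\unit_{(2P)^c}$. For the local piece, one uses the standard quadratic estimate (the unweighted $L^2$ square function estimate for the heat semigroup on $\mathbb R^n$) to get
\[
\sum_{Q\subset P}\iint_{\widehat Q}|t^2\Delta e^{-t^2\Delta}(f_P\unit_{2P})(y)|^2\frac{dy\,dt}{t^{n+1}}\cdot t^n \lesssim \|f_P\unit_{2P}\|_{L^2(w^{-1})}^2\sup_{Q\subset P}\frac{\langle w^{-1}\rangle_Q^{-1}}{\cdots},
\]
and then invokes the weighted John--Nirenberg estimate (Theorem \ref{T:MW}) in the form $\|f_P\unit_{2P}\|_{L^2(w^{-1})}\lesssim \|f\|_{\mathrm{BMO}_w}w(P)^{1/2}$, combined with $\langle w\rangle_Q\langle w^{-1}\rangle_Q\lesssim [w]_{A^p}$. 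For the tail piece, one uses Gaussian decay of the kernel of $t^2\Delta e^{-t^2\Delta}$ together with $|\langle f\rangle_{2^{k+1}P}-\langle f\rangle_{2P}|\lesssim k\,\langle w\rangle_{2^{k+1}P}\|f\|_{\mathrm{BMO}_w}$, producing a convergent geometric sum. Summing over $Q\subset P$ and dividing by $w(P)$ yields $\|f\|_{\mathrm{BMO}_{\Delta,w}}\lesssim \|f\|_{\mathrm{BMO}_w}$.

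\medskip

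\noindent\textbf{Step 2: The reverse direction via duality.} The idea is to show that every $f\in\mathrm{BMO}_{\Delta,w}(\mathbb R^n)$ induces a bounded linear functional on $H^1_{\Delta,w}(\mathbb R^n)$, with norm controlled by $\|f\|_{\mathrm{BMO}_{\Delta,w}}$. Given a suitable dense subclass of $g\in H^1_{\Delta,w}$, I will use the Calder\'on reproducing formula
\[
\langle f,g\rangle = c\int_0^\infty\!\!\int_{\mathbb R^n}(t^2\Delta e^{-t^2\Delta}f)(y)\,(t^2\Delta e^{-t^2\Delta}g)(y)\,\frac{dy\,dt}{t},
\]
and then apply the tent-space/Carleson-measure duality: writing $F(y,t):=t^2\Delta e^{-t^2\Delta}f(y)$ and $G(y,t):=t^2\Delta e^{-t^2\Delta}g(y)$, the hypothesis on $f$ says that $|F(y,t)|^2\frac{dy\,dt}{t}$ is a $w$-weighted Carleson measure with norm $\|f\|_{\mathrm{BMO}_{\Delta,w}}^2$, and the Coifman--Meyer--Stein type inequality gives
\[
\left|\iint F(y,t)G(y,t)\,\frac{dy\,dt}{t}\right| \lesssim \|f\|_{\mathrm{BMO}_{\Delta,w}}\|S_\Delta g\|_{L^1_w} = \|f\|_{\mathrm{BMO}_{\Delta,w}}\|g\|_{H^1_{\Delta,w}}.
\]
Once this pairing estimate is in place, combine it with Theorem \ref{t-coin Hardy space cla and Lapl} ($H^1_{\Delta,w} \cong H^1_w$) and Theorem \ref{t:dual BMO cla} (which identifies the dual of $H^1_w$ with $\mathrm{BMO}_w$ for $1<p\le 2$) to conclude that $f$ represents an element of $\mathrm{BMO}_w$ with $\|f\|_{\mathrm{BMO}_w}\lesssim \|f\|_{\mathrm{BMO}_{\Delta,w}}$.

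\medskip

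\noindent\textbf{Main obstacle.} The delicate point is the $w$-weighted Carleson embedding/duality in Step 2: one needs
\[
\iint |F(y,t)G(y,t)|\,\frac{dy\,dt}{t} \lesssim \Big\||F|^2\tfrac{dy\,dt}{t}\Big\|_{\mathrm{Carleson},w}^{1/2}\|S_\Delta g\|_{L^1_w},
\]
which is the $L^1_w$--$\mathrm{BMO}$ analogue of the standard unweighted tent-space pairing. The restriction $1<p\le 2$ enters precisely here, via Proposition \ref{prop: CM} (equivalence of $\mathrm{BMO}_{w,2}$ and the Carleson-measure norm needs $\langle w\rangle_Q\langle w^{-1}\rangle_Q\lesssim 1$, i.e.\ $w\in A^2$). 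The technical work is to verify the nontangential/Carleson comparison with the heat semigroup kernel $t^2\Delta e^{-t^2\Delta}$ rather than with a wavelet, but this follows from the Gaussian bounds $(\beta)$ and the smoothness of $p_{t,\Delta}$ exactly as in the analogous argument used to prove Theorem \ref{t-coin Hardy space cla and Lapl}. Passing to the dense subclass and justifying the reproducing formula in the pairing require only standard truncation arguments.
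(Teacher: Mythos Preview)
Your overall strategy is correct and matches the paper's: establish $\mathrm{BMO}_w \subset \mathrm{BMO}_{\Delta,w}$ directly, and $\mathrm{BMO}_{\Delta,w} \subset \mathrm{BMO}_w$ via duality through $H^1_{\Delta,w}$, invoking Theorems \ref{t-coin Hardy space cla and Lapl} and \ref{t:dual BMO cla} exactly as you do. In particular, your Step 2 is essentially the paper's argument; what you call the ``Coifman--Meyer--Stein type inequality'' is precisely what the paper proves by hand using the stopping-time decomposition into level sets $\Omega_k$, $\widetilde\Omega_k$, $B_k$ of $S_\Delta f$.

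The only genuine difference is in Step 1. You split $f-\langle f\rangle_{2P}$ into a local piece $f_P\unit_{2P}$ (handled by a weighted $L^2$ square-function bound plus the weighted John--Nirenberg inequality in the form $\mathrm{BMO}_w = \mathrm{BMO}_{w,2}$) and a tail piece (handled by Gaussian decay). This is workable, but your displayed inequality with the trailing ``$\cdots$'' obscures the key step: to pass from $\frac{t^n}{w(Q)}$ to a pointwise weight you need $\frac{t^n}{w(Q)}\sim \langle w^{-1}\rangle_Q\sim \frac{w^{-1}(B(y,t))}{t^n}$ for $(y,t)\in\widehat Q$ (using $w\in A^2$ and doubling), which converts the sum into the conical $L^2(w^{-1})$ square-function norm via Fubini. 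The paper instead derives two \emph{pointwise} bounds for $|t^2\Delta e^{-t^2\Delta}f(y)|$ directly from the $r=1$ definition of $\mathrm{BMO}_w$ (one involving $t^{-n}w(B(y,t))$, one involving $t^\alpha/\ell(P)^{n+\alpha}\cdot w(P)$), then multiplies them and integrates. The paper's route is slightly more elementary in that it avoids both the weighted John--Nirenberg step and the weighted $L^2$ square-function boundedness; your route is the more ``structural'' local/tail argument familiar from $T(1)$-type proofs. Both give the result.
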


\begin{proof}
We first prove ${\rm BMO}_{\Delta,w}(\mathbb{R}^n)\supset {\rm BMO}_{w}(\mathbb{R}^n).$
To see this, it suffices to show that for any $f\in {\rm BMO}_{w}(\mathbb{R}^n)$ and
cube $P\subset \mathbb R^n$,
\begin{align}\label{e:inclusion classical weighted BMO}
{\rm I}:=\bigg({1\over w(P)} \sum_{\substack{Q\ {\rm dyadic} \\ Q\subset P }} \iint_{ \widehat{Q}}\Big| t^2\Delta e^{-t^2\Delta} f(y)\Big|^2 {t^n\over w(Q)} {dydt\over t}\bigg)^{1\over2}\ls \|f\|_{{\rm BMO}_{w}(\mathbb{R}^n)}.
\end{align}
Let $q_t(x,y)=-t^2\frac{d}{ds}|_{s=t^2}p_s(x,y)$ be the kernel of the operator $-t^2\Delta e^{-t^2\Delta}$,
where $p_t$ is as in \eqref{heat ker classical}. 
Then we see that
$\int_{\mathbb R^n}q_t(x,y)dy=0$ and
$$|q_t(x, y)|\ls t^{-n}e^{-\frac{|x-y|^2}{ct^2}}$$
for some $c>0$. We see that for any cube $Q\subset P$ and $y\in Q$,
\begin{align}\label{e:pw est ker q-t}
&\Big| t^2\Delta e^{-t^2\Delta} f(y)\Big|\\
&\quad=\Big| t^2\Delta e^{-t^2\Delta} (f-\left< f\right>_{B(y,t)})(y)\Big|\nonumber\\
&\quad\ls  t^{-n}\int_{\mathbb R^n}e^{-\frac{|y-z|^2}{ct^2}}|f(z)-\left< f\right>_{B(y,t)}|dz\nonumber\\
&\quad\ls t^{-n}\left[\int_{|y-z|<t}+\sum_{k=1}^\infty\int_{2^{k-1}t\le|y-z|<2^kt}\right]
e^{-\frac{|y-z|^2}{ct^2}}|f(z)-\left< f\right>_{B(y,t)}|dz\nonumber\\
&\quad\ls  t^{-n}w(B(y, t))\|f\|_{{\rm BMO}_{w}(\mathbb{R}^n)}
+\sum_{k=1}^\infty\bigg[w(B(y, 2^{k}t))\|f\|_{{\rm BMO}_{w}(\mathbb{R}^n)}\nonumber\\
&\quad\quad
+|B(y, 2^kt)|\sum_{j=1}^k|f_{B(y, 2^{j-1}t)}-\left< f\right>_{B(y, 2^jt)}|\bigg] t^{-n}e^{-c2^{2k}}\nonumber\\
&\quad\ls t^{-n}w(B(y, t))\|f\|_{{\rm BMO}_{w}(\mathbb{R}^n)}\nonumber\\
&\quad\quad+\sum_{k=1}^\infty\left[2^{kpn}w(B(y,t))+|B(y, 2^kt)|\sum_{j=1}^k\frac{w(B(y, 2^jt))}{|B(y, 2^jt)|}\right]t^{-n}e^{-c2^{2k}}\|f\|_{{\rm BMO}_{w}(\mathbb{R}^n)}\nonumber\\
&\quad\ls t^{-n}w(B(y,t))\|f\|_{{\rm BMO}_{w}(\mathbb{R}^n)}.\nonumber
\end{align}

Similarly, from the fact that $w(P)\approx w(P(y, \ell(P)))$ for any $y\in P$, we deduce that
for $\alpha>n$,
\begin{align*}
\Big| t^2\Delta e^{-t^2\Delta} f(y)\Big|
&\ls  t^{-n}\int_{\mathbb R^n}e^{-\frac{|y-z|^2}{ct^2}}|f(z)-\left< f\right>_{P(y,\ell(P))}|dz \\
&\ls t^{-n}\left[\int_{|y-z|<\ell(P)}+\sum_{k=1}^\infty\int_{2^{k-1}\ell(P)\le|y-z|<2^k\ell(P)}\right]
e^{-\frac{|y-z|^2}{ct^2}}|f(z)-\left< f\right>_{P(y,\ell(P))}|dz \\
&\ls  t^{-n}e^{-c\frac{[\ell(P)]^2}{t^2}}w(P(y,\ell(P))\|f\|_{{\rm BMO}_{w}(\mathbb{R}^n)}
+\sum_{k=1}^\infty\bigg[w(P(y, 2^{k}\ell(P)))\|f\|_{{\rm BMO}_{w}(\mathbb{R}^n)} \\
&\quad
+|P(y, 2^k\ell(P))|\sum_{j=1}^k|\left< f\right>_{P(y, 2^{j-1}\ell(P))}-\left< f\right>_{P(y, 2^j\ell(P))}|\bigg] t^{-n}e^{-c\frac{[2^{k}\ell(P)]^2}{t^2}} \\
&\ls\frac{t^\alpha}{[\ell(P)]^{\alpha+n}}w(P)\|f\|_{{\rm BMO}_{w}(\mathbb{R}^n)}\\
&\quad+ \sum_{k=1}^\infty\left[2^{kpn}w(P)+|2^kP|\sum_{j=1}^k\frac{w(2^jP)}{|2^jP|}\right]t^{-n}
\frac{t^{\alpha+n}}{2^{k(\alpha+n)\ell(P)^{k(\alpha+n)}}}\|f\|_{{\rm BMO}_{w}(\mathbb{R}^n)} \\
&\ls\frac{t^\alpha}{[\ell(P)]^{n+\alpha}}w(P)\|f\|_{{\rm BMO}_{w}(\mathbb{R}^n)}.
\end{align*}
These two inequalities imply that
\begin{align*}
{\rm I}&\ls \bigg({1\over w(P)} \sum_{\substack{Q\ {\rm dyadic} \\ Q\subset P }}
\iint_{ \widehat{Q}}\frac{w(B(y,t))}{t^n}\frac{t^\alpha w(P)}{[\ell(P)]^{n+\alpha}}\frac{t^{n-1}}{w(Q)} {dydt}\bigg)^{1\over2}\|f\|_{{\rm BMO}_{w}(\mathbb{R}^n)}\\
&\ls \bigg(\sum_{\substack{Q\ {\rm dyadic} \\ Q\subset P }}
\int_Q\int_0^{\ell(Q)}\frac{t^{\alpha-1}}{[\ell(P)]^{n+\alpha}}dydt\bigg)^{1\over2}\|f\|_{{\rm BMO}_{w}(\mathbb{R}^n)}\\
&\ls \|f\|_{{\rm BMO}_{w}(\mathbb{R}^n)}.
\end{align*}
This implies that ${\rm BMO}_{w}(\mathbb{R}^n)\subset {\rm BMO}_{\Delta,w}(\mathbb{R}^n)$.

Next, we prove ${\rm BMO}_{\Delta,w}(\mathbb{R}^n)\subset {\rm BMO}_{w}(\mathbb{R}^n)$
and for any $b\in {\rm BMO}_{\Delta,w}(\mathbb{R}^n)$, $b\in {\rm BMO}_{w}(\mathbb{R}^n)$
and $\|b\|_{{\rm BMO}_{w}(\mathbb{R}^n)}\ls \|b\|_{{\rm BMO}_{\Delta,w}(\mathbb{R}^n)}$.
To show this, we show that
\begin{align}\label{e:inclusion BMODelta1}
{\rm BMO}_{\Delta,w}(\mathbb{R}^n)\subset \big(H^1_{\Delta,w}(\R^n)\big)^*
\end{align}

To this end, for each $b\in {\rm BMO}_{\Delta,w}(\mathbb{R}^n)$, we now define a linear functional on $H^1_{\Delta,w}(\R^n)$ as follows:
\begin{align}\label{e-line func for BMO Neum}
\ell_b(f):=\langle b, f\rangle \quad{\rm for\ all\ } f\in H^1_{\Delta,w}(\R^n).
\end{align}
Now we only need to show that $\ell_b$ is a bounded functional. To see this, for any  $f\in H^1_{\Delta,w}(\R^n)$, we define
$\Omega_k=\left\{ x\in \R^n: S_\Delta(f)(x)>2^k\right\}$, and then we define $\widetilde{\Omega}_k$ and $B_k$ according to $\Omega_k$, using the same way as in \eqref{Omegakt} and \eqref{Bk}, respectively.

Next, from  Calder\'on's reproducing formula,
we have
\begin{align*}
|\langle b, f\rangle| &= c\bigg| \int_{\R^n} b(x) \int_0^\infty\int_{\R^n} t^2\Delta e^{-t^2\Delta}(x,y) t^2\Delta e^{-t^2\Delta}(f)(y) {dydt\over t} dx\bigg|\\
&= c\bigg|   \int_0^\infty\int_{\R^n} t^2\Delta e^{-t^2\Delta}(b)(y) t^2\Delta e^{-t^2\Delta}(f)(y) {dydt\over t} \bigg|\\
&=c\bigg|   \sum_k \sum_{\substack{\overline{Q}\in B_k\\ \overline{Q} {\rm\ maximal}}}\sum_{\substack{Q\in B_k\\ Q\subset \overline{Q}}} \iint_{\widehat Q} t^2\Delta e^{-t^2\Delta}(b)(y) t^2\Delta e^{-t^2\Delta}(f)(y)  {dydt\over t} \bigg|\\
&\leq C\sum_k \sum_{\substack{\overline{Q}\in B_k\\ \overline{Q} {\rm\ maximal}}}\bigg( w(\overline Q) \sum_{\substack{Q\in B_k\\ Q\subset \overline{Q}}} \iint_{\widehat Q}  |t^2\Delta e^{-t^2\Delta}(f)(y)|^2 w(Q) {dydt\over t^{n+1}}  \bigg)^{1\over 2}\\
&\hskip2cm\times\bigg( {1\over w(\overline Q)} \sum_{\substack{Q\in B_k\\ Q\subset \overline{Q}}} \iint_{\widehat Q}  |t^2\Delta e^{-t^2\Delta}(b)(y)|^2  {t^n\over w(Q)} {dydt\over t}  \bigg)^{1\over 2}\\
&\leq C\|b\|_{ {\rm BMO}_{\Delta,w}(\mathbb{R}^n)}  \sum_k \sum_{\substack{\overline{Q}\in B_k\\ \overline{Q} {\rm\ maximal}}} w(\overline Q)^{1\over2} \bigg( \sum_{\substack{Q\in B_k\\ Q\subset \overline{Q}}} \iint_{\widehat Q}  |t^2\Delta e^{-t^2\Delta}(f)(y)|^2 w(Q) {dydt\over t^{n+1}}  \bigg)^{1\over 2}\\
&\leq C\|b\|_{ {\rm BMO}_{\Delta,w}(\mathbb{R}^n)} \sum_k\bigg(  \sum_{\substack{\overline{Q}\in B_k\\ \overline{Q} {\rm\ maximal}}} w(\overline Q)\bigg)^{1\over2} \\
&\hskip2cm\times\bigg(  \sum_{\substack{\overline{Q}\in B_k\\ \overline{Q} {\rm\ maximal}}} \sum_{\substack{Q\in B_k\\ Q\subset \overline{Q}}} \iint_{\widehat Q}  |t^2\Delta e^{-t^2\Delta}(f)(y)|^2 w(Q) {dydt\over t^{n+1}}  \bigg)^{1\over 2}.
\end{align*}
Next, noting that from the definitions of $\widetilde{\Omega}_{k}$ and $\Omega_{k}$, we have
\begin{align*}
\int_{\widetilde{\Omega}_{k}\backslash \Omega_{k+1}} S_\Delta(f)(x)^2 w(x)dx \leq 2^{2(k+1)} w(\widetilde{\Omega}_{k}\backslash \Omega_{k+1} )\leq C 2^{2k} w(\Omega_{k}).
\end{align*}
Moreover, we have
\begin{align*}
&\int_{\widetilde{\Omega}_{k}\backslash \Omega_{k+1}} S_\Delta(f)(x)^2 w(x)dx \\
&\quad=  \int_{\widetilde{\Omega}_{k}\backslash \Omega_{k+1}}  \int_0^\infty \int_{\R^n} \unit_{\{|x-y|<t\}}  \Big| t^2\Delta e^{-t^2\Delta}f(y) \Big|^2 {dydt\over t^{n+1}}w(x)dx \\
&\quad\geq  \int_{\widetilde{\Omega}_{k}\backslash \Omega_{k+1}} \sum_{\substack{\overline{Q}\in B_k\\ \overline{Q} {\rm\ maximal}}} \sum_{\substack{Q\in B_k\\ Q\subset \overline{Q}}} \iint_{\widehat Q} \unit_{\{|x-y|<t\}}  \Big| t^2\Delta e^{-t^2\Delta}f(y) \Big|^2 {dydt\over t^{n+1}}w(x)dx \\
&\quad\geq C \sum_{\substack{\overline{Q}\in B_k\\ \overline{Q} {\rm\ maximal}}}  \sum_{\substack{Q\in B_k\\ Q\subset \overline{Q}}} \iint_{\widehat Q}  \Big| t^2\Delta e^{-t^2\Delta}f(y) \Big|^2 {dydt\over t^{n+1}} w(Q).
\end{align*}
Thus, we obtain that
\begin{align*}
&\bigg( \sum_{\substack{\overline{Q}\in B_k\\ \overline{Q} {\rm\ maximal}}}  \sum_{\substack{Q\in B_k\\ Q\subset \overline{Q}}} \iint_{\widehat Q}  \Big| t^2\Delta e^{-t^2\Delta}f(y) \Big|^2 {dydt\over t^{n+1}} w(Q)\bigg)^{1\over2}
\leq C 2^k w(\Omega_{k})^{1\over 2}.
\end{align*}
Then, we obtain that
\begin{align*}
|\langle b, f\rangle|
&\leq C\|b\|_{ {\rm BMO}_{\Delta,w}(\mathbb{R}^n)}  \sum_k  w(\Omega_k)^{1\over2} 2^k w(\Omega_{k})^{1\over 2}\\
&\leq C\|b\|_{ {\rm BMO}_{\Delta,w}(\mathbb{R}^n)}  \sum_k   2^k w(\Omega_{k})\\
&\leq C\|b\|_{ {\rm BMO}_{\Delta,w}(\mathbb{R}^n)} \|f\|_{  H^1_{\Delta,w}(\R^n)}.
\end{align*}
This shows that $\ell_b\in\big(H^1_{\Delta,w}(\R^n)\big)^\ast$ and
$\|\ell_b\|\ls \|b\|_{ {\rm BMO}_{\Delta,w}(\mathbb{R}^n)}$,
which implies that
\eqref{e:inclusion BMODelta1}
holds.

From Theorem \ref{t-coin Hardy space cla and Lapl}, we deduce that
\begin{align}\label{e:inclusion BMODelta2}
\big(H^1_{\Delta,w}(\R^n)\big)^* = \big(H^{1,\,p,\,\beta}_{w}(\mathbb{R}^n)\big)^*,\quad{\rm for}\, \beta \geq \lfloor np\rfloor-n.
\end{align}
Then for $b\in  {\rm BMO}_{\Delta,w}(\mathbb{R}^n)$, we have $\ell_b$ defined in \eqref{e-line func for BMO Neum}
belongs to $\big(H^{1,\,p,\,\beta}_{w}(\mathbb{R}^n)\big)^*$.
This together with Theorem \ref{t:dual BMO cla} (1) implies that there exists $\tilde b\in {\rm BMO}_{w}(\mathbb{R}^n)$ such that for any $f\in H^{1,\,p,\,\beta}_{w}(\mathbb{R}^n)= H^1_{\Delta,w}(\R^n)$,
$$\ell_b(f)=\langle\tilde b, f\rangle\,\,\textrm{and}\,\, \|\ell_b\|\approx \|\tilde b\|_{{\rm BMO}_{w}(\mathbb{R}^n)},$$
from which and \eqref{e-line func for BMO Neum} it follows that $\langle\tilde b-\tilde b, f\rangle=0$ for all $f\in H^{1,\,p,\,\beta}_{w}(\mathbb{R}^n)$,
and hence $b-\tilde b=0$ in ${\rm BMO}_{w}(\mathbb{R}^n)$. Therefore, we conclude that $b\in {\rm BMO}_{w}(\mathbb{R}^n)$ and
$$\|b\|_{{\rm BMO}_{w}(\mathbb{R}^n)}=\|\tilde b\|_{{\rm BMO}_{w}(\mathbb{R}^n)}\approx \|\ell_b\|
\ls \|b\|_{{\rm BMO}_{\Delta,w}(\mathbb{R}^n)}.$$
This shows ${\rm BMO}_{\Delta,w}(\mathbb{R}^n)\subset {\rm BMO}_{w}(\mathbb{R}^n)$,
which completes the proof of Theorem \ref{t: coinc clas BMO and Lapl}.
\end{proof}

\section{Weighted  ${\rm BMO}_{\Delta_N,w}(\mathbb{R}^n)$,
$H^1_{\Delta_N,w}(\mathbb{R}^n)$ and duality} 
\setcounter{equation}{0}
\label{s:BloomBMOHardyLaplace}

In this section, we introduce  and study the weighted BMO space ${\rm BMO}_{\Delta_N,w}(\mathbb{R}^n)$,
$H^1_{\Delta_N,w}(\mathbb{R}^n)$ in the Neumann setting on $\mathbb R^n$. We characterize
${\rm BMO}_{\Delta_N,w}(\mathbb{R}^n)$ via the  weighted BMO with Neumann on half spaces
$\mathbb R^n_+$ and $\mathbb R^n_-$.
We also show that the dual space of $H^1_{\Delta_N,w}(\mathbb{R}^n)$ is just ${\rm BMO}_{\Delta_N,w}(\mathbb{R}^n)$.

\subsection{\texorpdfstring{Weighted BMO space ${\rm BMO}_{\Delta_N,w}(\mathbb{R}^n)$}{Bloom-type weighted BMO space associated to the Neumann Laplacian}}

To begin with, we define $$ \mathcal{M}=\left\{ f\in L^1_{loc}(\mathbb{R}^n): \ \exists \epsilon>0\ s.t.\ \int_{\mathbb{R}^n} {|f(x)|^2\over 1+|x|^{n+\epsilon}} \,dx <\infty  \right\}. $$

\begin{definition}
Suppose $1<p<\infty$ and $w\in A^p_{\Delta_N}(\mathbb R^n)$.
We say that $f\in\mathcal{M}$ is in the weighted BMO space
associated with $\Delta_N$, denoted by ${\rm BMO}_{\Delta_N,w}(\mathbb{R}^n)$, if
\begin{align}\label{wBMON normN}
 \|f\|_{{\rm BMO}_{\Delta_N, w}(\mathbb{R}^n)}:=\sup_{P\subset \mathbb R^n}
  \bigg({1\over w(P)} \sum_{\substack{Q\ {\rm dyadic} \\ Q\subset P }}
  \iint_{ \widehat{Q}}\Big| t^2\Delta_N e^{-t^2\Delta_N} f(y)\Big|^2 {t^n\over w(Q)} {dydt\over t}\bigg)^{1\over2}<\infty,
  \end{align}
where the supremum is taken over all cubes $P$ in $\mathbb{R}^n$.
\end{definition}

To understand this new BMO space associated with $\Delta_N$, we introduce the following two auxiliary BMO spaces.

\begin{definition}
Suppose $1<p<\infty$ and $w\in A^p(\mathbb R^n_+)$. We say that $f\in\mathcal{M}$ is in weighted BMO space associated
with $\Delta_{N_+}$, denoted by ${\rm BMO}_{\Delta_{N_+},w}(\mathbb{R}^n_+)$, if
\begin{align}\label{wBMON norm}
 \|f\|_{{\rm BMO}_{\Delta_{N_+}, w}(\mathbb{R}_+^n)}:=\sup_{P\subset \mathbb R^n_+}
 \bigg({1\over w(P)} \sum_{\substack{Q\ {\rm dyadic} \\ Q\subset P }}
 \iint_{ \widehat{Q}}\Big| t^2\Delta_{N_+} e^{-t^2\Delta_{N_+}}
 f(y)\Big|^2 {t^n\over w(Q)} {dydt\over t}\bigg)^{1\over2}<\infty,
  \end{align}
where the supremum is taken over all cubes $P$ in $\mathbb{R}^n_+$. Similarly, we define the space
 ${\rm BMO}_{\Delta_{N_-},w}(\mathbb{R}^n_-)$ for $w$ in $A^p(\mathbb R_-^n)$ with $1<p<\infty$.
\end{definition}

\begin{definition}\label{d-BMO even}
Suppose $1<p<\infty$ and $w\in A^p(\mathbb R^n_+)$. We introduce the space
${\rm BMO}_{e,w}(\mathbb{R}^n_+)$ as follows: a function
$f\in {\rm BMO}_{e,w_+}(\mathbb{R}^n_+)$ if $f_e$ is in ${\rm BMO}_{w_{+,e}}(\mathbb{R}^n)$, and we define
$$ \|f\|_{{\rm BMO}_{e,w}(\mathbb{R}^n_+)} = \|f_e\|_{{\rm BMO}_{\Delta,w_{+,e}}(\mathbb{R}^n)}.$$
Symmetrically,
suppose $1<p<\infty$ and $w\in A^p(\mathbb R^n_-)$. We introduce the space
${\rm BMO}_{e,w_-}(\mathbb{R}^n_-)$ as follows:
a function $g\in {\rm BMO}_{e,w}(\mathbb{R}^n_-)$ if $g_e$ is in ${\rm BMO}_{w_{-,e}}(\mathbb{R}^n)$, and we define
$$ \|g\|_{{\rm BMO}_{e,w}(\mathbb{R}^n_-)} = \|g_e\|_{{\rm BMO}_{\Delta,w_{-,e}}(\mathbb{R}^n)}.$$
\end{definition}

We have the following observation.

\begin{theorem}\label{p-coinc Dz BMO}
Suppose $p\in (1,2]$ and $w\in A^p_{\Delta_N}(\mathbb R^n)$.
The spaces ${\rm BMO}_{\Delta_{N_+},w_+}(\mathbb{R}^n_+)$
and ${\rm BMO}_{e,w_+}(\mathbb{R}^n_+)$ coincide, with equivalent norms.
Similar result holds for ${\rm BMO}_{\Delta_{N_-},w_-}(\mathbb{R}^n_-)$ and ${\rm BMO}_{e,w_-}(\mathbb{R}^n_-)$.
\end{theorem}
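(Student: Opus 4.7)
The proof is built on the semigroup identity \eqref{e:identity semigroup}, which upon differentiating in $t$ yields
$$t^2\Delta_{N_+}e^{-t^2\Delta_{N_+}}f(y) \;=\; t^2\Delta e^{-t^2\Delta}f_e(y) \qquad\text{for every } y\in\mathbb R^n_+,\; t>0,$$
together with two elementary structural facts about the standard dyadic grid on $\mathbb R^n$: no standard dyadic cube straddles the hyperplane $\{x_n=0\}$, and the reflection $y\mapsto\tilde y$ maps standard dyadic cubes to standard dyadic cubes. The inclusion ${\rm BMO}_{e,w_+}(\mathbb R^n_+)\hookrightarrow {\rm BMO}_{\Delta_{N_+},w_+}(\mathbb R^n_+)$ with constant one is then immediate: for any cube $P\subset\mathbb R^n_+$ every dyadic $Q\subset P$ lies in $\mathbb R^n_+$, the identity above turns the Carleson sum in \eqref{wBMON norm} into the corresponding sum for $f_e$, and $w_+$ agrees with $w_{+,e}$ on $\mathbb R^n_+$.

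For the reverse direction I fix a cube $P\subset\mathbb R^n$ and distinguish three cases. If $P\subset\mathbb R^n_+$, the identity above gives the bound directly. If $P\subset\mathbb R^n_-$, the evenness of $f_e$ and $w_{+,e}$, combined with the reflection invariance of the heat kernel $p_t(x,y)$, imply that every term appearing in the Carleson sum over $P$ equals the corresponding term over the reflected cube $\tilde P\subset\mathbb R^n_+$, reducing matters to the first case.

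The main obstacle is the straddling case: write $P = P'\times[a_n,b_n]$ with $a_n\leq 0\leq b_n$ and $b_n-a_n=\ell(P)$, and set $\hat P := P'\times[0,\ell(P)]\subset \mathbb R^n_+$. Then $\hat P$ is a cube of side $\ell(P)$ contained in $3P$, and both $P\cap\mathbb R^n_+$ and the reflection of $P\cap\mathbb R^n_-$ are contained in $\hat P$. Split the dyadic subcubes $Q\subset P$ into $S_+$ and $S_-$ according to which half-space they lie in. Every $Q\in S_+$ is contained in $\hat P$, every $Q\in S_-$ reflects to a dyadic cube $\tilde Q\subset\hat P$, and by the reflection symmetry of $f_e$, $w_{+,e}$ and the semigroup, the contribution of $Q\in S_-$ to the Carleson sum equals that of $\tilde Q$ computed with $\Delta_{N_+}$, $f$ and $w_+$ in place of $\Delta$, $f_e$ and $w_{+,e}$. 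Since $S_+$ and the reflected copy of $S_-$ both embed into the dyadic subcubes of $\hat P$, we obtain
\begin{align*}
&\sum_{\substack{Q\text{ dyadic}\\ Q\subset P}}\iint_{\widehat Q}\bigl|t^2\Delta e^{-t^2\Delta}f_e(y)\bigr|^2\frac{t^n}{w_{+,e}(Q)}\frac{dy\,dt}{t}\\
&\qquad\leq\; 2\sum_{\substack{R\text{ dyadic}\\ R\subset\hat P}}\iint_{\widehat R}\bigl|t^2\Delta_{N_+}e^{-t^2\Delta_{N_+}}f(y)\bigr|^2\frac{t^n}{w_+(R)}\frac{dy\,dt}{t}\\
&\qquad\leq\; 2\,w_+(\hat P)\,\|f\|_{{\rm BMO}_{\Delta_{N_+},w_+}(\mathbb R^n_+)}^2.
\end{align*}
Dividing by $w_{+,e}(P)$ and invoking the doubling property of $w_{+,e}\in A^p(\mathbb R^n)$ via $w_+(\hat P)=w_{+,e}(\hat P)\leq w_{+,e}(3P)\leq C w_{+,e}(P)$ closes the estimate. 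The symmetric statement for ${\rm BMO}_{\Delta_{N_-},w_-}(\mathbb R^n_-)$ and ${\rm BMO}_{e,w_-}(\mathbb R^n_-)$ follows by interchanging the roles of the two half-spaces.
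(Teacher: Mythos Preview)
Your proof is correct and follows essentially the same route as the paper: both arguments use the identity $t^2\Delta_{N_+}e^{-t^2\Delta_{N_+}}f = t^2\Delta e^{-t^2\Delta}f_e$ on $\mathbb R^n_+$, split cubes $P\subset\mathbb R^n$ into the three cases $P\subset\mathbb R^n_+$, $P\subset\mathbb R^n_-$, and $P$ straddling, handle the second case by reflection, and resolve the straddling case by passing to the auxiliary cube $P'\times[0,\ell(P)]\subset\mathbb R^n_+$ together with the doubling property of $w_{+,e}\in A^p(\mathbb R^n)$. Your explicit remarks that standard dyadic cubes never cross $\{x_n=0\}$ and that reflection preserves the dyadic structure make the bookkeeping in the straddling case a bit cleaner than in the paper, but the underlying mechanism is identical.
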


\begin{proof}
Assume that $f\in {\rm BMO}_{e,w_+}(\mathbb{R}^n_+)$ first. To show $f\in {\rm BMO}_{\Delta_{N_+},w_+}(\mathbb{R}^n_+)$,
it suffices to show that for any cube $P\subset\mathbb R^n_+$,
\begin{align*}
\bigg({1\over w_+(P)} \sum_{\substack{Q\ {\rm dyadic} \\ Q\subset P }} \iint_{ \widehat{Q}}\Big| t^2\Delta_{N_+} e^{-t^2\Delta_{N_+}} f(y)\Big|^2 {t^n\over w_+(Q)} {dydt\over t}\bigg)^{1\over2}\ls \|f\|_{{\rm BMO}_{e,w_+}(\mathbb{R}^n_+)}.
\end{align*}
By \eqref{e:identity semigroup}, the fact that $w_{+, e}\in A^p(\mathbb R^n)$ and Theorem \ref{t: coinc clas BMO and Lapl}, we see that
\begin{align*}
&\bigg({1\over w_+(P)} \sum_{\substack{Q\ {\rm dyadic} \\ Q\subset P }} \iint_{ \widehat{Q}}\Big| t^2\Delta_{N_+} e^{-t^2\Delta_{N_+}} f(y)\Big|^2 {t^n\over w_+(Q)} {dydt\over t}\bigg)^{1\over2}\\
&\quad=\bigg({1\over w_+(P)} \sum_{\substack{Q\ {\rm dyadic} \\ Q\subset P }} \iint_{ \widehat{Q}}\Big| t^2\Delta e^{-t^2\Delta} f_e(y)\Big|^2 {t^n\over w_+(Q)} {dydt\over t}\bigg)^{1\over2}\\
&\quad\le\|f_e\|_{{\rm  BMO}_{\Delta, w_+,e}(\mathbb R^n)}\\
&\quad\sim\|f_e\|_{{\rm  BMO}_{w_+,e}(\mathbb R^n)}\\
&\quad\sim\|f\|_{{\rm BMO}_{e,w_+}(\mathbb{R}^n_+)}.
\end{align*}

Now assume that $f\in {\rm BMO}_{\Delta_{N_+},w_+}(\mathbb{R}^n_+)$. To show
$f\in {\rm BMO}_{e,w_+}(\mathbb{R}^n_+)$, by Theorem \ref{t: coinc clas BMO and Lapl},
it suffices to prove that $f_e\in {\rm BMO}_{w_{+, e}}(\mathbb{R}^n)$, that is, for any cube $P\subset \mathbb R^n$,
\begin{align}\label{e:inclusion weighted BMO}
\bigg({1\over w_{+, e}(P)} \sum_{\substack{Q\ {\rm dyadic} \\ Q\subset P }} \iint_{ \widehat{Q}}\Big| t^2\Delta e^{-t^2\Delta} f_e(y)\Big|^2 {t^n\over w_{+, e}(Q)} {dydt\over t}\bigg)^{1\over2}\ls\|f\|_{{\rm BMO}_{\Delta_{N_+},w_+}(\mathbb{R}^n_+)}.
\end{align}
We consider the following three cases:

 Case (i) $P\subset \mathbb R_+^n$.
In this case, we
 have
\begin{align*}
&\bigg({1\over w_{+, e}(P)} \sum_{\substack{Q\ {\rm dyadic} \\ Q\subset P }} \iint_{ \widehat{Q}}\Big| t^2\Delta e^{-t^2\Delta} f_e(y)\Big|^2 {t^n\over w_{+, e}(Q)} {dydt\over t}\bigg)^{1\over2}\\
&\quad=\frac1{w_{+}(P)}\sum_{\substack{Q\ {\rm dyadic} \\ Q\subset P }} \iint_{ \widehat{Q}}\Big| t^2\Delta e^{-t^2\Delta_{N, +}} f(y)\Big|^2 {t^n\over w_{+}(Q)} {dydt\over t}\bigg)^{1\over2}\\
&\quad\le\|f\|_{{\rm BMO}_{\Delta_{N_+},w_+}(\mathbb{R}^n_+)}.
\end{align*}

Case (ii) $P\subset \mathbb R^n_-$. In this case, since for any $x\in P$ and cube $Q\subset P$,
\begin{align}\label{e:identity neumann laplacian-2}
\exp(-\ell_Q^2\Delta_{N,+})f(\tilde x)
=\exp(-\ell_Q^2\Delta)f_e(x).
\end{align}
We then see that
\begin{align*}
&\bigg({1\over w_{+, e}(P)} \sum_{\substack{Q\ {\rm dyadic} \\ Q\subset P }} \iint_{ \widehat{Q}}\Big| t^2\Delta e^{-t^2\Delta} f_e(y)\Big|^2 {t^n\over w_{+, e}(Q)} {dydt\over t}\bigg)^{1\over2}\\
&\quad=\bigg({1\over w_{+, e}(P)} \sum_{\substack{Q\ {\rm dyadic} \\ Q\subset P }} \iint_{ \widehat{Q}}\Big| t^2\Delta_{N,+} e^{-t^2\Delta_{N,+}} f(\tilde y)\Big|^2 {t^n\over w_{+, e}(Q)} {dydt\over t}\bigg)^{1\over2}\\
&\quad=\bigg(\frac1{w_{+}(\widetilde P)}\sum_{\substack{Q\ {\rm dyadic} \\ Q\subset \widetilde P }} \iint_{ \widehat{Q}}\Big| t^2\Delta_{N,+} e^{-t^2\Delta_{N,+}} f(y)\Big|^2 {t^n\over w_{+}(Q)} {dydt\over t}\bigg)^{1\over2}\\
&\quad\le\|f\|_{{\rm BMO}_{\Delta_{N_+},w_+}(\mathbb{R}^n_+)},
\end{align*}
where $\widetilde{P}=\{\tilde x\in \mathbb R^n_+:  x\in P\}$.

Case (iii) $P_+=P\cap \mathbb R^n_-\not=\emptyset$ and $P_-=P\cap \mathbb R^n_+\not=\emptyset$.
In this case, let 
\begin{align}\label{e:cube hat posi}
\widehat{P_-}=\{(x', x_n):\quad x'\in P\cap \mathbb R^{n-1},\,-\ell_P<x_n\le 0\},
\end{align}
and
\begin{align}\label{e:cube hat nega}
\widehat{P_+}=\{(x', x_n):\quad x'\in P\cap \mathbb R^{n-1},\,0<x_n\le \ell_P\}.
\end{align}
As $w_{e,+}\in A^p(\mathbb R^n)$, by \eqref{e:identity semigroup} and
\eqref{e:identity neumann laplacian-2}, we have that
\begin{align*}
&\bigg({1\over w_{+, e}(P)} \sum_{\substack{Q\ {\rm dyadic} \\ Q\subset P }} \iint_{ \widehat{Q}}\Big| t^2\Delta e^{-t^2\Delta} f_e(y)\Big|^2 {t^n\over w_{+, e}(Q)} {dydt\over t}\bigg)^{1\over2}\\
&\quad=\frac1{w_{+,e}(P)}\bigg(\bigg[\sum_{\substack{Q\ {\rm dyadic} \\ Q\subset P_+ }}+\sum_{\substack{Q\ {\rm dyadic} \\ Q\subset P_- }}\bigg] \iint_{ \widehat{Q}}\Big| t^2\Delta e^{-t^2\Delta} f_e(y)\Big|^2 {t^n\over w_{+, e}(Q)} {dydt\over t}\bigg)^{1\over2}\\
&\quad\ls\frac1{w_{+}(\widehat{P_+})}\bigg(\sum_{\substack{Q\ {\rm dyadic} \\ Q\subset \widehat{P_+} }} \iint_{ \widehat{Q}}\Big| t^2\Delta_{N,+} e^{-t^2\Delta_{N, +}} f(y)\Big|^2 {t^n\over w_{+}(Q)} {dydt\over t}\bigg)^{1\over2}\\
&\quad\quad+\frac1{w_{+}(\widehat{P_+})}\bigg(\sum_{\substack{Q\ {\rm dyadic} \\ Q\subset P_- }} \iint_{ \widehat{Q}}\Big| t^2\Delta_{N,+} e^{-t^2\Delta_{N, +}} f(\tilde y)\Big|^2 {t^n\over w_{+,e}(Q)} {dydt\over t}\bigg)^{1\over2}\\
&\quad\ls\|f\|_{{\rm BMO}_{\Delta_{N_+},w_+}(\mathbb{R}^n_+)}+
\frac1{w_{+}(\widehat{P_+})}\bigg(\sum_{\substack{Q\ {\rm dyadic} \\ Q\subset \widehat{P_+} }} \iint_{ \widehat{Q}}\Big| t^2\Delta_{N,+} e^{-t^2\Delta_{N, +}} f(y)\Big|^2 {t^n\over w_{+}(Q)} {dydt\over t}\bigg)^{1\over2}\\
&\quad\ls\|f\|_{{\rm BMO}_{\Delta_{N_+},w_+}(\mathbb{R}^n_+)}.
\end{align*}
Combining the three estimates above, we conclude that \eqref{e:inclusion weighted BMO} holds.
\end{proof}

\begin{prop} \label{p-coinc Dz-BMO-2}Suppose $p\in (1,2]$ and $w\in A^p_{\Delta_N}(\mathbb R^n)$.
Then the Neumann {\rm BMO} space ${\rm BMO}_{\Delta_N,w}(\mathbb{R}^n)$
can be described in the following way:
$${\rm BMO}_{\Delta_N,w}(\mathbb{R}^n)=
\left\{f\in\mathcal{M}: \ f_+\in {\rm BMO}_{e,w_+}(\mathbb{R}^n_+) {\rm\ and\ }
 f_-\in {\rm BMO}_{e,w_-}(\mathbb{R}^n_-)  \right\};$$
 Moreover, we have that
 $$\|f\|_{{\rm BMO}_{\Delta_N,w}(\mathbb{R}^n)}\approx \|f_+\|_{{\rm BMO}_{e,w_+}(\mathbb{R}^n_+)}
 +\|f_-\|_{{\rm BMO}_{e,w_-}(\mathbb{R}^n_-)}.$$
\end{prop}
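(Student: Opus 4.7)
The plan is to first reduce the claim to an equivalence of Neumann BMO norms on the two half-spaces. By Theorem \ref{p-coinc Dz BMO} applied to $\mathbb R^n_+$ and symmetrically to $\mathbb R^n_-$, we have $\|f_\pm\|_{{\rm BMO}_{e,w_\pm}(\mathbb R^n_\pm)}\approx\|f_\pm\|_{{\rm BMO}_{\Delta_{N_\pm},w_\pm}(\mathbb R^n_\pm)}$, so it suffices to prove
$$\|f\|_{{\rm BMO}_{\Delta_N,w}(\mathbb R^n)}\approx \|f_+\|_{{\rm BMO}_{\Delta_{N_+},w_+}(\mathbb R^n_+)}+\|f_-\|_{{\rm BMO}_{\Delta_{N_-},w_-}(\mathbb R^n_-)}.$$

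For the lower bound on the left-hand side I would restrict the supremum in \eqref{wBMON normN} to cubes $P\subset\mathbb R^n_+$. By \eqref{Delta N-exp} one has $t^2\Delta_N e^{-t^2\Delta_N}f(y)=t^2\Delta_{N_+} e^{-t^2\Delta_{N_+}}f_+(y)$ for $y\in\mathbb R^n_+$, and $w|_P=w_+|_P$; hence the local BMO quantity on $P$ coincides with the $\Delta_{N_+}$ quantity \eqref{wBMON norm} for $f_+$ on $P$. Taking the supremum yields $\|f_+\|_{{\rm BMO}_{\Delta_{N_+},w_+}}\le\|f\|_{{\rm BMO}_{\Delta_N,w}}$, and the symmetric argument handles $f_-$.

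For the reverse inequality, fix an arbitrary cube $P\subset\mathbb R^n$. Using a standard dyadic grid in which $\{x_n=0\}$ is a face of every dyadic cube that meets it, each dyadic $Q\subset P$ is contained either in $\overline{\mathbb R^n_+}$ or in $\overline{\mathbb R^n_-}$, so the sum in \eqref{wBMON normN} splits cleanly into an upper and a lower part. On the upper cubes the integrand equals $|t^2\Delta_{N_+}e^{-t^2\Delta_{N_+}}f_+|^2$ and $w(Q)=w_+(Q)$, and analogously on the lower ones for $f_-$. When $P\subset\mathbb R^n_+$ or $P\subset\mathbb R^n_-$ each sum is immediately dominated by $\|f_\pm\|^2_{{\rm BMO}_{\Delta_{N_\pm},w_\pm}}$. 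When $P$ straddles $\{x_n=0\}$, set $P_\pm:=P\cap\overline{\mathbb R^n_\pm}$; each $P_\pm$ is a rectangle with horizontal base a cube of side $\ell(P)$ and height $h_\pm\le\ell(P)$, and every upper dyadic $Q\subset P$ must have side at most $h_+$ and lie in $P_+$. I would tile $P_+$ by cubes $\{R_j^+\}\subset\mathbb R^n_+$ of side comparable to $h_+$, aligned with the dyadic grid so that each upper dyadic $Q\subset P$ sits in some $R_j^+$. Applying the definition of $\|f_+\|_{{\rm BMO}_{\Delta_{N_+},w_+}}$ tile-by-tile gives
$$\sum_{\substack{Q\subset P\\ Q\subset\overline{\mathbb R^n_+}}}\iint_{\widehat Q}\bigl|t^2\Delta_{N_+}e^{-t^2\Delta_{N_+}}f_+\bigr|^2\frac{t^n}{w_+(Q)}\frac{dy\,dt}{t}\lesssim w_+(P_+)\,\|f_+\|^2_{{\rm BMO}_{\Delta_{N_+},w_+}},$$
and since $w_+(P_+)\le w(P)$, dividing by $w(P)$ produces the bound $\|f_+\|^2$. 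The lower part is bounded analogously by $\|f_-\|^2$, and adding the two completes the argument.

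The main technical obstacle is the tiling step for straddling $P$. The tiles $R_j^+$ must lie in $\mathbb R^n_+$, must be dyadically aligned so that every upper dyadic $Q\subset P$ is captured by one of them, and their total $w_+$-mass must satisfy $\sum_j w_+(R_j^+)\lesssim w_+(P_+)$. This is exactly where the doubling of $w_+$ on $\mathbb R^n_+$ enters, which is available because $w_{+,e}\in A^p(\mathbb R^n)$ by the definition of $A^p_{\Delta_N}$. Crucially, $w$ itself need not be doubling across $\{x_n=0\}$ (cf.\ Remark \ref{r-Ap Delta nondoubl}), so one cannot circumvent this step by enlarging $P_+$ to a full cube of side $\ell(P)$ in $\mathbb R^n_+$, nor by any argument that crosses the boundary; the restriction of the tiling to a single half-space, and the doubling of $w_\pm$ there, is essential.
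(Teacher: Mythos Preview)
Your argument is correct and follows the same overall route as the paper: reduce via Theorem~\ref{p-coinc Dz BMO} to the comparison $\|f\|_{{\rm BMO}_{\Delta_N,w}}\approx\|f_+\|_{{\rm BMO}_{\Delta_{N_+},w_+}}+\|f_-\|_{{\rm BMO}_{\Delta_{N_-},w_-}}$, handle cubes contained in a single half-space directly from \eqref{Delta N-exp}, and treat straddling cubes by splitting $P$ along $\{x_n=0\}$ and tiling each piece $P_\pm$.

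The only substantive difference is in the tiling step. You propose tiles $R_j^+$ of uniform side $\sim h_+$, which obliges you to invoke the doubling of $w_+$ on $\mathbb R^n_+$ to control $\sum_j w_+(R_j^+)\lesssim w_+(P_+)$; this works, but it is exactly the ``main technical obstacle'' you flag. The paper instead lets $\{P_k\}$ be the \emph{maximal dyadic cubes} contained in $P_\pm$. These are pairwise disjoint, every dyadic $Q\subset P_\pm$ lies in one of them, and their union is $P_\pm$ up to a null set, so one gets $\sum_k w_\pm(P_k)=w_\pm(P_\pm)\le w(P)$ for free, with no appeal to doubling at all. This removes the obstacle you identified and makes the straddling case entirely elementary; your remark that ``one cannot circumvent this step'' is therefore a little overstated, since the maximal-dyadic decomposition does precisely that.
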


\begin{proof}
Firstly, let $f\in {\rm BMO}_{\Delta_N,w}(\mathbb{R}^n)$. Then by
Proposition \ref{p-coinc Dz BMO}, and the properties in \eqref{Delta N} and \eqref{Delta N-exp},
we see that
$f_+\in {\rm BMO}_{e,w_+}(\mathbb{R}^n_+)$ and $f_-\in {\rm BMO}_{e,w_-}(\mathbb{R}^n_-)$
and
$$\|f_+\|_{{\rm BMO}_{e,w_+}(\mathbb{R}^n_+)}+\|f_-\|_{{\rm BMO}_{e,w_-}(\mathbb{R}^n_-)}
\ls \|f\|_{{\rm BMO}_{\Delta_N,w}(\mathbb{R}^n)}.$$

Conversely, for $w\in A^p_{\Delta_N}(\mathbb R^n)$ and $f$ on $\mathbb R^n$
 such that $f_+\in {\rm BMO}_{e,w_+}(\mathbb{R}^n_+)$ and $f_-\in {\rm BMO}_{e,w_-}(\mathbb{R}^n_-)$.
 Another  application of Proposition \ref{p-coinc Dz BMO} shows that
 $f_+\in {\rm BMO}_{\Delta_{N_+},w_+}(\mathbb{R}^n_+)$ and $f_-\in {\rm BMO}_{\Delta_{N_-},w_-}(\mathbb{R}^n_-)$.
 To show $f\in {\rm BMO}_{\Delta_N,w}(\mathbb{R}^n)$, it suffices to show that for any
 cube $P\subset \mathbb R^n$,
\begin{align}\label{e:Ip}
{\rm I}_P&:=\bigg({1\over w(P)} \sum_{\substack{Q\ {\rm dyadic} \\ Q\subset P }}
 \iint_{ \widehat{Q}}\Big| t^2\Delta_{N} e^{-t^2\Delta_{N}}
 f(y)\Big|^2 {t^n\over w(Q)} {dydt\over t}\bigg)^{1\over2} \\
&\ls \|f_+\|_{{\rm BMO}_{\Delta_{N_+},w_+}(\mathbb{R}^n_+)}
+\|f_-\|_{{\rm BMO}_{\Delta_{N_-},w_-}(\mathbb{R}^n_-)}.\nonumber
\end{align}
If $P\subset \mathbb R^n_+$, then from \eqref{Delta N} and \eqref{Delta N-exp}, it follows that
\begin{align*}
{\rm I}_P&=\bigg({1\over w_+(P)} \sum_{\substack{Q\ {\rm dyadic} \\ Q\subset P }}
 \iint_{ \widehat{Q}}\Big| t^2\Delta_{N_+} e^{-t^2\Delta_{N_+}}
 f_+(y)\Big|^2 {t^n\over w_+(Q)} {dydt\over t}\bigg)^{1\over2} \\
&\le \|f_+\|_{{\rm BMO}_{\Delta_{N_+},w_+}(\mathbb{R}^n_+)}.
\end{align*}
Similarly, if $P\subset \mathbb R^n_-$, then
\begin{align*}
{\rm I}_P\le\|f_-\|_{{\rm BMO}_{\Delta_{N_-},w_-}(\mathbb{R}^n_-)}.
\end{align*}
Now assume that $P_+=P\cap \mathbb R_+^n\not=\emptyset$ and $P_-=P\cap \mathbb R_-^n\not=\emptyset$.
We first have
\begin{align*}
{\rm I}_P&\le \bigg({1\over w(P)} \sum_{\substack{Q\ {\rm dyadic} \\ Q\subset P_+ }}
 \iint_{ \widehat{Q}}\Big| t^2\Delta_{N_+} e^{-t^2\Delta_{N_+}}
 f_+(y)\Big|^2 {t^n\over w_+(Q)} {dydt\over t}\bigg)^{1\over2} \\
 &\quad+\bigg({1\over w(P)} \sum_{\substack{Q\ {\rm dyadic} \\ Q\subset P_- }}
 \iint_{ \widehat{Q}}\Big| t^2\Delta_{N_-} e^{-t^2\Delta_{N_-}}
 f_-(y)\Big|^2 {t^n\over w_-(Q)} {dydt\over t}\bigg)^{1\over2} .
 \end{align*}
Observe that the interior $\PP$ can be written as the
union of a sequence $\{P_k\}_k$ of maximal dyadic cubes such that
$\PP=\cup_k P_k$ and ${\mathop P \limits^{ \circ}}_k\cap {\mathop P \limits^{ \circ}}_i=\emptyset$ if $k\not=i$.
Therefore, we obtain that
\begin{align*}
&\bigg({1\over w(P)} \sum_{\substack{Q\ {\rm dyadic} \\ Q\subset P_- }}
 \iint_{ \widehat{Q}}\Big| t^2\Delta_{N_-} e^{-t^2\Delta_{N_-}}
 f_-(y)\Big|^2 {t^n\over w_-(Q)} {dydt\over t}\bigg)^{1\over2} \\
 &\quad=\bigg({1\over w(P)} \sum_k\sum_{\substack{Q\ {\rm dyadic} \\ Q\subset P_k }}
 \iint_{ \widehat{Q}}\Big| t^2\Delta_{N_-} e^{-t^2\Delta_{N_-}}
 f_-(y)\Big|^2 {t^n\over w_-(Q)} {dydt\over t}\bigg)^{1\over2} \\
  &\quad\le\bigg({1\over w(P)} \sum_k w_-(P_k)\bigg)^{1\over2} \|f_-\|_{{\rm BMO}_{\Delta_{N_-},w_-}(\mathbb{R}^n_-)}\\
  &\quad\le\|f_-\|_{{\rm BMO}_{\Delta_{N_-},w_-}(\mathbb{R}^n_-)}.
 \end{align*}
 The estimate for $f_+$ is similar and omitted. Thus we see that \eqref{e:Ip} holds.
\end{proof}

\subsection{\texorpdfstring{The Weighted Hardy Space $H^1_{\Delta_N,w}(\mathbb{R}^n)$}{The Weighted Hardy Space associated to the Neumann Laplacian}}

Consider 
the  Littlewood--Paley area function associated with $\Delta_N$ as follows.
\begin{align}\label{SfDeltaN}
S_{\Delta_N}(f)(x) =\bigg( \iint_{\Gamma_{\Delta_N}(x)} \Big| t^2\Delta_N e^{-t^2\Delta_N}f(y) \Big|^2 {dydt\over t^{n+1}}\bigg)^{1\over2},
\end{align}
where $\Gamma_{\Delta_N}(x)$ is the cone defined as
$$\Gamma_{\Delta_N}(x):= \{ (y,t)\in \R^n\times (0,\infty): |x-y|<t, H(x_ny_n)=1\},$$
where $H(t)$ is the Heaviside function defined as in \eqref{e:hvyside}.

\begin{definition}
Suppose $1<p<\infty$ and $w\in A^p_{\Delta_N}(\mathbb R^n)$.
We define the weighted Hardy space $H^1_{\Delta_N,w}(\mathbb{R}^n)$
as
$ H^1_{\Delta_N,w}(\R^n):=\{ f\in L^1_w(\R^n):\ S_{\Delta_N}(f)\in L^1_w(\R^n)\} $
with the norm  $\|f\|_{H^1_{\Delta_N,w}(\R^n)}:=\|S_{\Delta_N}(f)\|_{L^1_w(\R^n)}$.
\end{definition}

We also introduce the following auxiliary Hardy spaces on half spaces.
\begin{definition}\label{d-H1 even}
Suppose $1<p<\infty$ and $w$ is a weight on $\mathbb R^n_+$.
We say a function $f\in L^1_w(\mathbb R^n_+)$ belongs to
$H^1_{e,w}(\mathbb{R}^n_+)$ if $f_e\in H^1_{w_{+,e}}(\R^n) $
with the norm $\|f\|_{H^1_{e,w}(\mathbb{R}^n_+)}:=\|f_e\|_{H^1_{w_{+,e}}(\R^n)}$.
Symmetrically, suppose $w$ is a weight on $\mathbb R^n_-$. We say a function $g\in L^1_w(\mathbb R^n_-)$
belongs to $H^1_{e,w}(\mathbb{R}^n_-)$ if $g_e\in  H^1_{w_{-,e}}(\R^n)$ with the norm
$\|g\|_{H^1_{e,w}(\mathbb{R}^n_-)}:=\|g_e\|_{H^1_{w_{-,e}}(\R^n)}$.
\end{definition}

\begin{prop} \label{p-coinc Dz H1}
Let $p\in (1,2]$ and $w\in A^p_{\Delta_N}(\mathbb R^n)$.
Then the  space $H^1_{\Delta_N,w}(\R^n)$
can be characterised in the following way:
$$H^1_{w,S_{\Delta_N}}(\R^n)=
\left\{f\in L^1_w(\mathbb R^n): \ f_+\in H^1_{e,w_+}(\mathbb{R}^n_+) {\rm\ and\ }
 f_-\in H^1_{e,w_-}(\mathbb{R}^n_-)  \right\}.$$
\end{prop}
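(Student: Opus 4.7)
The plan is to reduce the statement, via Definition \ref{d-H1 even} and Theorem \ref{t-coin Hardy space cla and Lapl} (the latter applicable since $w_{+,e},w_{-,e}\in A^p(\R^n)$ with $p\in(1,2]$), to the norm equivalence
\begin{align*}
\|S_{\Delta_N}(f)\|_{L^1_w(\R^n)} \sim \|S_\Delta((f_+)_e)\|_{L^1_{w_{+,e}}(\R^n)}+\|S_\Delta((f_-)_e)\|_{L^1_{w_{-,e}}(\R^n)}.
\end{align*}
The unifying observation is that for $x\in\R^n_+$ and $y$ with $y_n>0$, identities \eqref{e:identity semigroup} and \eqref{Delta N-exp} yield $t^2\Delta_N e^{-t^2\Delta_N}f(y)=t^2\Delta e^{-t^2\Delta}(f_+)_e(y)$, and this quantity is even in $y_n$. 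Consequently $S_\Delta((f_+)_e)$ is even in $x_n$, and since $w_{+,e}$ is likewise even, $\|S_\Delta((f_+)_e)\|_{L^1_{w_{+,e}}}=2\int_{\R^n_+}S_\Delta((f_+)_e)(x)w_+(x)\,dx$, with the analogous identity for $(f_-)_e$.

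The easy direction is $\|S_{\Delta_N}(f)\|_{L^1_w}\lesssim \|(f_+)_e\|_{H^1_{w_{+,e}}}+\|(f_-)_e\|_{H^1_{w_{-,e}}}$: since $\Gamma_{\Delta_N}(x)\subset \Gamma(x)$, one has the pointwise bound $S_{\Delta_N}(f)(x)\leq S_\Delta((f_+)_e)(x)$ for $x\in\R^n_+$ (and the symmetric statement on $\R^n_-$ with $(f_-)_e$). Integrating against $w_+$ on $\R^n_+$ and doubling via evenness gives $\int_{\R^n_+}S_{\Delta_N}(f)w_+\,dx\leq \tfrac{1}{2}\|S_\Delta((f_+)_e)\|_{L^1_{w_{+,e}}}$, and summing with the $\R^n_-$ piece yields the claim.

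The harder direction requires controlling the ``cross'' contribution to $S_\Delta((f_+)_e)$ on $\R^n_+$. For $x\in\R^n_+$ I split $\Gamma(x)$ into its $y_n\ge 0$ and $y_n<0$ halves. The first half equals $\Gamma_{\Delta_N}(x)$ and contributes $S_{\Delta_N}(f)(x)^2$. On the second half, the change of variable $y\mapsto\tilde y$ combined with the evenness of $|t^2\Delta e^{-t^2\Delta}(f_+)_e(\cdot)|^2$ in the last coordinate converts the piece into an integral over $\{(y,t):|\tilde x-y|<t,\,y_n>0\}$ of the same integrand. The geometric crux is the elementary inequality
\begin{align*}
|\tilde x-y|^2 = |x'-y'|^2+(x_n+y_n)^2 \;\ge\; |x'-y'|^2+(x_n-y_n)^2 = |x-y|^2 \qquad \text{whenever } x_n,y_n>0,
\end{align*}
which shows the reflected region $\{(y,t):|\tilde x-y|<t,\,y_n>0\}$ is contained in $\Gamma_{\Delta_N}(x)$. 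Hence the cross piece is itself $\le S_{\Delta_N}(f)(x)^2$, so $S_\Delta((f_+)_e)(x)^2\le 2\,S_{\Delta_N}(f)(x)^2$ on $\R^n_+$; integrating against $w_+$ and doubling yields $\|(f_+)_e\|_{H^1_{w_{+,e}}}\lesssim \|S_{\Delta_N}(f)\|_{L^1_w}$, and the same argument with $(f_-)_e$ completes the proof. The main obstacle is precisely that a naive bound of the cross piece by $S_\Delta((f_+)_e)$ itself is circular; the cone-containment inequality above is what breaks the circularity and is the key step.
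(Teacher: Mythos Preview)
Your argument is correct and follows the same overall strategy as the paper: use \eqref{e:identity semigroup} and \eqref{Delta N-exp} to identify $t^2\Delta_N e^{-t^2\Delta_N}f$ on $\R^n_\pm$ with $t^2\Delta e^{-t^2\Delta}(f_\pm)_e$, exploit the evenness of the integrand and of $w_{\pm,e}$ to double, and invoke Theorem~\ref{t-coin Hardy space cla and Lapl} and Definition~\ref{d-H1 even} to reach $H^1_{e,w_\pm}(\R^n_\pm)$.

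Where you differ from the paper is in the pointwise comparison between $S_{\Delta_N}(f)(x)$ and $S_\Delta((f_+)_e)(x)$ for $x\in\R^n_+$. The paper asserts the \emph{identity} $S_{\Delta_N}(f)(x)=\tfrac{\sqrt{2}}{2}\,S_\Delta(f_{+,e})(x)$, which is not literally true: the cone $\Gamma(x)\cap\{y_n<0\}$ does not carry exactly the same mass as $\Gamma_{\Delta_N}(x)$ when $x_n>0$. Your cone-containment observation $|\tilde x-y|\ge|x-y|$ for $x_n,y_n>0$ is precisely what repairs this and yields the correct two-sided bound
\[
S_{\Delta_N}(f)(x)\le S_\Delta((f_+)_e)(x)\le \sqrt{2}\,S_{\Delta_N}(f)(x),\qquad x\in\R^n_+,
\]
which is all the norm equivalence needs. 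So your proof is essentially the paper's proof with the pointwise step stated more carefully.
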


\begin{proof}
Suppose $f\in L^1_w(\mathbb R^n)$ such that $ \ f_+\in H^1_{e,w_+}(\mathbb{R}^n_+) {\rm\ and\ }
 f_-\in H^1_{e,w_-}(\mathbb{R}^n_-)$.
Note that
\begin{eqnarray*}
&&t^2\Delta_N\exp(-t^2\Delta_N)f(x)=t^2\Delta\exp(-t^2\Delta)f_{+,e}(x)\quad {\rm for}\ x\in \mathbb{R}^n_+,\ {\rm and} \\
&&t^2\Delta_N\exp(-t^2\Delta_N)f(x)=t^2\Delta\exp(-t^2\Delta)f_{-,e}(x)\quad {\rm for}\ x\in \mathbb{R}^n_-.
\end{eqnarray*}

Moreover, by a change of variable,
\begin{eqnarray*} 
&&t^2\Delta_N\exp(-t^2\Delta_N)f(x)=t^2\Delta\exp(-t^2\Delta)f_{+,e}(\widetilde{x})\
\ {\rm for\ any\ }t>0, \ x\in \mathbb{R}^n_+;\\
&&t^2\Delta_N\exp(-t^2\Delta_N)f(x)=t^2\Delta\exp(-t^2\Delta)f_{-,e}(\widetilde{x})\
\ {\rm for\ any\ }t>0, \ x\in \mathbb{R}^n_-,\nonumber
\end{eqnarray*}
where for every $x=(x_1,\ldots,x_{n-1},x_n)$, we use $\widetilde x$ to denote the reflection of $x$, i.e.,
$\widetilde x=(x_1,\ldots,x_{n-1},-x_n)$.

Then  we have that for $x\in \R^n_+$,
\begin{align*}
S_{\Delta_N}(f)(x)^2
&=\iint_{\Gamma_{\Delta_N}(x)}  |t^2\Delta_N\exp(-t^2\Delta_N)f(y)|^2 \,\frac{dydt}{t^{n+1}}\\
&=
   \int_0^\infty \int_{ |x-y|<t, y\in\mathbb{R}_+^n }  |t^2\Delta_N\exp(-t^2\Delta_N)f(y)|^2 \,\frac{dydt}{t^{n+1}} \\
&=\int_0^\infty \int_{ |x-y|<t, y\in\mathbb{R}_+^n }  |t^2\Delta\exp(-t^2\Delta)f_{+,e}(y)|^2\, \frac{dydt}{t^{n+1}}\\
&=\frac{1}{2}\int_0^\infty \int_{ |x-y|<t }  |t^2\Delta\exp(-t^2\Delta)f_{+,e}(y)|^2\, \frac{dydt}{t^{n+1}},
\end{align*}
which implies that $ S_{\Delta_N}(f)(x)= \frac{\sqrt{2}}{2}  S_\Delta(f_{+,e})(x) $.
Symmetrically, for $x\in \R^n_-$, we have $ S_{\Delta_N}(f)(x)= \frac{\sqrt{2}}{2}  S_\Delta(f_{-,e})(x) $.

As a consequence, we have
\begin{align*}
\|f\|_{H^1_{\Delta_N,w}(\mathbb{R}^n)}&=\int_{\mathbb{R}^n}  |S_{\Delta_N}(f)(x)|\,w(x)dx\\
&=\int_{\mathbb{R}^n_+}  |S_{\Delta_N}(f)(x)|\,w_+(x)dx+\int_{\mathbb{R}^n_-}  |S_{\Delta_N}(f)(x)|\,w_-(x)dx\\
&\approx \int_{\mathbb{R}^n_+}  |S_\Delta(f_{+,e})(x)|\,w_+(x)dx+ \int_{\mathbb{R}^n_-}  |S_\Delta(f_{-,e})(x)|\,w_-(x)dx\nonumber\\
&\approx \int_{\mathbb{R}^n}  |S_\Delta(f_{+,e})(x)|\,w_{+,e}(x)dx+ \int_{\mathbb{R}^n}  |S_\Delta(f_{-,e})(x)|\,w_{-,e}(x)dx\nonumber\\
 &= \|f_{+,e}\|_{H^1_{\Delta,w_{+,e}}(\mathbb{R}^n)}+\|f_{-,e}\|_{H^1_{\Delta,w_-,e}(\mathbb{R}^n)} \nonumber\\
 &= \|f_{+}\|_{H^1_{e,w_+}(\mathbb{R}^n_+)}+\|f_{-}\|_{H^1_{e,w_-}(\mathbb{R}^n_-)}.
\end{align*}
Here we have used the facts that
$$\int_{\mathbb{R}^n_+}  |S_\Delta(f_{+,e})(x)|\,w_+(x)dx\approx \int_{\mathbb{R}^n}  |S_\Delta(f_{+,e})(x)|\,w_{+,e}(x)dx$$
and that
$$\int_{\mathbb{R}^n_-}  |S_\Delta(f_{-,e})(x)|\,w_-(x)dx\approx \int_{\mathbb{R}^n}  |S_\Delta(f_{-,e})(x)|\,w_{-,e}(x)dx,$$
both of which follows from changing of variables and the fact that both $w_{+,e}$ ($w_{-,e}$ resp.) and
$f_{+,e}$ ($f_{-,e}$ resp.) are even functions with respect to the $n$th component.

Conversely, suppose $f\in H^1_{\Delta_N,w}(\mathbb{R}^n)$.
Actually, we just need to reverse the calculations above, and then we get
$\|f_{+}\|_{H^1_{e,w_+}(\mathbb{R}^n_+)}+\|f_{-}\|_{H^1_{e,w_-}(\mathbb{R}^n_+)}
\lesssim \|f\|_{H^1_{\Delta_N,w}(\mathbb{R}^n)}.
$
\end{proof}


\begin{theorem}\label{t-dual}
 Let $p\in (1,2]$ and $w\in A^p_{\Delta_N}(\mathbb R^n)$.
Then $[H^1_{\Delta_N,w}(\mathbb{R}^n)]'={\rm BMO}_{\Delta_N,w}(\mathbb{R}^n)$
with equivalent norms.
\end{theorem}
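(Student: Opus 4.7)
The plan is to reduce this duality on $\mathbb{R}^n$ to the classical Muckenhoupt--Wheeden duality on $\mathbb{R}^n$ via the half-space splittings already established. Propositions \ref{p-coinc Dz H1} and \ref{p-coinc Dz-BMO-2} supply the (norm-equivalent) identifications
\[
H^1_{\Delta_N,w}(\mathbb{R}^n) \cong H^1_{e,w_+}(\mathbb{R}^n_+)\oplus H^1_{e,w_-}(\mathbb{R}^n_-),
\]
\[
{\rm BMO}_{\Delta_N,w}(\mathbb{R}^n) \cong {\rm BMO}_{e,w_+}(\mathbb{R}^n_+)\oplus {\rm BMO}_{e,w_-}(\mathbb{R}^n_-),
\]
where by Definitions \ref{d-H1 even} and \ref{d-BMO even} the $\pm$ pieces are isometrically realised as the even-in-$x_n$ subspaces of the classical spaces $H^1_{w_{\pm,e}}(\mathbb{R}^n)$ and ${\rm BMO}_{w_{\pm,e}}(\mathbb{R}^n)$. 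Since $w\in A^p_{\Delta_N}(\mathbb{R}^n)$ with $p\in(1,2]$ forces $w_{\pm,e}\in A^p(\mathbb{R}^n)$, Theorem \ref{t:dual BMO cla} furnishes the classical duality $[H^1_{w_{\pm,e}}(\mathbb{R}^n)]'={\rm BMO}_{w_{\pm,e}}(\mathbb{R}^n)$, which is what we transfer back.

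For ${\rm BMO}_{\Delta_N,w}(\mathbb{R}^n)\hookrightarrow [H^1_{\Delta_N,w}(\mathbb{R}^n)]'$: take $b\in{\rm BMO}_{\Delta_N,w}$ and $f\in H^1_{\Delta_N,w}$, split the pairing according to the sign of $x_n$, and symmetrise via even reflection (noting that $b_{\pm,e}f_{\pm,e}$ is invariant under $x\mapsto \widetilde x$):
\[
\langle b,f\rangle=\int_{\mathbb{R}^n_+}b_+f_+\,dx+\int_{\mathbb{R}^n_-}b_-f_-\,dx=\tfrac12\langle b_{+,e},f_{+,e}\rangle_{\mathbb{R}^n}+\tfrac12\langle b_{-,e},f_{-,e}\rangle_{\mathbb{R}^n}.
\]
Applying Theorem \ref{t:dual BMO cla} to each piece and then invoking the norm equivalences of Propositions \ref{p-coinc Dz H1} and \ref{p-coinc Dz-BMO-2} yields $|\langle b,f\rangle|\ls \|b\|_{{\rm BMO}_{\Delta_N,w}}\|f\|_{H^1_{\Delta_N,w}}$.

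For the reverse inclusion, given $\ell\in [H^1_{\Delta_N,w}(\mathbb{R}^n)]'$, restrict it to functions supported on each half-space to obtain bounded functionals $\ell_\pm$ on $H^1_{e,w_\pm}(\mathbb{R}^n_\pm)$. Via the even-extension isomorphism, $\ell_+$ induces a bounded functional on the closed subspace of even-in-$x_n$ functions inside $H^1_{w_{+,e}}(\mathbb{R}^n)$; extend by Hahn--Banach to all of $H^1_{w_{+,e}}(\mathbb{R}^n)$, and use Theorem \ref{t:dual BMO cla} to produce a representative $\tilde b_+\in{\rm BMO}_{w_{+,e}}(\mathbb{R}^n)$. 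Replacing $\tilde b_+$ by its even part $\tfrac12(\tilde b_+(x)+\tilde b_+(\widetilde x))$ (which preserves the ${\rm BMO}_{w_{+,e}}$ norm, since $w_{+,e}$ is even, and reproduces the original action on even test functions because odd parts pair to zero with them) and restricting to $\mathbb{R}^n_+$ yields $b_+\in{\rm BMO}_{e,w_+}(\mathbb{R}^n_+)$ representing $\ell_+$. Construct $b_-$ analogously and glue: $b:=b_+\unit_{\mathbb{R}^n_+}+b_-\unit_{\mathbb{R}^n_-}$. By Proposition \ref{p-coinc Dz-BMO-2}, $b\in{\rm BMO}_{\Delta_N,w}(\mathbb{R}^n)$ with $\|b\|_{{\rm BMO}_{\Delta_N,w}}\ls\|\ell\|$, and $\ell=\langle b,\cdot\rangle$ on a dense subclass (for instance, bounded, compactly supported functions vanishing in a neighbourhood of $\{x_n=0\}$, which clearly split as $f=f_+\unit_{\mathbb{R}^n_+}+f_-\unit_{\mathbb{R}^n_-}$).

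The main obstacle will be making the even-extension machinery precise in the Hahn--Banach step: verifying that the map $f_+\mapsto f_{+,e}$ is an isomorphism of $H^1_{e,w_+}(\mathbb{R}^n_+)$ onto a closed (hence topologically complemented) subspace of $H^1_{w_{+,e}}(\mathbb{R}^n)$, confirming that symmetrising the ${\rm BMO}_{w_{+,e}}$ representative neither disturbs its pairing with even $H^1$ functions nor enlarges its norm beyond a fixed constant, and identifying a dense class of functions in $H^1_{\Delta_N,w}(\mathbb{R}^n)$ on which both sides of $\ell=\langle b,\cdot\rangle$ are unambiguously defined, so that $b$ is uniquely determined by $\ell$ and the norm comparison closes.
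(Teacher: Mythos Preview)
Your proposal is correct and follows essentially the paper's route: both reduce via Propositions \ref{p-coinc Dz H1} and \ref{p-coinc Dz-BMO-2} to the half-space pieces, identify these with the even-in-$x_n$ subspaces of $H^1_{w_{\pm,e}}(\mathbb{R}^n)$ and ${\rm BMO}_{w_{\pm,e}}(\mathbb{R}^n)$ via Definitions \ref{d-H1 even} and \ref{d-BMO even}, and then invoke the classical duality of Theorem \ref{t:dual BMO cla}. The paper compresses your Hahn--Banach/symmetrisation step into a citation of \cite{Ca}*{p.22} (duals of direct sums) together with the observation that the even-extension maps are isometries; the obstacles you flag are real but routine (e.g., closedness of the even subspace follows because the reflection $x\mapsto\widetilde x$ induces an isometry of $H^1_{w_{+,e}}(\mathbb{R}^n)$, as $w_{+,e}$ is even).
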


\begin{proof}
By \cite{Ca}*{p.22}, Propositions \ref{p-coinc Dz H1} and \ref{p-coinc Dz-BMO-2}
it suffices to show that  $[H^1_{e,w_+}(\mathbb{R}^n_+)]'={\rm BMO}_{e,w_+}(\mathbb{R}^n_+)$
 and $[H^1_{e,w_-}(\mathbb{R}^n_-)]'={\rm BMO}_{e,w_-}(\mathbb{R}^n_-).$
 Moreover, from Definition \ref{d-H1 even}, we get that
the mapping from  $H^1_{e,w_+}(\mathbb{R}^n_+)$($H^1_{e,w_-}(\mathbb{R}^n_-)$ resp.)
to $H^1_{w_{+,e}}(\R^n)$ ($H^1_{w_{-,e}}(\R^n)$) is an isometry homomorphism, and so is
the map from ${\rm BMO}_{e,w_+}(\mathbb{R}^n_+)$ (${\rm BMO}_{e,w_-}(\mathbb{R}^n_-)$ resp.)
to ${\rm BMO}_{w_{e, +}}(\mathbb{R}^n)$ (${\rm BMO}_{w_{e, -}}(\mathbb{R}^n)$ resp.)
by Definition \ref{d-BMO even}. Observe that
$[H^1_{w_{+,e}}(\R^n)]'={\rm BMO}_{w_{+,e}}(\mathbb{R}^n)$ and
$[H^1_{w_{-,e}}(\R^n)]'={\rm BMO}_{w_{-,e}}(\mathbb{R}^n)$ with equivalent norms, respectively.
Then the proof of Theorem \ref{t-dual} is completed.
\end{proof}

\section{Proofs of Theorems \ref{t:upp-low Neum Lap ha-sp commu} and \ref{t:upp-low Neum Lap wh-sp commu}}
\setcounter{equation}{0}
\label{s:upp-low Neum Lap commu}

\begin{proof}[\bf Proof of Theorem \ref{t:upp-low Neum Lap wh-sp commu}]
Suppose $1<p<\infty$ and $\mu,\lambda \in  A^p_{\Delta_N}(\mathbb R^n)$. Set $\nu = \mu^{1\over p} \lambda^{-{1\over p}}$.

{\bf Proof of the upper bound}:

Suppose $b\in  {\rm BMO}_{\Delta_N,\nu}({\mathbb R}^n)$.  We claim that
for each $l\in\{1, 2, \ldots, n\}$,
there is a positive constant $C$, depending only on $n,p,\mu,\lambda$ such that
\begin{align}\label{upper}
\| [b,R_{N, l}] \|_{ L^{p}_{\mu}(\mathbb R^n)\to  L^{p}_{\lambda}(\mathbb R^n)}\leq C \|b\|_{ {\rm BMO}_{\Delta_N,\nu}({\mathbb R}^n)}.
\end{align}

To begin with, for $b\in  {\rm BMO}_{\Delta_N,\nu}({\mathbb R}^n)$,  according to Proposition \ref{p-coinc Dz-BMO-2}, we have that
$b_{+,e}\in {\rm BMO}_{\nu_{+,e}}(\mathbb{R}^n)$ and $b_{-,e}\in \rm BMO_{\nu_{-,e}}(\mathbb{R}^n)$, and moreover,
$$ \|b\|_{{\rm BMO}_{\Delta_N,\nu}(\mathbb{R}^n)} \approx  \| b_{+,e}\|_{\rm BMO_{\nu_{+,e}}(\mathbb{R}^n)} + \|b_{-,e}\|_{\rm BMO_{\nu_{-,e}}(\mathbb{R}^n)}. $$

For every $f\in L^p_\mu(\mathbb{R}^n)$, we have
\begin{align*}
\| [b,R_{N,l}](f) \|_{   L^{p}_{\lambda}(\mathbb R^n)}^p &= \int _{\mathbb{R}^n_+}\big| [b,R_{N,l}](f)(x)\big|^p \,\lambda(x)dx+ \int _{\mathbb{R}^n_-} \big|[b,R_{N,l}](f)(x)\big|^p \,\lambda(x)dx\\
&=: I+II.
\end{align*}
For the term $I$, note that when $x\in\mathbb{R}^n_+$, we have  that $\lambda(x) = \lambda_{+,e}(x)$ and that
\begin{align*}
[b,R_{N,l}](f)(x)&= b(x)R_{N,l}(f)(x) - R_{N,l}(bf)(x)\\
&= b_{+,e}(x) R_l(f_{+,e})(x) - R_l(b_{+,e}f_{+,e})(x)= [b_{+,e},R_l](f_{+,e})(x),
\end{align*}
which implies that
\begin{align*}
I 
\leq  \int _{\mathbb{R}^n} \big|[b_{+,e},R_l](f_{+,e})(x)\big|^p \,\lambda_{+,e}(x)dx\leq C   \| b_{+,e} \|_{\rm BMO_{\nu_{+,e}}(\mathbb{R}^n)}^p   \| f_{+,e} \|_{L^p_{\mu_{+,e}}(\mathbb{R}^n)}^p,
\end{align*}
where $R_l$ is the classical $l$-th Riesz transform ${\partial\over\partial x_l} \Delta^{-{1\over2}}$,
and for the last estimate we use the result \cite{HLW}*{Theorem 1.1}.  Similarly we can obtain that
\begin{align*}
II
&\leq C    \| b_{-,e} \|_{\rm BMO_{\nu_{-,e}}(\mathbb{R}^n)}^p   \| f_{-,e} \|_{L^p_{\mu_{-,e}}(\mathbb{R}^n)}^p.
\end{align*}
Combining the estimates for $I$ and $II$ above, we obtain that
\begin{align*}
\| [b,R_{N,l}](f) \|_{   L^{p}_{\lambda}(\mathbb R^n)}^p &\leq C   \| b_{+,e} \|_{\rm BMO_{\nu_{+,e}}(\mathbb{R}^n)}^p   \| f_{+,e} \|_{L^p_{\mu_{+,e}}(\mathbb{R}^n)}^p+ C \| b_{-,e} \|_{\rm BMO_{\nu_{-,e}}(\mathbb{R}^n)}^p   \| f_{-,e} \|_{L^p_{\mu_{-,e}}(\mathbb{R}^n)}^p\\
&\leq  C\| b \|_{{\rm BMO}_{\Delta_N,\nu}(\mathbb{R}^n)}^p  \Big( \| f_{+,e} \|_{L^p_{\mu_{+,e}}(\mathbb{R}^n)}^p+ \| f_{-,e} \|_{L^p_{\mu_{-,e}}(\mathbb{R}^n)}^p\Big)\\
&\leq  C\| b \|_{{\rm BMO}_{\Delta_N,\nu}(\mathbb{R}^n)}^p  \| f \|_{L^p_{\mu}(\mathbb{R}^n)}^p,
\end{align*}
which yields that
\eqref{upper} holds.

\medskip

{\bf Proof of the lower bound}:

Suppose $1<p<\infty$ and $\mu,\lambda \in  A^p_{\Delta_N}(\mathbb R^n)$.
Then from the definition of $A^p_{\Delta_N}(\R^n)$, we also obtain that $\mu_{+,e}$, $\mu_{-,e}$, $\lambda_{+,e}$ and $\lambda_{-,e}$ are in $A^p(\R^n)$ and we have
$$ [\mu]_{A^p_{\Delta_N}(\mathbb R^n)} = [\mu_{+,e}]_{A^p(\R^n)} + [\mu_{-,e}]_{A^p(\R^n)} $$
and
$$ [\lambda]_{A^p_{\Delta_N}(\mathbb R^n)} = [\lambda_{+,e}]_{A^p(\R^n)} + [\lambda_{-,e}]_{A^p(\R^n)}. $$
We now set $\nu = \mu^{1\over p} \lambda^{-{1\over p}}$ and hence we have $\nu_{+,e} = \mu_{+,e}^{1\over p} \lambda_{+,e}^{-{1\over p}}$ and $\nu_{-,e} := \mu_{-,e}^{1\over p} \lambda_{-,e}^{-{1\over p}}$. Then, from the property of $A^p(\R^n)$ as mentioned in Section 3, we see that both $\nu_{+,e} $ and $\nu_{-,e} $ are in $A^2(\R^n)$. This again, implies that $\nu$ itself is in $A^2_{\Delta_N}(\mathbb R^n)$.

Suppose
$b\in L^1_{\loc}(\mathbb R^n)$.
Suppose that for $l=1,\ldots,n$,  there is a positive constant $C_l$, depending only on $n,p,\mu,\lambda$ such that
\begin{align}\label{upper1}
\| [b,R_{N,l}] \|_{ L^{p}_{\mu}(\mathbb R^n)\to  L^{p}_{\lambda}(\mathbb R^n)}= C_l<\infty.
\end{align}
We will show that $b$ is in ${\rm BMO}_{\Delta_N,\nu}(\mathbb{R}^n)$ with the norm satisfying
\begin{align}\label{lower}
\|b\|_{{\rm BMO}_{\Delta_N,\nu}(\mathbb{R}^n)}\ls C_l.
\end{align}
To this end, we first claim that for any $f\in L^p_{\mu_+, e}(\R^n)$,
\begin{align}\label{e:bdd commt even ext}
\|[b_{ +, e}, R_l](f)\|_{ L^{p}_{\lambda_{+,e}}(\mathbb R^n)}\ls C_l\|f\|_{L^p_{\mu_{+,e}}(\R^n)}.
\end{align}
In fact, for every $f \in L^p_\mu(\R^n)$, from \eqref{upper1}, we have
\begin{align}\label{upper2}
\| [b,R_{N, l}] (f)\|_{ L^{p}_{\lambda}(\mathbb R^n)}\leq C_l\|f\|_{L^p_\mu(\R^n)}.
\end{align}
In particular, we consider $f_+$ and $f_-$
which are the restrictions of $f$ onto $\R^n_+$ and $\R^n_-$, respectively.
It is clear that both $f_+$ and $f_-$ are in $L^p_\mu(\R^n)$. Now by substituting $f_+$ into \eqref{upper2} to replace $f$, we obtain that
\begin{align}\label{upper3}
\| [b,R_{N,l}] (f_+)\|_{ L^{p}_{\lambda}(\mathbb R^n)}\leq C_l\|f_+\|_{L^p_\mu(\R^n)}.
\end{align}
Next, note that from the definition of the commutator
\begin{align}\label{eeeee r}
[b,R_{N,l}](f_+)(x)&= b(x)R_{N,l}(f_+)(x) - R_{N,l}(bf_+)(x),
\end{align}
and then from the kernel condition on $R_{N,l}$,  we see that the variable $x$ in \eqref{eeeee r} above is actually restricted on $\R^n_+$, which further implies that
\begin{align*}
[b,R_{N,l}](f_+)(x)
&= b_{+,e}(x) R_l(f_{+,e})(x) - R_l(b_{+,e}f_{+,e})(x)= [b_{+,e},R_l](f_{+,e})(x).
\end{align*}
This, together with \eqref{upper3},
 implies that
\begin{align}\label{upper4}
\| [b_{+,e},R_l] (f_{+,e})\|_{ L^{p}_{\lambda_+}(\mathbb R^n_+)}\leq C_l\|f_{+,e}\|_{L^p_{\mu_+}(\R^n_+)}, \quad l=1,\ldots,n.
\end{align}
Moreover, for every $x=(x_1,\ldots,x_{n-1},x_n)\in \R^n_+$, we use $\widetilde x=(x_1,\ldots,x_{n-1}, -x_n)$ to denote the reflection of $x$
in $\R^n_-$.
Then we obtain that for $l=1,\ldots,n-1$, $$[b_{+,e},R_l] (f_{+,e})(\widetilde x) = [b_{+,e},R_l] (f_{+,e})(x),$$ and that for $l=n$,
 $$[b_{+,e},R_n] (f_{+,e})(\widetilde x) = -[b_{+,e},R_n] (f_{+,e})(x).$$
Combining these two equalities, the upper bound in \eqref{upper4} and the fact that $\|f_{+,e}\|_{L^p_{\mu_+}(\R^n_+)} \approx \|f_{+,e}\|_{L^p_{\mu_{+,e}}(\R^n)}$, we obtain that
\begin{align}\label{upper5}
\| [b_{+,e},R_l] (f_{+,e})\|_{ L^{p}_{\lambda_{+,e}}(\mathbb R^n)}\ls C_l\|f_{+,e}\|_{L^p_{\mu_{+,e}}(\R^n)}, \quad l=1,\ldots,n.
\end{align}

Moreover, let $f_{+,o}$ be the odd extension of $f_+$ to $\mathbb R^n$. We see that
for $l=1,\ldots,n-1$, $$[b_{+,e},R_l] (f_{+,o})(\widetilde x) = -[b_{+,e},R_l] (f_{+,o})(x),$$ and that for $l=n$,
 $$[b_{+,e},R_n] (f_{+,o})(\widetilde x) = [b_{+,e},R_n] (f_{+,o})(x).$$
This implies that
\begin{align*}
\| [b_{+,e},R_l] (f_{+,o})\|_{ L^{p}_{\lambda_{+,e}}(\mathbb R^n)}\ls C_l\|f_{+,o}\|_{L^p_{\mu_{+,e}}(\R^n)}, \quad l=1,\ldots,n.
\end{align*}
From this and \eqref{upper5}, we conclude that the claim \eqref{e:bdd commt even ext} holds true.

We borrow an idea from \cite{J}.  Observe that for any $l\in\{1,\,\cdots,\,n\}$, $1/R_l\in
C^\infty(\mathbb R^n\setminus\{0\})$. Therefore, there exist $z_0\in
\mathbb R^n\setminus\{0\}$ and $\delta\in(0, \infty)$ such that
$1/{R_l(z)}$ is expressed as an absolutely convergent Fourier series in the ball
$B(z_0, \sqrt n\delta)$ (see, for example, \cite[Theorem 3.2.16]{Gra}). That is, there exist $\{\sigma_k\}_{k\in\mathbb N}\subset\mathbb R^n$
and numbers $\{a_k\}_{k\in\mathbb N}$ with $\sum_{k=1}^\infty|a_k|<\infty$ such that for
all $z\in B(z_0, \sqrt n\delta)$,
$1/{R_l(z)}=\sum_{k=1}^\infty a_ke^{i\sigma_k\cdot  z}$.
Let $z_1:=\delta^{-1}z_0$. If $|z-z_1|<\sqrt n$, then we have that
$|\delta z-z_0|<\sqrt n\delta$ and
\begin{equation}\label{e3.4}
\frac1{R_l(z)}=\frac{\delta^{-n}}{R_l(\delta z)}=\delta^{-n}\sum_{k=1}^\infty
a_ke^{i\sigma_k \cdot (\delta z)}.
\end{equation}

For any cube $Q:= Q(x_0, r)\subset\mathbb R^n$,
 let $y_0:= x_0-2rz_1$ and $Q':= Q(y_0, r)$.
Then we obtain that for all $x\in Q$ and $y\in Q'$,
\begin{equation}\label{e3.5}
\left|\frac{x-y}{2r}-z_1\right|\le \frac{|x-x_0|}{2r}+\frac{|y-y_0|}{2r}<\sqrt n.
\end{equation}
 From this,
\eqref{e3.4}, \eqref{e3.5}, the H\"older inequality and \eqref{e:bdd commt even ext}, we then deduce that
\begin{eqnarray*}
&&\dint_Q\left|b_{+,e}(x)-\langle b_{+,e}\rangle_{Q'}\right|\,dx\\
&&\quad=\dint_{\mathbb R^n}\left[b_{+,e}(x)-\langle b_{+,e}\rangle_{Q'}\right]sgn\left(b-\langle b_{+,e}\rangle_{Q'}\right)\unit_Q(x)\,dx\\
&&\quad=\frac1{|Q'|}\dint_{\mathbb R^n}\dint_{\mathbb R^n}[b_{+,e}(x)-b_{+,e}(y)]sgn(b_{+,e}(x)-\langle b_{+,e}\rangle_{Q'})
\unit_Q(x)\unit_{Q'}(y)\\
&&\quad\quad\times
\frac{(2r)^nR_l(x-y)}{R_l(\frac{x-y}{2r})}\,dy\,dx\\
&&\quad\ls\dint_{\mathbb R^n}\dint_{\mathbb R^n}[b_{+,e}(x)-b_{+,e}(y)]sgn(b_{+,e}(x)-\langle b_{+,e}\rangle_{Q'})
\unit_Q(x)\unit_{Q'}(y)\\
&&\quad\quad\times R_l(x, y)
\sum_{k=1}^\infty a_ke^{i\frac{\delta\sigma_k}{2r}\cdot  (x-y)}\,dy\,dx\\
&&\quad\ls \sum_{k=1}^\infty|a_k|\dint_{\mathbb R^n}\left|\left[b, R_l\right]\left(\unit_{Q'}
e^{-i\frac{\delta\sigma_k}{2r} }\right)(x)\right|\unit_Q(x)\,dx\\
&&\quad\ls \sum_{k=1}^\infty|a_k|\left\|\left[b, R_l\right]\left(\unit_{Q'}
e^{-i\frac{\delta\sigma_k}{2r} }\right)\right\|_{L^{p}_{\lambda_{+,e}}(\mathbb R^n)}
\left[\lambda_{+,e}^{1-p'}(Q)\right]^{1/{p'}}\\
&&\quad\ls C_l\left[\mu_{+,e}(Q)\right]^{1/{p}}\left[\lambda_{+,e}^{1-p'}(Q)\right]^{1/{p'}},
\end{eqnarray*}
which together with \eqref{Bloom weight2} implies that
\begin{equation}\label{e3.7}
\dint_Q|b_{+,e}(x)-\langle b_{+,e}\rangle_Q|\,dx\ls C_l\nu_{+, e}(Q).
\end{equation}
This shows $b_{+,e}\in {\rm BMO}_{\nu_{+, e}}(\R^n)$.

Symmetrically we obtain that
$b_{-,e}$  is in ${\rm BMO}_{\nu_{-,e}}(\R^n)$  with $\|b_{-,e}\|_{{\rm BMO}_{\nu_{-,e}}(\R^n)}\ls C_l$.  Combining these two facts and Proposition \ref{p-coinc Dz-BMO-2}, we obtain that
$b$  is in ${\rm BMO}_{\Delta_N, \nu}(\R^n)$ with
$$\|b\|_{{\rm BMO}_{\Delta_N,\nu}(\mathbb{R}^n)}\ls C_l,
$$
i.e., the claim \eqref{lower} holds.
\end{proof}

\begin{remark}\label{remark Th1.1}
Since $\Delta_{N_+}$ is the part of the $\Delta_N$ on the upper half space $\R^n_+$,
based on the proof of the upper bound and lower bound above for Theorem \ref{t:upp-low Neum Lap wh-sp commu}, we can obtain the proof of Theorem \ref{t:upp-low Neum Lap ha-sp commu} just by tracking the
estimates for the even extension of the positive parts of the functions and the weights in the proof of Theorem \ref{t:upp-low Neum Lap wh-sp commu} above, i.e., tracking the process involving those $b_{+,e}$, $f_{+,e}$,
$\mu_{+,e}$, $\lambda_{+,e}$.
\end{remark}


\section{Dirichlet Laplacian and proof of Theorem \ref{t:Dirich commuta counterex}}
\label{s:Dirichlet}

By $\Delta_{D_+}$ we denote the Dirichlet Laplacian
on $\mathbb R^n_+$. The Dirichlet Laplacian is a positive definite self-adjoint
operator. By the spectral theorem one can define the semigroup generated
by this operator $\{\exp (-t\Delta_{D_+}): t \ge 0\}$.
By $p_{t,\Delta_{D_+}}(x, y)$ we denote the heat kernels corresponding
to the semigroup generated by $\Delta_{D_+}$.
From the reflection method (see \cite{S}*{(9), page 59 in Section 3.1}), we get
\begin{eqnarray*}
p_{t,\Delta_{D_+}}(x,y)  = \frac{1}{(4\pi t)^{\frac{n}{2}}}e^{-\frac{|x'-y'|^2}{4t}}\left( e^{-\frac{|x_n-y_n|^2}{4t}}-e^{-\frac{|x_n+y_n|^2}{4t}} \right), \ \ x,y\in \mathbb{R}^n_+.
\end{eqnarray*}

Denote by $R_{D,j}(x,y)$ the kernel of the $j$-th Riesz transform $\frac{\partial}{\partial x_j} \Delta_{D_+}^{-\frac{1}{2}}$ of $\Delta_{D_+}$ associated with the Dirichlet Laplacian. Then analogous to
Proposition \ref{p:RieszKernel}, we have
the following conclusions whose proofs are similar and omitted.

\begin{prop}\label{p:RieszKernel-Dirich}
 Then for $1\leq j\leq n-1$ and for $x,y\in\mathbb{R}^n_+$ we have:
$$
R_{D,j}(x,y)= - C_n \bigg( {x_j-y_j\over |x-y|^{n+1}} - \frac{x_j-y_j}{(|x'-y'|^2+|x_n+y_n|^2)^{\frac{n+1}{2}}}\bigg);
$$
and for $j=n$ we have:
$$
R_{D,n}(x,y)= - C_n \bigg( {x_j-y_j\over |x-y|^{n+1}} -  \frac{x_n+y_n}{(|x'-y'|^2+|x_n+y_n|^2)^{\frac{n+1}{2}}}\bigg),
$$
where $C_n=\frac{\Gamma\big(\frac{n+1}{2}\big)}{(\pi )^{\frac{n+1}{2}}}  $.
\end{prop}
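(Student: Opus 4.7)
The plan is to obtain these formulas directly from the reflection representation of the Dirichlet heat kernel together with the classical Riesz transform kernel on $\R^n$, mirroring the approach used for the Neumann analog in Proposition \ref{p:RieszKernel}. First, I would invoke the subordination identity
\[
\Delta_{D_+}^{-1/2}f(x)=\frac{1}{\sqrt{\pi}}\int_0^{\infty} e^{-t\Delta_{D_+}}f(x)\,\frac{dt}{\sqrt{t}},\qquad x\in\R^n_+,
\]
valid (say) on $L^2$ by the spectral theorem, so that the kernel of $\Delta_{D_+}^{-1/2}$ equals
\[
K_{D_+}(x,y)=\frac{1}{\sqrt{\pi}}\int_0^{\infty}p_{t,\Delta_{D_+}}(x,y)\,\frac{dt}{\sqrt{t}}.
\]
Writing $\widetilde{y}=(y',-y_n)$ and using the explicit reflection formula for $p_{t,\Delta_{D_+}}$ recalled at the start of Section \ref{s:Dirichlet}, we have $p_{t,\Delta_{D_+}}(x,y)=p_t(x,y)-p_t(x,\widetilde y)$ for $x,y\in\R^n_+$, where $p_t$ is the heat kernel on $\R^n$ in \eqref{heat ker classical}. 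Therefore
\[
K_{D_+}(x,y)=K(x,y)-K(x,\widetilde{y}),
\]
where $K$ is the kernel of the classical Riesz potential $\Delta^{-1/2}$ on $\R^n$; a standard computation gives $K(x,y)=c_n|x-y|^{-(n-1)}$ for an appropriate dimensional constant $c_n$.

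Next I would differentiate in $x_j$ under the integral sign. Since $x\in\R^n_+$ and $y\in\R^n_+$, both $|x-y|$ and $|x-\widetilde y|$ stay positive in a neighbourhood of any fixed $(x,y)$ with $x\ne y$, so the differentiation is justified by dominated convergence on the pointwise level of $p_t$. Recalling the classical formula
\[
\frac{\partial K}{\partial x_j}(x,z)=-C_n\,\frac{x_j-z_j}{|x-z|^{n+1}},\qquad 1\le j\le n,\quad C_n=\frac{\Gamma\bigl(\tfrac{n+1}{2}\bigr)}{\pi^{(n+1)/2}},
\]
(which is exactly the kernel of the $j$-th classical Riesz transform, as used in \cite{HLW} and Section \ref{s:BloomBMOHardy}), we obtain
\[
R_{D,j}(x,y)=\frac{\partial K_{D_+}}{\partial x_j}(x,y)
=-C_n\,\frac{x_j-y_j}{|x-y|^{n+1}}+C_n\,\frac{x_j-\widetilde y_j}{|x-\widetilde y|^{n+1}}.
\]
Now I would simply insert $\widetilde y=(y',-y_n)$ and $|x-\widetilde y|^2=|x'-y'|^2+|x_n+y_n|^2$. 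For $1\le j\le n-1$ one has $\widetilde y_j=y_j$, which yields the first displayed formula in the proposition; for $j=n$ one has $\widetilde y_n=-y_n$, so $x_n-\widetilde y_n=x_n+y_n$, which yields the second displayed formula.

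The only non-routine point is the justification of differentiating the $t$-integral termwise near the diagonal, but this is standard: one fixes $x\ne y$ in $\R^n_+$, notes that the integrand and its $x_j$-derivative decay exponentially as $t\to 0$ (with rate controlled by $\min\{|x-y|^2,|x-\widetilde y|^2\}>0$) and polynomially as $t\to\infty$, and then applies a standard dominated-convergence argument; alternatively, one can read off the same conclusion by first verifying the formula for test functions $f\in C_c^\infty(\R^n_+)$ and then identifying the resulting distribution with the pointwise kernel away from the diagonal. This completes the sketch.
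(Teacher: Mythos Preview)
Your proposal is correct and follows precisely the approach the paper intends: the paper omits the proof, stating only that it is ``similar'' to that of Proposition \ref{p:RieszKernel} for the Neumann case, and your argument---subordination to write $\Delta_{D_+}^{-1/2}$ via the heat semigroup, the reflection identity $p_{t,\Delta_{D_+}}(x,y)=p_t(x,y)-p_t(x,\widetilde y)$, and then differentiating the classical Riesz potential kernel---is exactly that parallel computation.
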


From Proposition \ref{p:RieszKernel-Dirich}, we deduce that for each $j\in\{1, 2,\,\cdots, n\}$,
$R_{D, j}$ is a Calder\'on--Zygmund kernel which satisfies the following conditions:
for any $x, y\in \mathbb  R^n_+$ with $x\not=y$,
\begin{align*}
|R_{D, j}(x,y)|\leq C  \frac{1}{|x-y|^{n}},
\end{align*}
and for $x,x_0,y\in\mathbb{R}^n_+$   with $|x-x_0|  \leq \frac{1}{2} |x-y|$,
\begin{align*}
|R_{D, j}(x,y)-R_{D, j}(x_0,y)|+|R_{D, j}(y,x)-R_{D, j}(y,x_0)|\leq C\frac{|x-x_0|}{|x-y|^{n+1}}.
\end{align*}
Moreover, by the fact that for any $x\in\mathbb R^n_+$ and $t>0$,
$\nabla \Delta_{D_+}^{-\frac12}f(x)=\nabla \Delta^{-\frac12}f_o(x)$ (see \cite{DDSY}*{(2.6)}), we have that
for any $f\in L^2(\mathbb R_+^n)$,
$$\|\nabla \Delta_{D_+}^{-\frac12}f\|_{L^2(\mathbb R_+^n)}\le \|\nabla \Delta^{-\frac12}f_o\|_{L^2(\mathbb R^n)}
\ls\|f_o\|_{L^2(\mathbb R^n)}\sim \|f\|_{L^2(\mathbb R_+^n)},$$
where $f_o$ is the odd extension of $f$ to $\mathbb R^n$. This implies that $\nabla \Delta_{D_+}^{-\frac12}$
is bounded on $L^2(\mathbb R_+^n)$.

Now let ${\rm BMO}(\mathbb R^n_+)$ be the classical BMO space on $\mathbb R^n_+$, that is,
$${\rm BMO}(\mathbb R^n_+):=\{f\in L^1_{loc}(\mathbb R^n_+):\,\, \|f\|_{{\rm BMO}(\mathbb R^n_+)}<\infty\},$$
where
$$\|f\|_{{\rm BMO}(\mathbb{R}^n_+)}:=\sup_{Q\subset \mathbb{R}^n_+}\frac1{|Q|}\int_Q\left|f(x)-\langle f\rangle_Q\right|dx;$$
see \cite{CW}.  Then we have that for any $j\in\{1, 2, \cdots, n\}$ and $b\in {\rm BMO}(\mathbb R^n_+)$,
the commutator $[b, R_{D, j}]$ is bounded on $L^2(\mathbb R^n_+)$, see for example the upper bound of the commutators showed  in \cite{CRW}, i.e., for every $b\in  {\rm BMO}(\mathbb R^n_+)$,
\begin{align}\label{section8 upper bound}
\| [b,\nabla \Delta_{D_+}^{-{1\over2}}] : L^{p}(\mathbb{R}^n_+)\to  L^{p}(\mathbb{R}^n_+)\| \leq C\|b\|_{{\rm BMO}(\mathbb R^n_+)}.
\end{align}

Now let ${\rm BMO}_{\Delta_{D_+}}(\mathbb R^n_+)$ be the BMO space associated with $\Delta_{D_+}$
on $\mathbb R^n_+$, which was introduced in \cite{DDSY}. Recall that
${\rm BMO}_{\Delta_{D_+}}(\mathbb R^n_+)$ coincides with ${\rm BMO}_{o}(\mathbb R^n_+)$, where
${\rm BMO}_{o}(\mathbb R^n_+)$ is the set of functions on $\mathbb R_+^n$ whose odd extension
belong to ${\rm BMO}(\mathbb R^n)$, and is a proper subspace of ${\rm BMO}(\mathbb R^n_+)$.
Then we know that
$${\rm BMO}_{\Delta_{D_+}}(\mathbb R^n_+) \subsetneq {\rm BMO}(\mathbb R^n_+),$$
(see for example  \cite{DDSY} or
\cite{DLWY}).

This strict inclusion, together with \eqref{section8 upper bound}, shows that there exists a function $b_0\in {\rm BMO}(\mathbb R^n_+)\setminus {\rm BMO}_{\Delta_{D_+}}(\mathbb R^n_+)$
such that     
$$\| [b_0,\nabla \Delta_{D_+}^{-{1\over2}}] : L^{p}(\mathbb{R}^n_+)\to  L^{p}(\mathbb{R}^n_+)\| \leq C_{b_0}<\infty$$
with $C_{b_0}:=\|b_0\|_{{\rm BMO}(\mathbb R^n_+)}$.
Thus,
Theorem \ref{t:Dirich commuta counterex} holds.

\bigskip
{\bf Acknowledgement:} X.T. Duong  and J. Li are supported by ARC DP 160100153. I. Holmes is supported by the National Science Foundation under Award \# 1606270.
B.D. Wick is supported in part by National Science Foundation grant DMS \#1560955. D. Yang  is supported by the NNSF of China (Grant No. 11571289).

\begin{bibdiv}
\begin{biblist}

\bib{AS}{article}{
author={Aguilera, N.},
author={Segovia, C.},
title={Weighted norm inequalities relating the  $g_{\lambda}^*$ and the area functions},
journal={Studia Math.},
 volume={61},
   date={1977},
   pages={293--303}}

\bib{B}{article}{
   author={Bloom, S.},
   title={A commutator theorem and weighted BMO},
   journal={Trans. Amer. Math. Soc.},
   volume={292},
   date={1985},
   number={1},
   pages={103--122}
}

\bib{Ca}{book}{
author={Carothers, N. L.},
title={A short course on Banach space theory},
   series={London Mathematical Society Student Texts},
publisher={Cambridge University Press, Cambridge},
 volume={64},
date={2005},
pages={xii+184}
}

\bib{CRW}{article}{
   author={Coifman, R. R.},
   author={Rochberg, R.},
   author={Weiss, G.},
   title={Factorization theorems for Hardy spaces in several variables},
   journal={Ann. of Math. (2)},
   volume={103},
   date={1976},
   number={3},
   pages={611--635}
}

\bib{CW}{article}{
   author={Coifman, R. R.},
   author={Weiss, G.},
   title={Extensions of Hardy spaces and their use in analysis},
   journal={Bull. Amer. Math. Soc.},
   volume={83},
   date={1977},
   number={4},
   pages={569-645}
}

\bib{D}{article}{
   author={Daubechies, I.},
   title={Orthonormal bases of compactly supported wavelets},
   journal={Comm. Pure Appl. Math.},
   volume={41},
   date={1988},
   number={7},
   pages={909--996}
}

\bib{DDSY}{article}{
   author={Deng, D. G.},
   author={Duong, X. T.},
   author={Sikora, A.},
   author={Yan, L. X.},
   title={Comparison of the classical BMO with the BMO spaces associated
   with operators and applications},
   journal={Rev. Mat. Iberoam.},
   volume={24},
   date={2008},
   number={1},
   pages={267--296}
}


\bib{DLWY}{article}{
   author={Duong, X. T.},
   author={Li, J.},
   author={Wick, B. D.},
   author={Yang, D.},
   title={Hardy space via factorization, and BMO
space via commutators in the Bessel setting},
   journal={Indiana Univ. Math. J.},
   volume={to appear}
}

\bib{DY}{article}
{
   author={Duong, X. T.},
   author={Yan, L. X.},
   title={Hardy spaces of spaces of homogeneous type},
   journal={Proc. Amer. Math. Soc.},
   volume={131},
   date={2003},
   pages={3181--3189}
}

%
%

\bib{Ga}{article}{
   author={Garc\'ia-Cuerva, J.},
   title={Weighted $H^{p}$ spaces},
   journal={Dissertationes Math. (Rozprawy Mat.)},
   volume={162},
   date={1979},
   pages={63pp}
}

\bib{GM}{article}{
   author={Garc\'ia-Cuerva, J.},
   author={Martell, J. M.},
   title={Wavelet characterization of weighted spaces},
   journal={J. Geom. Anal.},
   volume={11},
   date={2001},
   number={2},
   pages={241--264}
}

\bib{GR}{book}{
   author={Garc\'ia-Cuerva, J.},
   author={Rubio de Francia, J. L.},
   title={Weighted norm inequalities and related topics},
   series={North-Holland Mathematics Studies, 116},
publisher={North-Holland Publishing Co., Amsterdam},
date={1985},
pages={x+604}
}

\bib{GY}{article}
{

   author={Gong, R.},
   author={Yan, L.},
   title={Littlewood--Paley and spectral multipliers on weighted $L^p$ spaces},
   journal={J. Geom. Anal.},
   volume={24},
   number={2},
   date={2014},
   pages={25--49}
}

\bib{Gra}{book}{
   author={Grafakos, L.},
   title={Modern Fourier analysis},
   series={Graduate Texts in Mathematics},
   volume={250},
   edition={3},
   publisher={Springer, New York},
   date={2014},
   pages={xvi+624}
}

\bib{HLL10}{article}{
   author={Han, Y.},
   author={Li, J.},
   author={Lu, G},
   title={Duality of multiparameter Hardy spaces $H^p$ on spaces of homogeneous type},
   journal={Ann. Scuola Norm. Sup. Pisa Cl. Sci},
   volume={IX},
   number={5},
   date={2010},
   pages={645--685}
}

\bib{HLW}{article}{
   author={Holmes, I.},
   author={Lacey, M.},
   author={Wick, B. D.},
   title={Commutators in the two-weight setting},
   journal={Math. Ann.},
   volume={367},
   number={1-2},
   date={2017},
   pages={51--80}
}

\bib{J}{article}{
   author={Janson, S.},
   title={Mean oscillation and commutators of singular integral operators},
   journal={Ark Mat.},
   volume={16},
   number={},
   date={978},
   pages={263-270}
}

\bib{L}{article}{
   author={Lacey, M. T.},
      title={An elementary proof of the $A^2$ Bound},
   journal={Israel Journal of Mathematics},
   volume={217},
   date={2017},
   pages={181--195}
}

\bib{LN}{article}{
   author={Lerner, A. K.},
   author={Nazarov, F.},
   title={Intuitive dyadic calculus: the basics},
   journal={ arXiv:1508.05639v1},
  pages={1--53}
}

\bib{LPW}{article}{
   author={Li, J.},
   author={Pipher, J.},
   author={Ward, L. A.},
   title={Dyadic structure theorems for multiparameter function spaces},
   journal={Rev. Math. Iberoam.},
   volume={31},
   number={3},
   date={2015},
   pages={767--797}
}

\bib{LW}{article}{
   author={Li, J.},
   author={Wick, B. D.},
   title={Characterizations of $H^1_{\Delta_N}(\mathbb{R}^n)$ and ${\rm BMO}_{\Delta_N}(\mathbb{R}^n)$ via Weak Factorizations and Commutators},
   journal={J. Funct. Anal.},
    volume={272},
   number={12},
   date={2017},
   pages={5384--5416}
  }



\bib{MW}{article}{
	author={Muckenhoupt, B.},
	author={Wheeden, R. L.},
	title={Norm inequalities for Littlewood--Paley function $g_\lambda^*$},
	journal={Trans. Amer. Math. Soc.},
	volume={191},
	date={1974},
    pages={95--111}}

\bib{MW76}{article}{
	author={Muckenhoupt, B.},
	author={Wheeden, R. L.},
	title={Weighted bounded mean oscillation and the Hilbert transform},
	journal={Studia Math.},
	volume={54},
	date={1975/76},
    pages={221--237}}

\bib{MW78}{article}{
	author={Muckenhoupt, B.},
	author={Wheeden, R. L.},
	title={On the dual of weighted $H^1$ of the half-space},
	journal={Studia Math.},
	volume={63},
	date={1978},
    pages={57--79}}

\bib{Ne}{article}{
	author={Nehari, Z.},
	title={On bounded bilinear forms},
	journal={Ann. of Math.},
	volume={65},
	date={1957},
    pages={153--162}}

%

   \bib{S}{book}{
author={Strauss, W. A.},
title={Partial differential equation: An introduction},
publisher={John Wiley \& Sons, Inc., New York},
date={2008},
pages={xiv+454}
}

\bib{Wu}{article}{
   author={Wu, S.},
   title={A wavelet characterization for weighted Hardy spaces},
   journal={Rev. Mat. Iberoamericana},
   volume={8},
   date={1992},
   number={3},
   pages={329-349}
}

%
\end{biblist}
\end{bibdiv}

\end{document}